\documentclass[12pt,letterpaper,twoside,english]{amsart}
\usepackage[T1]{fontenc}
\usepackage[latin9]{inputenc}
\usepackage{babel}
\usepackage{mathrsfs}
\usepackage{amsthm}
\usepackage{amstext}
\usepackage{amssymb}
\usepackage{graphicx}
\usepackage{setspace}
\usepackage{esint}
\usepackage{comment}
\usepackage{enumerate}
\usepackage{amscd,color}
\usepackage[unicode=true,pdfusetitle,
 bookmarks=true,bookmarksnumbered=false,bookmarksopen=false,
 breaklinks=false,pdfborder={0 0 1},backref=false,colorlinks=false]
 {hyperref}
\makeatletter
\pdfpageheight\paperheight
\pdfpagewidth\paperwidth


\numberwithin{equation}{section}
\numberwithin{figure}{section}
\theoremstyle{plain}
\newtheorem{thm}{\protect\theoremname}[section]
  \theoremstyle{definition}
  \newtheorem{example}[thm]{\protect\examplename}
  \theoremstyle{remark}
  \newtheorem{rem}[thm]{\protect\remarkname}
  \theoremstyle{plain}
  \newtheorem{prop}[thm]{\protect\propositionname}
  \theoremstyle{plain}
  \newtheorem{cor}[thm]{\protect\corollaryname}
  \theoremstyle{definition}
  \newtheorem{defn}[thm]{\protect\definitionname}
  \theoremstyle{plain}

\newcommand{\h}{\hbox}
\newcommand{\q}{\quad}

\newcommand{\ms}{\par\medskip}
\newcommand{\sk}{\par\smallskip}
\newcommand{\bsn}{\par\bigskip\noindent}
\newcommand{\msn}{\par\medskip\noindent}
\newcommand{\skn}{\par\smallskip\noindent}
\newcommand{\ges}{\geqslant}
\newcommand{\les}{\leqslant}
\newcommand{\mopl}{\hbox{$\bigoplus$}}

\newcommand{\Dc}{{\mathcal D}}
\newcommand{\Hc}{{\mathcal H}}
\newcommand{\Ic}{{\mathcal I}}
\newcommand{\Oc}{{\mathcal O}}
\newcommand{\Xt}{{}\,\widetilde{\!X}{}}
\newcommand{\Db}{{\mathbb D}}
\newcommand{\Rb}{{\mathbf R}}
\newcommand{\Qb}{{\mathbb Q}}
\newcommand{\Cb}{{\mathbb C}}
\newcommand{\Zb}{{\mathbb Z}}
\newcommand{\Ks}{K^{\ssb}}
\newcommand{\Kps}{K'{}^{\ssb}}
\newcommand{\dd}{\partial}
\newcommand{\Gr}{{\rm Gr}}
\newcommand{\De}{\Delta}
\newcommand{\bl}{\bigl}
\newcommand{\br}{\bigr}
\newcommand{\ssb}{\raise.15ex\h{${\scriptscriptstyle\bullet}$}}
\newcommand{\ssc}{\,\raise.15ex\hbox{${\scriptstyle\circ}$}\,}
\newcommand{\into}{\hookrightarrow}
\newcommand{\simto}{\,\,\rlap{\hskip1.3mm\raise1.4mm\hbox{$\sim$}}\hbox{$\longrightarrow$}\,\,}

\usepackage{enumitem}
\usepackage{amsmath}
\usepackage{amsfonts}
\usepackage{amssymb}
\usepackage[all]{xy}
\usepackage{array}

\def\RR{\mathbb{R}}
\def\CC{\mathbb{C}}
\def\QQ{\mathbb{Q}}
\def\PP{\mathbb{P}}
\def\ZZ{\mathbb{Z}}
\def\HH{\mathbb{H}}
\def\VV{\mathbb{V}}
\def\LL{\mathbb{L}}
\def\DD{\mathbb{D}}
\def\str{\mathsf{S}}
\def\aff{\mathbb{A}}

\def\fx{\mathfrak{X}}
\def\cE{\mathcal{E}}
\def\cx{\mathcal{X}}
\def\cs{\mathcal{S}}
\def\ch{\mathcal{H}}
\def\ck{\mathcal{K}^{\bullet}}
\def\cy{\mathcal{Y}}
\def\cl{\mathcal{L}}
\def\ci{\mathcal{I}}

\def\cp{\mathcal{P}^{\bullet}}
\def\cn{\mathcal{N}}
\def\co{\mathcal{O}}
\def\cv{\mathcal{V}}

\def\fsm{f^{\mathrm{s}}}
\def\ppsi{{}^p \psi}
\def\pphi{{}^p \phi}
\def\Xs{X_{\sigma}}

\def\image{\mathrm{im}}
\def\an{\mathrm{an}}
\def\nc{\mathrm{nc}}
\def\gr{\mathrm{Gr}}
\def\sing{\mathrm{sing}}
\def\var{\widetilde{\mathtt{var}}}
\def\can{\mathtt{can}}
\def\gy{\mathtt{gy}}
\def\sp{\mathtt{sp}}

\def\pha{\mathrm{ph}}
\def\van{\mathrm{van}}
\def\lm{\mathrm{lim}}

\def\IH{\mathrm{IH}}
\def\IC{\mathrm{IC}^{\bullet}}
\def\sm{\mathrm{sm}}
\def\cone{\mathrm{Cone}}

\def\pr{{}^p R}
\def\pl{{}^p \cl}
\def\ph{{}^p \ch}
\def\ve{\varepsilon}
\def\eb{\epsilon^{\bullet}}
\def\ox{\co_X}
\def\xb{X^{\bullet}}
\def\ob{\co_{\xb}}
\def\uw{\underline{w}}

\def\uo{\underline{0}}

\newcommand{\vinj}{\rotatebox[origin=c]{-90}{$\hookrightarrow$}}

\makeatother

  \providecommand{\corollaryname}{Corollary}
  \providecommand{\definitionname}{Definition}
  \providecommand{\examplename}{Example}
  \providecommand{\lemmaname}{Lemma}
  \providecommand{\propositionname}{Proposition}
  \providecommand{\remarkname}{Remark}
\providecommand{\theoremname}{Theorem}

\makeatletter
\let\@wraptoccontribs\wraptoccontribs
\makeatother

\begin{document}

\title[Hodge Theory of Degenerations, I]{Hodge Theory of Degenerations, (I):\\Consequences of the Decomposition Theorem}

\author{Matt Kerr}
\address[Matt Kerr]{Washington University in St. Louis, Department of Mathematics and Statistics, St. Louis, MO 63130-4899, USA}
\email{matkerr@math.wustl.edu}

 \author{Radu Laza}
 \address[Radu Laza]{Stony Brook University, Department of Mathematics, Stony Brook, NY 11794-3651,USA}
\email{rlaza@math.stonybrook.edu}

\contrib[with an appendix by]{Morihiko Saito}
 \address[Morihiko Saito]{RIMS Kyoto University, Kyoto 606-8502, Japan}
  \email{msaito@kurims.kyoto-u.ac.jp} 
\bibliographystyle{amsalpha}

\thanks{During its writing, MK was supported by NSF Grant DMS-1361147; RL was supported by NSF Grant DMS-1802128 and DMS-1254812.}
\maketitle

\begin{abstract}
We use the Decomposition Theorem to derive several generalizations of the Clemens--Schmid sequence, relating asymptotic Hodge theory of a degeneration to the mixed Hodge theory of its singular fiber(s).
\end{abstract}

\section{Introduction}

This paper initiates a series of articles on the relationship between the asymptotic Hodge theory of a degeneration and the mixed Hodge theory of its singular fiber(s), motivated by the study of compactifications of moduli spaces.  In this first installment, we concentrate on what may be derived from the Decomposition Theorem (DT) of \cite{BBD} in the setting of mixed Hodge modules \cite{toh}, including several variants of the 
Clemens--Schmid (C-S) exact sequence \cite{Cl} (also see \cite{GN}, \cite{MR1602375}) and basic results on the vanishing cohomology.  In a forthcoming sequel \cite{KL}, referred to henceforth as \textbf{Part II}, we investigate the vanishing cohomology in further detail, and give several applications to geometric degenerations.

The period map is the main tool for studying the moduli spaces of abelian varieties, $K3$ surfaces \cite{Sh80,Lo03}, and related objects such as hyper-K\"ahler manifolds \cite{Huy11,KLSV} and cubic 3-folds and 4-folds \cite{Voi86,LS07,La10,ACT11}.  What these ``classical'' examples have in common is a ``strong global Torelli'' property, to the effect that the period map embeds each moduli space as an open subset of a locally symmetric variety.  This facilitates the comparison, or even an explicit birational correspondence, between Hodge-theoretic (i.e. toroidal \cite{AMRT} and Baily-Borel) compactifications and geometric ones (such as KSBA or GIT compactifications); see for example the series of papers \cite{La16,log1,log2,log3}.  A program led by Griffiths, with contributions of many people (including the authors), aims to extend the use of period maps in studying moduli to the ``non-classical'' case, especially surfaces of general type with $p_g\ge 2$ and Calabi-Yau threefolds, with the premise that this strong connection between compactifications should remain. 
In particular, the geometric boundary (suitably blown up) carries variations of limiting mixed Hodge structures (LMHS) on its strata, which in principle yield period maps to Hodge-theoretic boundary components\footnote{Suitable compactifications are known for locally symmetric subvarieties of Hodge-theoretic classifying spaces, and are expected to exist (in horizontal directions) in general.}.  The challenge is thus to compute these LMHS, and their associated monodromies, as well as possible from the geometry of the (singular) fibers over the geometric boundary.

There are two main parts to this challenge. {\it The first is to compute the MHS on the singular fibers and relate this to the invariant cycles in the LMHS.}  For the ideal topological set-up, that of a semistable degeneration $\cx \overset{f}{\to} \Delta$ over a disk (centered at the origin) with singular fiber $X_0$, a piece of the Clemens--Schmid sequence says that 
\begin{equation}\label{eqI1}
H^k_{X_0}(\cx)\overset{\mu}{\to} H^k(X_0) \overset{\sp}{\to} H^k_{\text{inv}}(X_t) \to 0
\end{equation}
is an exact sequence of MHS, with $\image({\mu})$ pure of weight $k$ (and level $\leq k-2$). While this is a very strong statement, the natural degenerations occurring (say) in
GIT (Geometric Invariant Theory \cite{GIT}) or KSBA (Koll\'ar--Shepherd-Barron--Alexeev \cite{ksb}, \cite{alexeev}) compactifications are rarely semistable, and difficult to put in this form via semistable reduction.  Indeed, the philosophy of the minimal model program (MMP) is that, for sufficiently ``mild'' singularities on $\cx$ and $X_0$, we need \emph{not} carry out semistable reduction, as illustrated by papers from \cite{shahins,Sh80,Sh81} to \cite{La10,KLSV}.  

In accord with this principle, we have largely focused this paper on various generalizations of Clemens--Schmid, starting with the simple observation (cf. Theorem \ref{thMain} and \eqref{eq1.5b}) that \eqref{eqI1} remains valid for smooth $\cx$ and projective $f$, regardless of unipotency of monodromy or singularities of $X_0$. Specifically, we have:
\begin{thm}\label{thMain0}
Let $f \colon \cx\to \Delta$ be a flat projective family of varieties over the disk, which is the restriction of an algebraic family over a curve, such that $f$ smooth over $\Delta^*$.  If $\cx$ is smooth, then we have exact sequences of MHS 
\begin{multline}
0\to H^{k-2}_{\lm}(X_t)_{T}(-1)\overset{\sp^{\vee}}{\to}H_{2n-k+2}(X_0)(-n-1)\\ \overset{\gy}{\to}H^k(X_0)\overset{\sp}{\to}H^k_{\lm}(X_t)^{T}\to 0
\end{multline}
for every $k\in\ZZ$, where the outer terms are the coinvariants resp. invariants of the monodromy operator $T$ on the LMHS.
\end{thm}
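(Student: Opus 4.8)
The plan is to realize the four-term exact sequence as an avatar of the Decomposition Theorem applied to the map $\bar f\colon \cx \to C$ (the algebraic family over the curve compactifying $f$), or rather to $f\colon \cx \to \Delta$, via the mixed Hodge module formalism. The key object is the nearby cycle complex $\psi_f\QQ^H_\cx$ and its relation to $i^*\QQ^H_\cx$ and $i^!\QQ^H_\cx$, where $i\colon X_0 \into \cx$ is the inclusion. Concretely, I would first record the two standard triangles in $D^b\mathrm{MHM}(X_0)$: the specialization triangle $i^*\QQ^H_\cx[-1] \to \psi_{f,1}\QQ^H_\cx \xrightarrow{N} \psi_{f,1}\QQ^H_\cx(-1) \to$ on the unipotent part (together with the analogous vanishing-cycle triangle $i^*\QQ^H_\cx \to \psi_f\QQ^H_\cx \xrightarrow{\can} \phi_f\QQ^H_\cx \to$), and dually the triangle computing $i^!\QQ^H_\cx$. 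Taking cohomology $H^k(X_0, -)$ and using that $H^k(X_0,\psi_f\QQ^H_\cx) = H^k_\lm(X_t)$ as a MHS, the unipotent nearby part contributes the invariants $H^k_\lm(X_t)^T = \ker(N)$ and, after the shift by the Tate twist, the coinvariants $H^{k-2}_\lm(X_t)_T(-1) = \mathrm{coker}(N)$.

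The second ingredient is the identification of the two middle terms with (co)homology of $X_0$. Since $\cx$ is smooth of dimension $n+1$, we have $\QQ^H_\cx = \QQ^H_{\cx}[0]$ with $\mathbb{D}\QQ^H_\cx \cong \QQ^H_\cx(n+1)[2n+2]$, so $i^!\QQ^H_\cx \cong \mathbb{D}\, i^*\mathbb{D}\QQ^H_\cx$ gives, on taking hypercohomology, the duality $H^k(X_0, i^!\QQ^H_\cx) \cong H^{BM}_{2n+2-k}(X_0)^\vee$-type statement; unwinding, $H^k_{X_0}(\cx) = H^k(X_0, i^!\QQ^H_\cx)$ is dual to $H_{2n-k+2}(X_0)(-n-1)$ up to the Tate twist recorded in the theorem, and the map $\gy$ is the composite $i^!\QQ^H_\cx \to i^*\QQ^H_\cx$ inducing $H_{2n-k+2}(X_0)(-n-1) \to H^k(X_0)$ (the ``Gysin'' or wrong-way map), which is the Poincaré–Lefschetz dual of $\mu$ in \eqref{eqI1}. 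The map $\sp$ is the specialization $H^k(X_0) = H^k(X_0, i^*\QQ^H_\cx) \to H^k(X_0,\psi_{f,1}\QQ^H_\cx) = H^k_\lm(X_t)$ landing in $\ker(N) = H^k_\lm(X_t)^T$, and $\sp^\vee$ is its dual twisted appropriately.

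The exactness then follows by splicing: the long exact sequence of the specialization triangle reads $\cdots \to H^k_{X_0}(\cx) \to H^k(X_0) \xrightarrow{\sp} H^k_\lm(X_t) \xrightarrow{N} H^k_\lm(X_t)(-1) \to H^{k+1}_{X_0}(\cx)\to\cdots$, i.e. the local-cohomology-to-nearby-cycles sequence, and the claimed four-term sequence is obtained by breaking this up at $\ker N$ and $\mathrm{coker}\,N$ and using Poincaré–Lefschetz duality on $\cx$ (valid precisely because $\cx$ is smooth) to rewrite $H^k_{X_0}(\cx)$ — or rather the image of the map out of it — in homological terms. That these identifications and splittings are morphisms of MHS is automatic from the MHM formalism. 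The main obstacle I anticipate is bookkeeping: pinning down the precise Tate twists and the identification of the connecting map with $N$ (versus a sign or a factor of $2\pi i$), and checking that the map one gets from the abstract triangle genuinely agrees with the geometric Gysin map $\gy$ — this is where one must be careful about the self-duality isomorphism for $\QQ^H_\cx$ and its compatibility with $\psi_f$ and $\phi_f$ under Verdier duality (the identity $\psi_f \mathbb{D} \cong \mathbb{D}\psi_f$, $\phi_f\mathbb{D}\cong \mathbb{D}\phi_f$ up to the standard twist). Once those compatibilities are in place the rest is formal.
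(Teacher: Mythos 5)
Your proposal contains a genuine gap that is, in a sense, the entire content of the theorem.

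You write down the sequence
\[
\cdots \to H^k_{X_0}(\cx) \to H^k(X_0) \overset{\sp}{\to} H^k_\lm(X_t) \overset{N}{\to} H^k_\lm(X_t)(-1) \to H^{k+1}_{X_0}(\cx)\to\cdots
\]
and call it ``the long exact sequence of the specialization triangle.'' But there is no distinguished triangle giving this sequence, and in particular the claimed triangle $i^*\QQ^H_\cx[-1] \to \psi_{f,1}\QQ^H_\cx \overset{N}{\to} \psi_{f,1}\QQ^H_\cx(-1) \to$ is not a distinguished triangle in $D^b\mathrm{MHM}(X_0)$ (notice that the period of your sequence is four, whereas a genuine LES has period three). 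What the octahedral axiom gives you (see Remark \ref{rem5n}(i) and $\S$A.4 of the Appendix) is two separate distinguished triangles — one joining $\ppsi_1\mathsf{K} \overset{N}{\to} \ppsi_1\mathsf{K}(-1)$ to the cone $C(N)$, and one joining $i_0^!\mathsf{K} \to i_0^*\mathsf{K}$ to $C(N)$. Splicing their two LES's produces your sequence, but its exactness (at the $\ppsi_1$ and $i_0^*$, $i_0^!$ terms) is \emph{not} automatic. Indeed, exactness at $H^k_\lm(X_t)$ is precisely the local invariant cycle theorem, and Saito's Theorem A.4 shows that exactness of this spliced sequence is \emph{equivalent} to the decomposition theorem for $Rf_*\QQ_\cx$. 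So ``breaking it up at $\ker N$ and $\mathrm{coker}\,N$'' presupposes exactly what one must prove. You announce at the outset that you will ``realize the sequence as an avatar of the DT,'' but the DT never actually enters your argument — you reduce everything to a formal triangle that does not exist, and the hard part evaporates.

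The paper's proof avoids this by taking the DT as the genuine input: by \eqref{eq1.3a} the perverse direct images decompose as $\pr^j f_*\ck \simeq \jmath_*\VV^j(\ck)[1]\oplus\bigoplus_\sigma \imath^\sigma_* W^j_\sigma(\ck)$, and feeding this into $\imath^*_\sigma$, $\imath^!_\sigma$, $\psi_t$, $\pphi_t$ yields the explicit formulas \eqref{eq1.3c}--\eqref{eq1.3h} for all four terms as direct sums, with the maps $\sp$, $\sp^\vee$, $\gy$ identified as the evident inclusions and projections of summands. The four-term exactness \eqref{eq1.4b} then follows by inspection, and specializing $\ck=\QQ_\cx[n+1]$ (valid since $\cx$ is smooth, so $\IC_\cx=\QQ_\cx[n+1]$) gives \eqref{eq1.4j}, which is the statement. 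Your identifications of the middle terms via duality (using $\mathbb{D}\QQ^H_\cx\cong\QQ^H_\cx(n+1)[2n+2]$ and $\psi\mathbb{D}\cong\mathbb{D}\psi$) are fine and match the paper's use of $H_{\Xs}^k(\cx,\ck)$ versus Borel--Moore homology; what is missing is the step where the DT is actually used to produce the decomposition on which exactness rests.

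Your proposal could be repaired along the lines of the Appendix: prove the local invariant cycle theorem (using e.g. the hard Lefschetz part of the DT, or a weight argument), then use self-duality of $\QQ_\cx[n+1]$ together with the duality of $\can$ and $\mathtt{Var}$ to deduce the direct sum decomposition $\pphi_1 = \mathrm{im}(\can)\oplus\ker(\mathtt{Var})$, and only then split the spliced sequence. But as written, the proof is incomplete at exactly the point where the theorem has content.
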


We are interested especially in versions of Clemens--Schmid for $1$-parameter families arising in the study of KSBA compactifications.  In this direction, we obtain the following result (cf. Thm. \ref{prop1.8b} and Cor. \ref{cor1.8a}), which in particular gives that the frontier Hodge numbers (i.e. $h^{p,q}$ with $p\cdot q=0$) are preserved for such degenerations. Weaker versions  of our result (cf. \cite{shahins}, \cite{steenbrink}) proved to be very useful for the study of degenerations of $K3$ surfaces and hyper-K\"ahler manifolds (e.g. \cite{Sh80}, \cite{log2,KLSV}). 

\begin{thm}\label{thm-intro1} Let $f:\cx\to \Delta$ be as in the first sentence of Theorem \ref{thMain0} (in particular, $\cx\setminus X_0$ is smooth). Suppose that $\cx$ is normal and $\QQ$-Gorenstein, and that the special fiber $X_0$ is reduced.
\begin{itemize}
\item[(i)] If $X_{0}$ is semi-log-canonical (slc), then 
$$\gr_F^0 H^k(X_0)\cong \gr_F^0 H_{\lm}^k (X_t)\cong \gr_F^0 H^k_{\lm} (X_t)^{T^{\text{ss}}} \;\;\;(\forall k),$$ 
where $T=T^{\text{un}}T^{\text{ss}}$ is the Jordan decomposition of the monodromy into unipotent and (finite) semisimple parts.
\item[(ii)] If $X_{0}$ is log-terminal, then additionally $(\forall k)$
$$W_{k-1}\gr_{F}^{0}H_{\lm}^{k}(X_{t})=\{0\}.$$
\end{itemize}
\end{thm}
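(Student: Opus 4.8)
The plan is to reduce both assertions to equalities of the single Hodge graded piece $\gr_F^{0}$ of various mixed Hodge structures, exploiting that $\gr_F^{0}$, and more generally $\gr_F^{0}\gr^{W}_{m}$, is an exact functor on $\MHS$ (a morphism of mixed Hodge structures being strict for both $F$ and $W$). The first step is to establish the analogue of the Clemens--Schmid sequence \eqref{eqI1} valid when $\cx$ is merely normal and $\QQ$-Gorenstein rather than smooth; this is where the Decomposition Theorem and the $\QQ$-Gorenstein hypothesis are really needed --- to show that the Gysin/local-cohomology term $H^{k}_{X_{0}}(\cx)$ maps into $H^{k}(X_{0})$ with image pure of weight $k$ and of level $\le k-2$ (equivalently, with vanishing $\gr_F^{0}$) --- and I expect this to be the main obstacle. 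Granting it, $\sp\colon H^{k}(X_{0})\to H^{k}_{\lm}(X_{t})$ is a morphism of $\MHS$ whose image lies in the monodromy invariants $H^{k}_{\lm}(X_{t})^{T}$ and whose kernel has vanishing $\gr_F^{0}$; hence $\gr_F^{0}\sp$ is \emph{injective}, with image contained in $\gr_F^{0}H^{k}_{\lm}(X_{t})^{T}$.

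Next I would assemble three dimension counts. Since $X_{0}$ is reduced and slc it is Du Bois (Koll\'ar--Kov\'acs), so the zeroth term of its Du Bois complex is quasi-isomorphic to $\co_{X_{0}}$ and therefore $\gr_F^{0}H^{k}(X_{0})\cong\mathbb{H}^{k}(X_{0},\co_{X_{0}})$; in particular $\dim_{\CC}\gr_F^{0}H^{k}(X_{0})=\dim_{\CC}H^{k}(X_{0},\co_{X_{0}})$. On the general fibre, classical Hodge theory together with the invariance of the Hodge numbers of the limit mixed Hodge structure gives $\dim_{\CC}\gr_F^{0}H^{k}_{\lm}(X_{t})=\dim_{\CC}\gr_F^{0}H^{k}(X_{t})=\dim_{\CC}H^{k}(X_{t},\co_{X_{t}})$. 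Finally, upper semicontinuity of cohomology along the flat projective family $f$ gives $\dim_{\CC}H^{k}(X_{0},\co_{X_{0}})\ge\dim_{\CC}H^{k}(X_{t},\co_{X_{t}})$ for $t$ near $0$, so that $\dim_{\CC}\gr_F^{0}H^{k}(X_{0})\ge\dim_{\CC}\gr_F^{0}H^{k}_{\lm}(X_{t})$.

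For part (i), combining the above with the obvious inclusions of the $\gr_F^{0}$ of sub-mixed-Hodge-structures yields a chain of injections of $\CC$-vector spaces
$$\gr_F^{0}H^{k}(X_{0})\ \overset{\gr_F^{0}\sp}{\hookrightarrow}\ \gr_F^{0}H^{k}_{\lm}(X_{t})^{T}\ \hookrightarrow\ \gr_F^{0}H^{k}_{\lm}(X_{t})^{T^{\text{ss}}}\ \hookrightarrow\ \gr_F^{0}H^{k}_{\lm}(X_{t}),$$
whose composite $\gr_F^{0}\sp$ is injective while its source has dimension at least that of its target; hence all three arrows are isomorphisms, which is precisely the conclusion of (i). In particular $\gr_F^{0}\sp$ identifies $\gr_F^{0}H^{k}(X_{0})$ with $\gr_F^{0}H^{k}_{\lm}(X_{t})$, an isomorphism compatible with the induced weight filtrations since $\sp$ is a morphism of $\MHS$.

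For part (ii), $X_{0}$ is now log-terminal, hence has rational singularities (Koll\'ar--Mori), hence is Du Bois (Kov\'acs) and satisfies $\co_{X_{0}}\xrightarrow{\sim}R\pi_{*}\co_{\widetilde{X}_{0}}$ for a projective resolution $\pi\colon\widetilde{X}_{0}\to X_{0}$. By functoriality of the Du Bois complex, $\gr_F^{0}$ of the pullback $\pi^{*}\colon H^{k}(X_{0})\to H^{k}(\widetilde{X}_{0})$ is identified with the map $\mathbb{H}^{k}(X_{0},\co_{X_{0}})\to\mathbb{H}^{k}(\widetilde{X}_{0},\co_{\widetilde{X}_{0}})$ induced by $\co_{X_{0}}\to R\pi_{*}\co_{\widetilde{X}_{0}}$, hence is an \emph{isomorphism}. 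But $\pi^{*}$ is a morphism of $\MHS$ into the pure weight-$k$ Hodge structure $H^{k}(\widetilde{X}_{0})$, so it annihilates $W_{k-1}H^{k}(X_{0})$; by exactness of $\gr_F^{0}$ this forces $W_{k-1}\gr_F^{0}H^{k}(X_{0})=\gr_F^{0}W_{k-1}H^{k}(X_{0})\subseteq\ker(\gr_F^{0}\pi^{*})=0$. Transporting along the weight-compatible isomorphism $\gr_F^{0}H^{k}(X_{0})\xrightarrow{\sim}\gr_F^{0}H^{k}_{\lm}(X_{t})$ from part (i) gives $W_{k-1}\gr_F^{0}H^{k}_{\lm}(X_{t})=0$, as required.
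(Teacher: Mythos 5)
There is a genuine gap, and you locate it yourself but do not close it: the injectivity of $\gr_F^0\sp$ when $\cx$ is singular. Your plan is to show $\ker(\sp)$ has vanishing $\gr_F^0$ by arguing the image of $\gy\colon H^k_{X_0}(\cx)\to H^k(X_0)$ is pure of weight $k$ and level $\le k-2$. But this has two problems. First, the identification $\ker(\sp)=\image(\gy)$ is itself part of the Clemens--Schmid exact sequence, which the paper shows can \emph{fail} when $\cx$ is singular (Example \ref{ex1.6a}); in general one only has $\ker(\sp)=\image(\delta\colon H^{k-1}_{\van}\to H^k(X_0))\supseteq\image(\gy)$ from the (always-exact) vanishing-cycle long exact sequence \eqref{eq1.4a}, so controlling $\image(\gy)$ does not control $\ker(\sp)$. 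Second, even for $\image(\gy)$, the weight argument of Proposition \ref{prop1.4}(i) rests on the identification $H^k_{X_0}(\cx)\cong H_{2n-k+2}(X_0)(-n-1)$, i.e.\ on Poincar\'e--Lefschetz duality for the smooth $\cx$; for $\cx$ merely normal $\QQ$-Gorenstein this is unavailable. You flag these points as ``the main obstacle'' and then proceed ``granting it'' — so the proof is incomplete precisely where the hypotheses on $\cx$ enter.

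The paper's route around this is entirely different and does not involve any singular-$\cx$ Clemens--Schmid. The key input you are missing is Schwede's inversion of adjunction: a du~Bois Cartier divisor with smooth complement forces the \emph{ambient} $\cx$ (and every finite base change of it) to have rational, hence du~Bois, singularities. This, combined with a Mayer--Vietoris comparison between $H^k(X_0)$ and $H^k(Y_0)$ for a log resolution $\cy\to\cx$, yields $\gr_F^0 H^k(X_0)\cong\gr_F^0 H^k(Y_0)$ (Prop.\ \ref{prop1.8a}); Clemens--Schmid is then only ever invoked for the \emph{smooth} total space $\cy$. The second isomorphism in (i) (removing the $T^{\text{ss}}$) is obtained by running the same argument after a cyclic base change that makes the monodromy unipotent (Thm.\ \ref{prop1.8b}), which is also where the reduced-special-fiber hypothesis is used. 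Your dimension-counting idea via $\dim H^k(\co_{X_0})\ge\dim H^k(\co_{X_t})$ is attractive and would indeed close the gap once injectivity is known, but as written it is not independent of the hard step. Your argument for part (ii) — rational singularities give $\gr_F^0\pi^*$ an isomorphism into the pure $H^k(\tilde X_0)$, forcing $W_{k-1}\gr_F^0=0$ — is essentially the paper's own proof of Cor.\ \ref{cor1.8a}(ii) and is correct, conditional on part (i).
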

\begin{rem}
Under the assumption of $X_0$ having du Bois singularities, the first isomorphism of item (i) above is due to Steenbrink \cite{steenbrink}. Koll\'ar--Kov\'acs \cite{KK} (see also \cite[\S6.2]{kol2}) proved that slc singularities are du Bois, recovering our version above.
\end{rem}

Under stronger assumptions (especially smoothness for the total space $\cx$), we are able to go deeper into the Hodge filtration (Theorem \ref{cor1.8b}). We expect that this result (which to our knowledge is new) will play an important role in the study of degenerations of Calabi-Yau threefolds with canonical singularities, and respectively surfaces of general type with $p_g=2$. (Several related questions about these two geometric cases are currently under investigation by the authors and their collaborators under the aegis of Griffiths's program.) 
\begin{thm}[{= Theorem \ref{cor1.8b}}]\label{thm-intro2}  Let $f:\cx\to \Delta$ be as in Theorem \ref{thm-intro1}. Assume that the total space $\cx$ is smooth and the special fiber $X_0$ is log-terminal (or more generally,
has rational singularities). Then 
$$\gr_{F}^{1}H^{k}(X_{0})\cong\gr_{F}^{1}(H_{\lm}^{k}(X_{t}))^{T^{\text{ss}}}.$$\end{thm}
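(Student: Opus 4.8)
The plan is to deduce Theorem~\ref{thm-intro2} from the generalized Clemens--Schmid sequence of Theorem~\ref{thMain0} together with the Hodge-theoretic input coming from the smoothness of $\cx$ and the rationality of the singularities of $X_0$. The key point is that both sides of the claimed isomorphism sit inside the Clemens--Schmid sequence, and the comparison map between them is precisely $\sp$ (or rather $\gr_F^1$ of it, restricted to the semisimple-invariant part). So the strategy has two halves: (a) show $\gr_F^1 \sp$ is \emph{surjective} onto $\gr_F^1 (H_\lm^k(X_t))^{T^{\mathrm{ss}}}$, and (b) show it is \emph{injective} on $\gr_F^1 H^k(X_0)$. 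For (a) one uses the right-exactness of Clemens--Schmid and the fact that, after passing to the semisimple-invariant part, one reduces to the unipotent-monodromy situation where $H^k_\lm(X_t)^T$ is a quotient of $H^k(X_0)$; one then needs that $\gr_F^1$ remains right-exact there, i.e. that the connecting map $\gy$ from $H_{2n-k+2}(X_0)(-n-1)$ lands in $F^{\le 1}$ trivially in $\gr_F^1$ — equivalently that $\image(\gy)$ is concentrated in low Hodge degree, which is where $\cx$ smooth enters (the image of $\mu$/$\gy$ is pure of weight $k$ and level $\le k-2$, forcing $\gr_F^1$ of it to vanish for the relevant $k$, or more precisely to not interfere).

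**The role of smoothness and rational singularities.** For the injectivity half (b), the essential input is that $X_0$ has rational (a fortiori du Bois) singularities, so that $\gr_F^0$ and $\gr_F^1$ of $H^k(X_0)$ are controlled by $H^k(X_0,\co_{X_0})$ and $H^{k-1}(X_0,\Omega^1_{X_0})$ via the du Bois complex $\underline{\Omega}^{\ssb}_{X_0}$, with $\gr_F^p \cong \HH^{k-p}(X_0, \underline{\Omega}^p_{X_0})$; rationality gives $\underline{\Omega}^0_{X_0}\cong \co_{X_0}$, and more importantly feeds into identifying $\gr_F^1$ with the expected $\Omega^1$-cohomology. On the other side, smoothness of $\cx$ lets one realize $H^k_\lm(X_t)$ via the logarithmic de Rham complex (after a semistable reduction that is unramified on the smooth locus, or directly via the nearby cycles $\psi_f \QQ_\cx$ as a mixed Hodge module pushed from a smooth total space), so that $\gr_F^1 H^k_\lm(X_t)$ is computed by a piece of $\Omega^{\ssb}_{\cx}(\log X_0)$. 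The comparison of these two $\gr_F^1$'s is then a question about the map of complexes induced by $\co_{X_0}\hookrightarrow$ restriction, and here one wants to invoke the theory of mixed Hodge modules: $\sp\colon H^k(X_0)\to H^k_\lm(X_t)$ is the specialization morphism coming from the distinguished triangle relating $i^*$, $\psi_f$, and the variation, and its behavior on $\gr_F^p$ for small $p$ is governed by the corresponding statement for the underlying filtered $\Dc$-modules.

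**The main obstacle.** I expect the hard part to be pinning down \emph{injectivity} of $\gr_F^1\sp$ on $\gr_F^1 H^k(X_0)$ — equivalently, showing that the previous term in Clemens--Schmid, the image of $\gy\colon H_{2n-k+2}(X_0)(-n-1)\to H^k(X_0)$, contributes nothing in $\gr_F^1$. From Theorem~\ref{thMain0} this image is dual to $H^{k-2}_\lm(X_t)_T(-1)$, hence pure of weight $k$ and of Hodge type with $F^1/F^2$ coming from a Tate twist, so it is supported in Hodge degrees $\ge 2$ — but making this precise requires knowing that the Tate twist $(-1)$ genuinely shifts everything out of $\gr_F^1$, i.e. that $\gr_F^1 H^{k-2}_\lm(X_t)_T(-1)=\gr_F^0 H^{k-2}_\lm(X_t)_T$ and that \emph{this} is killed by the map into $H^k(X_0)$ after accounting for the semisimple part. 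This last vanishing is exactly where one needs the full strength of $\cx$ being smooth (not just normal $\QQ$-Gorenstein): it is what distinguishes Theorem~\ref{thm-intro2} from part~(i) of Theorem~\ref{thm-intro1}, where only $\gr_F^0$ is handled. The cleanest route is probably to argue entirely within the category of mixed Hodge modules on $\cx$: use $f$ projective and $\cx$ smooth so that $Rf_*\QQ_\cx[n]$ decomposes (DT), extract the relevant graded pieces of the Hodge filtration on the nearby-cycle and vanishing-cycle MHM, and identify the $\gr_F^1$ comparison with an isomorphism of coherent cohomology groups that holds because the $\co$- and $\Omega^1$-level du Bois complexes of $X_0$ agree with their counterparts for the smooth total space restricted to $X_0$ — a statement that fails at $\gr_F^p$ for $p\ge 2$ in general, consistent with the theorem being stated only for $p=0,1$.
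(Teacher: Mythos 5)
Your proposal correctly identifies the crux of the matter: after setting up the Clemens--Schmid sequence, everything comes down to showing that $\gr_F^1$ of $\ker(\sp)=\image(\gy)=H^k_{\pha}(X_0)$ vanishes, i.e.\ that the phantom cohomology has no $(1,k-1)$ Hodge component. Indeed the paper's own remark following the proof of Theorem~\ref{cor1.8b} reduces the theorem to precisely this assertion, via \eqref{eq1.8b} and the Clemens--Schmid sequence. However, your proposal does not actually establish this vanishing. Your suggestion that the Tate twist ``shifts everything out of $\gr_F^1$'' does not work: Proposition~\ref{prop1.4}(i) only gives that $H^k_{\pha}(X_0)$ is pure of weight $k$ and level $\leq k-2$, which kills $h^{0,k}$ and $h^{k,0}$ but allows $h^{1,k-1}\neq 0$. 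There is no purely weight-theoretic or Tate-twist reason for the $(1,k-1)$ part to vanish --- this is exactly why the theorem genuinely needs an input beyond Corollary~\ref{cor1.8a}(ii).

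The paper's proof supplies that missing input from a deep coherent result of Saito \cite[Thms.\ 0.4--0.6]{Sa4}: for $\cx$ smooth and $X_0$ with rational singularities, $F^n(\pphi_f\QQ_{\cx}[n+1])=\{0\}$ as a filtered $\Dc$-module. Combined with the self-duality $\DD\,\pphi_f^u\QQ_{\cx}[n+1]=\pphi_f^u\QQ_{\cx}[n+1](n+1)$ (which uses $\cx$ smooth), one gets the perfect pairing $H^j_{\van}(X_t)^{T^{\text{ss}}}\times H^{2n-j}_{\van}(X_t)^{T^{\text{ss}}}\to\QQ(-n-1)$ and deduces that $H^j_{\van}(X_t)^{T^{\text{ss}}}=F^2 H^j_{\van}(X_t)^{T^{\text{ss}}}$ for all $j$. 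Then taking $\gr_F^1$ of the $T^{\text{ss}}$-invariant vanishing cycle sequence \eqref{eqI2} immediately yields the isomorphism. Note that the paper works directly with the vanishing cycle sequence rather than Clemens--Schmid, which has the additional advantage of handling the general rational case cleanly: your route relies implicitly on $\gr_F^1 H^k_{\lm}(X_t)=\gr_F^1 H^k_{\lm}(X_t)^{T^{\text{un}}}$, which in the paper is established only for log-terminal $X_0$ (Corollary~\ref{cor1.8a}(ii)), whereas the vanishing-cycle argument covers all $X_0$ with rational singularities. In short, you have the right diagnosis and the right general domain (MHM, coherent/du~Bois data, strength of $\cx$ smooth), but the proof has a genuine gap: the key lemma --- Saito's $F^n$-vanishing for $\pphi_f$ under rational singularities, and its dualization to $F^2$-fullness of $H^*_{\van}(X_t)^{T^{\text{ss}}}$ --- is not identified or replaced by any working alternative.
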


\begin{rem}
The general philosophy of Theorems \ref{thm-intro1} and \ref{thm-intro2} is that the milder the singularities, the closer the relationship between the Hodge structure on the central fiber $X_0$ and the limit Hodge structure is.  In \textbf{Part II} of our paper, we will give some further versions based on the concept of $k$-log-canonicity of Mustata--Popa \cite{MP} (see also \cite{JKYS} for some more recent developments). In the opposite direction, one can ask what happens if $X_0$ is not log canonical. This leads to questions on the Hodge structure of the ``tail''  (e.g. see \cite{hassett} and \cite[Sect. 6]{log2}) occurring in a KSBA stable replacement. While some examples are discussed here, we revisit the topic in a more systematic way in \textbf{Part II}. 
\end{rem}

\begin{rem}
Versions of Theorems \ref{thm-intro1} and \ref{thm-intro2}  (under somewhat weaker, but less geometric assumptions) are the subject of a forthcoming paper joint with M. Saito \cite{KLS}.
\end{rem}

For singular total spaces, there are ``clean'' versions of Clemens--Schmid only for semisimple perverse sheaves \eqref{eq1.4b} (including intersection cohomology \eqref{eq1.4f}).  For us, the importance of semisimplicity with respect to the perverse t-structure was driven home by \cite{BC18}, and we explain in Example \ref{ex1.6a} how this typically fails for $\QQ_{\cx}[d_{\cx}]$ even when it is perverse.  So the versions for usual cohomology with $\cx$ singular are necessarily more partial, as seen in the context of base-change and log-resolutions \eqref{eq1.7d}, quotient singularities \eqref{eq1.7h}, and MMP-type singularities (results in $\S$\ref{S1.8}).  Finally, in Theorem \ref{thmS9} we arrive at an analogue of Clemens--Schmid for the simplest kinds of multiparameter degenerations (smooth total space, snc discriminant divisor), including for instance those termed semistable by \cite{AK}.

{\it The second main aspect to determining the LMHS of a 1-parameter degeneration (without applying semistable reduction) is to tease out of the geometry of $X_0$ those aspects which are invisible to $H^*(X_0)$}.  Here the main tool (for $\cx\to\Delta$) is the exact sequence
\begin{equation}\label{eqI2}
\cdots \to H^k(X_0) \to H^k_{\lm}(X_t) \to H^k_{\van}(X_t) \to H^{k+1}(X_0)\to\cdots
\end{equation}
where $H^k_{\van}(X_t)$ denotes $\HH^{k+d_{\cx}-1}$ of the vanishing cycle sheaf $\pphi_f\,\QQ_{\cx}[d_{\cx}]$ on $X_0$, promoted to a MHS by Saito's realization of $\pphi_f \QQ_{\cx}[d_{\cx}]$ as a mixed Hodge module (MHM) in \cite{mhm}.  We shall refer to \eqref{eqI2} as the \emph{vanishing cycle sequence}. 
Basic results on the \emph{vanishing cohomology} $H^k_{\van}(X_t)$ are proved in Propositions \ref{prop1.4} and \ref{prop1.5a} and Theorem \ref{prop1.5b} here; for instance, in the case of an isolated singularity, its underlying $\QQ$-vector space is the reduced cohomology of the Milnor fiber \cite{Mi}.  These are but a small taste of what will be the main topic in \textbf{Part II} of our study, in which tools such as mixed spectra and the motivic Milnor fiber are used to compute $H_{\van}^*$ for various singularities arising in GIT and MMP.

Of course, there is a vast literature on the subject of relating the cohomology and singularity theory of $X_0$ with the limit cohomology (e.g. \cite{Cl}, \cite{steenbrinkvan},  \cite{shahins}, \cite{kulikov}, \cite{toh}, \cite{dCM2}, \cite{DL}, \cite{KK}, \cite{DS}).  Our purpose in this series is to survey, adapt, and (where possible) improve this for degenerations that occur naturally in the geometric context.  Beyond relating geometric and Hodge-theoretic compactifications of moduli, we anticipate applications to the classification of singularities and KSBA (or semistable) replacements of singular fibers occurring in GIT, as well as to limits of normal functions in the general context of \cite{7K}.

\subsection*{Structure of the paper} In Sections \ref{sec-motivation} and \ref{sec-leray}, we start with a review of the Decomposition Theorem and make some preliminary considerations for our situation. The following three sections discuss the case of the Decomposition Theorem over a curve (with an eye towards one-parameter degenerations). First, in Section \ref{S1.3}, we introduce the vanishing and nearby cycles, and the {\it vanishing cycle triangle} relating them (see \eqref{eq1.3b}), followed by general forms of vanishing, limiting, and ``phantom'' cohomology.  These preliminaries allow us to begin Section \ref{SDT2} with a very general form of the Clemens--Schmid exact sequence, which is eventually specialized to the more recognizable form (Theorem \ref{thMain}) under the assumption of smooth total space. The fact that there is a close connection between the Clemens--Schmid sequence and the Decomposition Theorem is well known to experts (e.g. Remark \ref{rem5n}(ii)). In an Appendix to our paper, M. Saito proves a general (suitable) equivalence between Clemens--Schmid sequence, the local invariant cycle Theorem, and the Decomposition Theorem over a curve.

Some concrete geometric examples are then discussed in Section \ref{sec-examples}. These range from the very classical, e.g. families of elliptic curves with various types of Kodaira fibers (Ex. \ref{ex1.5a}), to examples (Ex. \ref{ex1.5c}) that we encountered in the study of degenerations of $K3$ surfaces (see especially \cite{log2}), to the more exotic example of Katz \cite{Katz} of a family of surfaces ($p_g=2$) with $G_2$ monodromy (whose treatment uses the most general form of Clemens-Schmid). These examples serve both to illustrate the C-S and vanishing-cycle exact sequences in well-known settings, and to show the efficacy of the methods developed here in some less familiar situations. While our examples are not new per se, we believe the discussion of Section \ref{sec-examples} gives a deeper and more conceptual understanding of them. In \textbf{Part II}, further tools are developed, which will allow us to give further examples and applications. 

While there is a suitable general theory (and we only touch on MHM work of Saito), the focus of our paper is on specializing these results to concrete situations relevant for geometric questions (esp. compactifications). We start this discussion with the case of isolated singularities (Section \ref{sec-isolated}), and their relationship to the failure of the DT for non-semisimple perverse sheaves.  We then discuss (Section \ref{sec-cyclic}) another common geometric scenario - that of finite base changes and quotient singularities. While some of the discussion here might seem very special from the perspective of the general theory, in concrete geometric situations (including those considered in \textbf{Part II}) subtle issues arise. We hope that our discussion clarifies some of those issues, and we expect that further applications will be obtained in the future. Some examples (including some that we encountered in our previous work) are included along the way. 

The most novel aspects of our work occur in the last two sections. First, in Section \ref{S1.8}, we discuss the situation of one parameter degenerations of KSBA type. Among other things, we obtain Theorems \ref{thm-intro1} and \ref{thm-intro2} discussed above. In the final Section \ref{S1.9}, we start a discussion of the Hodge theoretic behavior of degenerations over multi-dimensional bases. To our knowledge, very little in this direction exists in the current literature.  We expect that the study of degenerations over multi-dimensional bases will play a more prominent role in the future - especially due to the fast progress on multi-dimensional semistable reduction theorems (Abramovich, Temkin and others, e.g. \cite{ALT}, improving on Abramovich--Karu \cite{AK}). A concrete geometric example where multi-dimensional bases occur and our methods might be relevant, we mention the case of cubic threefolds. In \cite{cubics}, a study of the degenerations of intermediate Jacobians in the classical set-up of normal crossing discriminants is done; while in \cite{LSV}, it is essential to study the degenerations of intermediate Jacobians without blowing up the discriminant to normal crossings. The method used for both of these studies is reduction to curves via Mumford's Prym construction for the intermediate Jacobian. It would be interesting to study the degenerations of intermediate Jacobians directly in terms of cubics (see \cite{brosnan} for a step in this direction). 

\subsection*{Acknowledgments}
The authors wish to thank P. Brosnan, P. Gallardo, and L. Migliorini for correspondence closely related to this work, and the IAS for providing the environment in which, some years ago, this series of papers was first conceived.  We also thank M. Green, P. Griffiths, G. Pearlstein, and C. Robles for our fruitful discussions and collaborations on period maps and moduli during the course of the NSF FRG project ``Hodge theory, moduli, and representation theory''.  Finally, we are grateful to M. Saito for his careful reading and numerous remarks which led to improvements in the exposition. We also thank M. Saito for writing an Appendix to our paper that explains the connection between C-S and DT in full generality.

\section{Motivation: {\it Why the Decomposition Theorem?}}\label{sec-motivation}

For any projective map $f:\cx\to\cs$ of quasi-projective varieties
over $\CC$ and $\ck\in D_{c}^{b}(\cx^{\an})$, the equality of functors%
\footnote{As we continue to work in the analytic topology, the superscript ``an''
will be dropped for brevity.%
} 
\[
R\Gamma_{\cs}\circ Rf_{*}=R\Gamma_{\cx}:\, D_{c}^{b}(\cx^{\an})\to D_{c}^{b}(\cs^{\an})
\]
produces the \emph{Leray spectral sequence} \begin{equation}\label{eq1.1*}
E_{2}^{p,q}=\HH^{p}(\cs,R^{q}f_{*}\ck)\;\implies\;\HH^{p+q}(\cx,\ck).
\end{equation}The accompanying \emph{Leray filtration} \begin{equation}\label{eq1.1a}
\cl^{\rho}\HH^k(\cx,\ck)\;:=\; \image\left\{\HH^k(\cs,\tau_{\leq k-{\rho}} Rf_* \ck)\to \HH^k(\cs,Rf_* \ck)\right\}
\end{equation}(with $\gr_{\cl}^{\rho}\cong E_{\infty}^{\rho,k-\rho}$) may be described
in terms of kernels of restrictions to (special) subvarieties of $\cs$
\cite{Ar}. Hence when $\ck$ has the structure of a MHM, $\cl^{\bullet}$
is a filtration by sub-MHS.

However, we would prefer to have more than just a filtration. Recall
the following classical result of Deligne \cite{D1}:
\begin{thm}
\label{th1.1} If $\cx$ and $\cs$ are smooth, $f$ is smooth projective
(of relative dimension $n$), and $\ck=\QQ_{\cx}$, then \eqref{eq1.1*}
degenerates at $E_{2}$.\end{thm}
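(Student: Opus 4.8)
The plan is to obtain $E_{2}$-degeneration from a stronger, structural statement: that $Rf_{*}\QQ_{\cx}$ splits in $D^{b}_{c}(\cs^{\an})$ as the direct sum $\bigoplus_{q} R^{q}f_{*}\QQ_{\cx}[-q]$ of its shifted cohomology sheaves. This is the Deligne--Blanchard strategy, whose one transcendental input is the Hard Lefschetz theorem along the (smooth projective) fibers. One could alternatively deduce the statement from the Decomposition Theorem applied to $\QQ_{\cx}[\dim\cx]$, but since Deligne's result is classical and the present section is precisely motivating DT, I would keep the argument self-contained.

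First I would manufacture a relative Lefschetz operator. As $f$ is projective, pick an $f$-ample line bundle on $\cx$ with rational first Chern class $\eta\in H^{2}(\cx,\QQ)=\mathrm{Hom}_{D^{b}_{c}(\cx)}(\QQ_{\cx},\QQ_{\cx}[2])$; applying $Rf_{*}$ produces a morphism $L:=Rf_{*}(\eta)\colon Rf_{*}\QQ_{\cx}\to Rf_{*}\QQ_{\cx}[2]$, which on cohomology sheaves is cup product $L\colon R^{q}f_{*}\QQ_{\cx}\to R^{q+2}f_{*}\QQ_{\cx}$. Since $f$ is smooth proper, $R^{q}f_{*}\QQ_{\cx}$ is the local system $s\mapsto H^{q}(X_{s},\QQ)$, and $\eta|_{X_{s}}$ is an ample (hence K\"ahler) class on the compact manifold $X_{s}$; classical Hard Lefschetz then gives that $L^{n-q}\colon R^{q}f_{*}\QQ_{\cx}\xrightarrow{\ \sim\ }R^{2n-q}f_{*}\QQ_{\cx}$ is an isomorphism of local systems for $0\le q\le n$.

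Next I would invoke Deligne's splitting lemma in the derived category: if $K\in D^{b}_{c}(\cs^{\an})$ carries an endomorphism $L\colon K\to K[2]$ such that, setting $\mathcal H^{j}:=\mathcal H^{j}(K)$, each iterate $L^{j}\colon\mathcal H^{-j}\xrightarrow{\ \sim\ }\mathcal H^{j}$ is an isomorphism $(j\ge 0)$, then $K\cong\bigoplus_{j}\mathcal H^{j}[-j]$. Applied to $K=Rf_{*}\QQ_{\cx}[n]$ --- for which $\mathcal H^{j}(K)=R^{n+j}f_{*}\QQ_{\cx}$ and the hypothesis is exactly the previous step --- this yields an isomorphism $Rf_{*}\QQ_{\cx}\cong\bigoplus_{q} R^{q}f_{*}\QQ_{\cx}[-q]$ in $D^{b}_{c}(\cs^{\an})$.

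Degeneration is then formal. Taking hypercohomology of the splitting,
\[
\HH^{k}(\cx,\QQ_{\cx})\;=\;\HH^{k}(\cs,Rf_{*}\QQ_{\cx})\;\cong\;\bigoplus_{q}\HH^{k-q}(\cs,R^{q}f_{*}\QQ_{\cx})\;=\;\bigoplus_{p+q=k}E_{2}^{p,q},
\]
so $\dim_{\QQ}\HH^{k}(\cx,\QQ)=\sum_{p+q=k}\dim_{\QQ}E_{2}^{p,q}$; since $\dim E_{\infty}^{p,q}\le\dim E_{r}^{p,q}\le\dim E_{2}^{p,q}$ while the totals over $p+q=k$ agree, every differential $d_{r}$ with $r\ge 2$ must vanish, i.e.\ $E_{2}=E_{\infty}$. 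The main obstacle is the one step that is not a citation, namely Deligne's splitting lemma: it is proved by induction on the amplitude of $K$, using the primitive (Lefschetz) decomposition to split off the extreme pieces $\mathcal H^{\pm n}[\mp n]$ as direct summands --- equivalently, checking that the connecting classes in $\mathrm{Ext}^{1}$ between the truncations $\tau_{\le q}K$ and $\tau_{>q}K$ are killed because $L$ identifies them with classes that visibly split. Constructing $\eta$ and $L$, and the bookkeeping that turns a splitting into $E_{2}$-degeneration, are routine by comparison.
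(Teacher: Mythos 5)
Your argument is the classical Deligne proof (relative Lefschetz operator from an $f$-ample class, fiberwise Hard Lefschetz, Deligne's splitting lemma for $Rf_*\QQ_{\cx}$, then degeneration by dimension count), which is exactly what the paper's cited reference \cite[Prop. 1.33]{PS} gives; the paper itself offers no independent proof beyond that citation. So your proposal is correct and matches the intended argument.
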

\begin{proof} See \cite[Prop. 1.33]{PS}.
\end{proof}
As an immediate corollary, this produces a noncanonical decomposition
\begin{equation}\label{eq1.1b}
H^k(\cx,\QQ)\underset{\nc}{\cong} \oplus_p H^p(\cs,R^{k-p}f_*\QQ)
\end{equation}into MHS, which includes an easy case of the global invariant cycle
theorem. Neither the description of the graded pieces of $\cl^{\bullet}$
nor its splitting in \eqref{eq1.1b} may be valid when $\cx$, $\cs$,
or $f$ is not smooth.
\begin{example}
\label{ex1.1}Let $\cy\to\PP^{1}$ be an extremal (smooth, minimal)
rational elliptic surface with (zero-)section $\sigma$. By Noether's
formula,
\[
h_{\cy}^{1,1}=10+10h_{\cy}^{2,0}-8h_{\cy}^{1,0}-K_{\cy}^{2}=10\,;
\]
and we let $\hat{\cy}\overset{\beta}{\to}\cy$ be the blow-up at a
nontorsion point $p$ on a smooth fiber $Y_{0}$, with $P=\beta^{-1}(p)\cong\PP^{1}$.
Contracting the proper transform $\hat{Y}_{0}$ of $Y_{0}$ yields
an elliptic surface $\cx\overset{f}{\to}\PP^{1}$ with isolated
$\tilde{E}_{8}$ singularity $q\in X_{0}(\cong P)=f^{-1}(0)$, since
$\hat{Y}_{0}^{2}=-1$.

First consider the Leray spectral sequence for $\pi\colon\hat{\cy}\to\cx$.
This has $E_{2}$-page \begin{equation*}
\xymatrix@R-2pc@C-1pc{H^2(\hat{Y}_0) \\ H^1(\hat{Y}_0) \ar [rrd]^{d_2} \\ H^0(\cx) & 0 & H^2(\cx) & 0 & H^4(\cx)}
\end{equation*}with $d_{2}$ injective, and $H^{2}(\hat{\cy})\cong H^{2}(\hat{Y}_{0})\oplus H^{2}(\cx)/\image(d_{2})$.
(Note that $H^{2}(\cx)/\image(d_{2})$ and $H^{2}(\cy)$ both have
Hodge numbers $(0,10,0)$.) So degeneration at $E_2$ fails.

On the other hand, the Leray spectral
sequence for $f\colon \cx \to \PP^1$ takes the form 
\begin{equation*}
\xymatrix@R-2pc@C-1pc{H^0(R^2 f_*\QQ) & 0 & \QQ(-2)\\ 0 & H^1(R^1 f_*\QQ) & 0 \\ \QQ & 0 & H^2(R^0  f_*\QQ)}
\end{equation*}
with $d_{2}$ zero. However, the resulting Leray filtration on $H^2(\cx)$ is non-split in the category of MHS.  To see this, remark that:
\begin{itemize}[leftmargin=0.5cm]
\item $H^{2}(R^{0} f_{*}\QQ)\cong \QQ(-1)$ is generated
by the class of a smooth fiber;
\item $H^{1}(R^{1}f_{*}\QQ)\cong H^{1}(Y_{0})$ with $Y_0$ a smooth elliptic curve; and
\item $H^{0}(R^{0}f_{*}\QQ)\cong\QQ(-1)^{\oplus9}$ is generated by 8 components of fibers other than $X_{0}$, and one cocycle $\kappa\in\text{Cone}(C^{\bullet}(\hat{\cy})\to C^{\bullet}(\hat{Y}_{0}))$ given by $(P-\sigma,\wp)$, where $\wp$ is a path on $\hat{Y}_{0}$ from $0$ to $p$. 
\end{itemize}
Writing $\Omega^{1}(\hat{Y}_{0})=\CC\langle\omega_{0}\rangle$,
$\int_{\wp}\omega_{0}=\text{AJ}(p-0)$ gives the (nontorsion) extension
class of $[\kappa]$ by $H^{1}(Y_{0})$ in $H^{2}(\cx)$, and hence
of $\gr_{\cl}^{0}$ by $\gr_{\cl}^{1}$. Of course, Poincar\'e duality
also fails for $H^{2}(\cx)$.
\end{example}
As we shall see, one gets better behavior on all fronts by using perverse
Leray filtrations and intersection complexes.

\section{Perverse Leray}\label{sec-leray}

We begin by stating the Decomposition Theorem (DT) for a projective morphism $f:\cx\to\cs$ of complex algebraic varieties (of relative dimension $n=d_{\cx}-d_{\cs}$).  
Let $\ck\in D_c^b (\cx^{(\an)})$ [resp. $D^b \mathrm{MHM}(\cx)$] be a complex of sheaves of abelian groups [resp. mixed Hodge modules] which is constructible with respect to some stratification $\str$. 
Assume that $\ck$ is \textbf{semisimple} in the sense of being a direct sum of shifts of (semi)simple perverse sheaves $\IC_{\mathcal{Z}}(\mathbb{L})$ [resp. polarizable Hodge modules $\mathrm{MH}_{\mathcal{Z}}(\mathcal{L})$].\footnote{Here $\mathbb{L}$ [resp. $\mathcal{L}$] is a (semi)simple local system [resp. polarizable VHS] on a Zariski open in $\mathcal{Z}$. For uniformity of notation we shall use the notation $\IC_{\mathcal{Z}}(\mathbb{L})$ to refer to both perverse sheaves and Hodge modules.} References for the following statement are \cite[Thm. 6.2.5]{BBD}, \cite[Thm. 2.1.1]{dCM2}, and \cite{dC} for the perverse sheaf version, resp. \cite[Thms. 0.1-0.3]{toh} and \cite[(4.5.4)]{mhm} for the MHM version.
\begin{thm}[Decomposition Theorem] \label{th1.2}
\begin{enumerate}[label={\bf (\alph*)}, leftmargin=0.8cm]
\item[]
\item Writing $\pr^{j}f_{*}$ for $^{p}\mathcal{H}^{j}Rf_{*}$,
we have \begin{equation}\label{eq1.2a}
R f_* \ck \;\underset{\nc}{\simeq} \;\oplus_j (\pr^j f_* \ck)[-j] \;\simeq\; \oplus_{j,d} \IC_{Z_d}(\VV^j_d(\ck)[d])[-j]
\end{equation}as \emph{(}up to shift\emph{)} perverse sheaves \emph{{[}}resp. polarizable
Hodge modules\emph{{]}}, for some local systems \emph{{[}}resp. polarizable
VHS\emph{{]}} $\VV_{d}^{j}(\ck)$ on $Z_{d}\backslash Z_{d-1}$ \emph{(}smooth
of dimension $d$\emph{)}. Moreover, the $\pr^{j}f_{*}\ck$ are \textbf{semisimple}
perverse.

\item If $h$ is the class of a relatively ample line bundle
on $\cx$ and $\ck$ is perverse, then multiplication by $h^{j}$
induces an isomorphism \begin{equation}\label{eq1.2b}
\pr^{-j}f_* \ck (-j) \overset{\simeq}{\longrightarrow} \pr^j f_*\ck
\end{equation}for each $j\geq0$.
\end{enumerate}
\end{thm}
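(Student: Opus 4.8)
Since Theorem~\ref{th1.2} is the Decomposition Theorem, the plan is not to reprove it but to reduce each of its two assertions to its genuine core and then quote that core, separately for the perverse-sheaf version (which rests on weights over a finite field) and the MHM version (which rests on Hodge-theoretic polarizations, and is the one we actually use). For the perverse-sheaf statement I would run BBD's positive-characteristic argument: spread $\cx$, $f$, and $\ck$ out over a subring of finite type of $\CC$, reduce modulo a good prime $p$, note that $\IC_{\mathcal{Z}}(\mathbb{L})$ becomes pure of some weight (for $\mathbb{L}$ of geometric origin --- for general semisimple $\mathbb{L}$ one invokes instead the theory of polarizable pure twistor $\Dc$-modules of Sabbah and Mochizuki), apply Deligne's Weil~II --- stability of purity under $Rf_*$, semisimplicity of pure perverse sheaves over $\overline{\mathbb{F}_q}$, and relative Hard Lefschetz there --- and specialize back to $\CC$; alternatively cite the Hodge-theoretic proof of \cite{dCM2,dC}.

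For the MHM version, write $M$ for the polarizable pure Hodge module of some weight $w$ underlying (a shift of) $\ck$. Step~1 is Saito's semisimplicity theorem: for each $w$ the category of polarizable pure Hodge modules of weight $w$ on a variety is abelian and semisimple, with simple objects exactly the $\IC_{\mathcal{Z}}(\mathcal{L})$ for $\mathcal{Z}$ an irreducible closed subvariety and $\mathcal{L}$ a polarizable VHS on a Zariski-open of $\mathcal{Z}_{\sm}$, and no nontrivial extensions between simples of equal weight. Granting this, the second isomorphism in \eqref{eq1.2a} is merely the isotypic decomposition of a polarizable pure Hodge module, re-indexed by the dimension $d$ of support; so part~(a) is reduced to showing that each $\pr^{j}f_{*}M={}^{p}\!\mathcal{H}^{j}Rf_{*}M$ is a polarizable pure Hodge module (of weight $w+j$), and that there is a noncanonical isomorphism $Rf_{*}M\simeq\bigoplus_{j}(\pr^{j}f_{*}M)[-j]$.

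Step~2 (purity of direct images) is Saito's stability theorem: for $f$ projective and $M$ polarizable pure of weight $w$, $f_{*}M$ is strict and each $\pr^{j}f_{*}M$ is again polarizable pure of weight $w+j$; this is proved by induction on $\dim\cx$, factoring $f$ as a closed embedding followed by a projection $\PP^{N}\times\cs\to\cs$ and treating the latter one coordinate at a time, the $\PP^{1}$-fibration step going through a careful analysis of $\ppsi$ and $\pphi$. Step~3 (relative Hard Lefschetz) is precisely assertion~(b): for $h=c_{1}(\mathcal{L}_{f})$ with $\mathcal{L}_{f}$ relatively ample, cup product gives morphisms of Hodge modules $h\colon\pr^{j}f_{*}M\to\pr^{j+2}f_{*}M(1)$, and one must show $h^{j}\colon\pr^{-j}f_{*}M(-j)\xrightarrow{\ \sim\ }\pr^{j}f_{*}M$ for $j\ge 0$. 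Using a polarization of $M$, the total direct image becomes a polarized Hodge--Lefschetz module for the $\mathfrak{sl}_{2}$ generated by $h$, and Saito's relative Hard Lefschetz --- generalizing Kashiwara's $\Dc$-module argument, and reducible by the ``one ample divisor at a time'' device (cut $\cx$ by a general member of $|\mathcal{L}_{f}|$ and chase the weak-Lefschetz sequences) to the relative-dimension-one case --- yields the isomorphism. Step~4: granted Step~3, the splitting $Rf_{*}M\simeq\bigoplus_{j}(\pr^{j}f_{*}M)[-j]$ is Deligne's formal lemma that an object of a derived category carrying an $\mathfrak{sl}_{2}$-action with the Hard Lefschetz isomorphism property on its cohomology objects is noncanonically the sum of its shifted cohomology objects; this also yields semisimplicity of $\pr^{j}f_{*}M$ via Step~1.

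The main obstacle is, unsurprisingly, Steps~2 and~3: purity (hence semisimplicity) of the perverse direct-image sheaves and relative Hard Lefschetz are the real analytic content, with no short route --- in this paper they are imported wholesale from \cite{toh,mhm} (respectively \cite{BBD} in the arithmetic setting).
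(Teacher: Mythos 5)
Your proposal is correct and takes the same route as the paper: the paper offers no proof of Theorem~\ref{th1.2} at all, but simply cites \cite{BBD}, \cite{dCM2}, \cite{dC} for the perverse-sheaf version and \cite{toh}, \cite{mhm} for the MHM version, which is exactly where your sketch (Saito's semisimplicity and stability theorems, relative Hard Lefschetz, and Deligne's formal splitting lemma) bottoms out.
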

\begin{rem}
\label{rm1.2}{\bf (i)} If $\ck$ is a (pure) Hodge module of weight
$w$, the $\pr^{j}f_{*}\ck$ {[}resp. $Rf_{*}\ck${]} is pure of weight
$w+j$ {[}resp. $w${]}, and $\VV_{d}^{j}(\ck)$ underlies a VHS of
weight $w+j-d$.

{\bf (ii)} In the key special case where $\ck=\IC_{\cx}$ (which
is $\QQ_{\cx}[d_{\cx}]$ if $\cx$ is smooth), we write $\VV_{d}^{j}(\ck)=:\VV_{d}^{j}$.
In view of \cite{toh}, we still have \eqref{eq1.2a} in this
case when we relax the hypotheses on $f:\cx\to\cs$ to: $f$ proper,
$\cx$ Fujiki class C (dominated by K\"ahler).

{\bf (iii)} Although $\QQ_{\cx}[d_{\cx}]$ is perverse as long as
$\cx$ has local complete intersection singularities, it may not be
semisimple (and the DT may not apply). See Example \ref{ex1.6a} below.

{\bf (iv)} When restricting $f$ to an open analytic subset of the base such as a polydisk $\Delta^r$ (as we shall do below), \eqref{eq1.2a}-\eqref{eq1.2b} still hold though the $\VV_d^j (\ck)$ may not be semisimple \emph{on} $\Delta^r$.  For this reason (only), one should not call the $\pr^j f_*\ck$ ``semisimple'' in this setting.  Instead we shall say that they \emph{decompose}.

{\bf (v)} Over a disk, a weak form of the DT (first $\simeq$ of \eqref{eq1.2a}) holds without the semisimplicity constraint on $\ck$; see the Appendix by M. Saito.
\end{rem}

\begin{rem}
When $\cx\to \cs=\PP H^0(\mathrm{X},\co(L^{\otimes m}))$ ($L^{\otimes m}$ very ample) is the universal hypersurface section of a smooth $2D$-fold $\mathrm{X}$, the \emph{perverse weak Lefschetz theorem} \cite[Thm. 5.2]{BFNP} says that $\VV^j_d =0$ unless $j=0$ or $d=d_{\cs}$.  Moreover, $\VV^j_{d_{\cs}}$ is constant $\equiv H^{j+(2D-1)}(\mathrm{X})$ if $j\neq 0$, and $\VV_{d_{\cs}-1}^0 =0$ for $m\gg 0$. This plays a key r\^ole in producing singularities in normal functions associated to $D$-dimensional cycles on $\mathrm{X}$.
\end{rem}

Taking hypercohomology of \eqref{eq1.2a} yields a decomposition \begin{equation} \label{eq1.2c}
\HH^k(\cx,\ck)\;\cong\; \oplus_i \HH^i(\cs,\pr^{k-i}f_*\ck)\;\cong\;\oplus_{i,d}\; \IH^{d+i}(Z_d,\VV_d^{k-i}(\ck))
\end{equation}on the level of mixed Hodge structures. The \emph{perverse Leray filtration}
induced by \begin{equation} \label{1.2d}
\pl^{-\alpha} Rf_*\ck \; :=\; \oplus_{j\leq\alpha} (\pr^j f_*\ck)[-j]
\end{equation}is simply \begin{equation} \label{1.2e}
\pl^{\beta}\HH^k (\cx,\ck)\; :=\; \HH^k(\cs,\pl^{\beta-k}Rf_*\ck)\;\cong\; \oplus_{i \geq \beta} \HH^i(\cs,\pr^{k-i} f_*\ck).
\end{equation}
That is, under our hypotheses (of semisimplicity for $\ck$ and projectivity for $f$), the \emph{perverse} Leray spectral sequence $\HH^i(\cs,\pr^j f_* \ck)\implies\HH^*(\cx,\ck)$  converges at $E_2$, and the resulting (perverse Leray) filtration on $\HH^*(\cx,\ck)$ is split in the category of MHS.  This stands in marked contrast to the scenario in Example \ref{ex1.1}.
\begin{example}\label{ex2.1}
For any variety $\fx$, the morphism $\QQ_{\fx}[d_{\fx}]\to \IC_{\fx}$ in $D^b\mathrm{MHM}(\fx)$ induces a MHS map $H^k(\fx)\to\IH^k(\fx)$, and for a projective resolution of singularities $\tilde{\fx} \overset{\pi}{\twoheadrightarrow} \fx$ with compact exceptional divisor, $\tfrac{H^k(\fx)}{W_{k-1}}\overset{\pi^*}{\hookrightarrow}H^k(\tilde{\fx})$ (by the same proof as \cite[Thm. 5.41]{PS}).  Theorem \ref{th1.2} guarantees that $\IC_{\fx}$ is a direct summand of $R\pi_*\QQ_{\tilde{\fx}}[d_{\fx}]$, so that $\tfrac{H^k(\fx)}{W_{k-1}}\hookrightarrow \IH^k(\fx)\hookrightarrow H^k(\tilde{\fx})$.  For $\fx$ compact, the first term becomes $\mathrm{Gr}^W_k H^k(\fx)$, and the first injection is \cite[Thm. 3.2.1]{dCMS}.

In particular, for $\pi\colon \hat{\cy}\to \cx$ as in Example \ref{ex1.1}, we have $\gr^W_2 H^2(\cx) \cong \IH^2(\cx) \cong \QQ(-1)^{\oplus 10}$ and $H^2(\hat{\cy})\cong \QQ(-1)^{\oplus 11}$.  Writing $f^{\text{sm}}\colon f^{-1}(U)\to U$ for the smooth part of $f$ and $\ch^i := R f^{\text{sm}}_*\QQ$, Theorem \ref{th1.2} applies to:
\begin{itemize}[leftmargin=0.5cm]
\item \underline{$\ck = \QQ_{\hat{\cy}}[2]$ and $\pi$} $\implies$ $R\pi_*\ck \simeq \pr^0 \pi_* \ck \simeq \IC_{\cx}\oplus \imath^q_*\QQ(-1)$ (cf. \eqref{eq1.6a}-\eqref{eq1.6b}), so that $\pl_{\pi}^{\bullet}$ on $H^2(\hat{\cy})$ is trivial; and
\item \underline{$\ck=\IC_{\cx}$ and $f$} $\implies$ $Rf_* \ck \simeq \oplus_{j=-1}^1 (\pr^j f_* \ck)[-j]$ with $\pr^j f_* \ck \simeq \IC_{\PP^1}(\ch^{j+1}[1])$ ($j=\pm1$) and $\pr^0 f_* \ck \simeq \IC_{\PP^1}(\ch^1 [1])\oplus \bigoplus_{\sigma\in \PP^1\setminus U}\IH^2_{\pha,\sigma}$ (cf. \eqref{p10d}).  The graded pieces of $\pl^{\bullet}_f$ on $\IH^2(\cx)$ (cf. \eqref{eq1.4d} and \eqref{eq1.4h}) are then $\gr^{-1}_{\pl}\cong H^0(U,\ch^2)\cong \QQ(-1)$ (class of a section), $\gr^1_{\pl}\cong H^2_c (U,\ch^0)\cong \QQ(-1)$ (class of a smooth fiber), and $\gr_{\pl}^0 \cong \oplus_{\sigma}\IH^2_{\pha,\sigma}\cong \QQ(-1)^{\oplus 8}$ (from singular fibers; $\IH^1(\PP^1,\ch^1)$ vanishes).
\end{itemize}
Theorem \ref{th1.2} does \emph{not} apply to $\ck =\QQ_{\cx}[2]$ and $f$; see Example \ref{ex1.6a}.
\end{example}

We now look more systematically at immediate consequences of the DT for families over
a curve and resolutions of isolated singularities.

\section{Decomposition Theorem over a curve (1): Nearby and vanishing cycles}\label{S1.3}

Consider the scenario\begin{equation}\label{eq1.3*}
\xymatrix{& \cx_U \ar @{^(->} [r]^{\mathcal{J}} \ar[d]^{\fsm} & \cx \ar[d]^f & \cx_{\Sigma} \ar[d] \ar @{_(->}[l]_{\ci} \ar @{=}[r] & \cup_{\sigma\in\Sigma} \Xs \\ \cs\setminus\Sigma \ar@{=}[r] & U \ar @{^(->}[r]^{\jmath} & \cs & \Sigma \ar@{_(->}[l]_{\imath} }
\end{equation}where $d_{\cs}=1$ ($\implies d_{\cx}=n+1$), $\cs$ is smooth, $\cx_{U}$
and $\str|_{\cx_{U}}$ are topologically locally constant (e.g. equisingular)
over $U$, and our semisimple $\ck$ belongs to $\mathrm{MHM}(\cx)$ (i.e. its underlying complex is perverse).
For each $j$, we have \begin{equation} \label{eq1.3a}
\pr^j f_* \ck = \jmath_* \VV^j(\ck)[1]\oplus\bigoplus_{\sigma\in\Sigma} \imath^{\sigma}_* W^j_{\sigma}(\ck),
\end{equation}where $\VV^{j}(\ck)=R^{j-1}\fsm_{*}\ck|_{\cx_{U}}$ are local systems/VMHS
and $W_{\sigma}^{j}(\ck)$ are vector spaces/MHS.
Note that by $\Xs$ (and later, $X_{0}$) we always mean the \emph{reduced} special fiber, since MHM live on a complex analytic space.\footnote{Warning: the (``nonreduced'') components of $\Xs$ along which a local coordinate $t$ has order $>1$ are philosophically part of the singularity locus of $\Xs$, e.g. when considering support of $\pphi_t \QQ_{\cx}$.  See Prop. \ref{prop1.4} below.}

Writing $t$ for (the composition of $f$ with) a local coordinate
on a small disk $\Delta_{\sigma}\subset\cs$ about $\sigma$, the
associated nearby and vanishing cycle functors sit in a dual pair of distinguished
\emph{vanishing cycle triangles}\footnote{See \cite{{D3}} for the first and \cite[5.2.1]{mhp} for the second in the form used here.  Note that $\pphi_t :=\phi_t [-1]$ and $\ppsi_t := \psi_t [-1]$ send $\mathrm{MHM}(\cx)$ to $\mathrm{MHM}(X_{\sigma})$.} 
\begin{equation} \label{eq1.3b}
\xymatrix{\ci_{\sigma}^* \ar[rr]^{\sp} && \psi_t \ar[ld]^{\can} \\ & \phi_t \ar[ul]_{+1}^{\delta}}\;\;\;\;\text{and}\;\;\;\;\xymatrix@C-1pc{\ci^!_{\sigma} \ar[rr]^{\delta^{\vee}} && \pphi_t  \ar[ld]^{\var} \\ & \ppsi_t (-1) \ar[ul]_{+1}^{\sp^{\vee}} ,}
\end{equation}and satisfy $R\Gamma\ci_{\sigma}^{!}=\imath_{\sigma}^{!}Rf_{*}$,
$R\Gamma\ci_{\sigma}^{*}=\imath_{\sigma}^{*}Rf_{*}$, $R\Gamma\psi_{t}=\psi_{t}Rf_{*}$,
and $R\Gamma\phi_{t}=\phi_{t}Rf_{*}$ \cite{Ma}. Applied to $\ck$,
each morphism in the triangles yields a morphism of MHM, with the
exception of $\var$:\footnote{The tilde reflects the fact that, while related, $\var$ is not the standard $\mathtt{var}$ in the theory of perverse sheaves, because we do not have $\var\circ \can=T-I$ (see below).} here one needs to break $\psi_{t}\ck$ and $\phi_{t}\ck$
into unipotent and non-unipotent parts for the action of $T_{\sigma}$,
whereupon $\var^{u}:\phi_{t}^{u}\to\psi_{t}^{u}(-1)$ and $\var^{n}:\phi_{t}^{n}\to\psi_{t}^{n}$
induce MHM maps. Here $\var^u$ [resp. $\var^n$] is the morphism $\mathrm{Var}$ in \cite[3.4.10]{mhp} [resp. a canonical isomorphism], and $(-)^u = \ker (T^{\text{ss}}-I)$ [resp. $(-)^n =\text{im}(T^{\text{ss}}-I)$] is written $(-)_1$ [resp. $(-)_{\neq 1}$] in the notation of \textit{op. cit.}  (see also \cite[$\S\S$8-9]{Sn}).

Next, setting $\VV_{\lm}^{\ell}(\ck):=\psi_{t}\jmath_{*}\VV^{\ell}(\ck)$,
we have the monodromy invariants $\VV_{\lm}^{\ell}(\ck)^{T_{\sigma}}:=\ker(T_{\sigma}-I)$
and coinvariants $\VV_{\lm}^{\ell}(\ck)_{T_{\sigma}}:=\mathrm{coker}(T_{\sigma}-I)$.
By the DT, we compute \begin{equation} \label{eq1.3c}
\begin{split}
H^{\ell}(\Xs,\ck)&:= \HH^{\ell}(\Xs,\ci_{\sigma}^*[-1]\ck)\cong H^{\ell-1}(\imath_{\sigma}^*Rf_*\ck)\\
&=\oplus_jH^{\ell-j-1}(\imath_{\sigma}^* \pr^j f_*\ck)\\
&\cong \VV_{(\lm)}^{\ell}(\ck)^{T_{\sigma}} \oplus W_{\sigma}^{\ell-1}(\ck)
\end{split}
\end{equation}for the \emph{special fiber cohomology} and \begin{equation} \label{eq1.3d}
\begin{split}
H^{\ell}_{\Xs}(\cx,\ck)&:= \HH^{\ell}(\Xs,\ci^!_{\sigma}[-1]\ck)\cong H^{\ell-1}(\imath_{\sigma}^! Rf_*\ck)\\
&= \VV^{\ell-2}_{(\lm)}(\ck)_{T_{\sigma}}(-1)\oplus W_{\sigma}^{\ell-1}(\ck)
\end{split}
\end{equation}for the \emph{special fiber ``homology''}, where we used $\imath_{\sigma}^{*}\jmath_{*}\VV=H^{0}(\Delta_{\sigma},\jmath_{*}\VV)=\VV^{T_{\sigma}}$
and $\imath_{\sigma}^{!}\jmath_{*}\VV=H_{c}^{2}(\Delta_{\sigma},\jmath_{*}\VV)[-2]\cong H^{1}(\Delta_{\sigma}^{*},\VV)[-2]=\VV_{T_{\sigma}}(-1)[-2].$
We also write \begin{equation} \label{eq1.3e}
\begin{split}
H^{\ell}_{\lm,\sigma} (\ck) &:= \HH^{\ell}(\Xs,\ppsi_t\ck)\cong H^{\ell-1}(\psi_t Rf_*\ck)\\ &= \VV_{\lm}^{\ell}(\ck)
\end{split}
\end{equation}for the \emph{limiting cohomology} and \begin{equation} \label{eq1.3f}
H^{\ell}_{\van,\sigma}(\ck):= \HH^{\ell}(\Xs,\pphi_t\ck) \cong H^{\ell-1}(\phi_t Rf_*\ck)
\end{equation}for the \emph{vanishing cohomology}. These spaces carry natural MHSs
with morphisms induced by the MHM-maps above; we can either break
$\var:H_{\van,\sigma}^{\ell}(\ck)\to H_{\lm,\sigma}^{\ell}(\ck)$
into unipotent and non-unipotent parts, or regard it as a map of $\QQ$-vector
spaces --- one whose composition $\var\circ\can\in\mathrm{End}_{\QQ}(H_{\lm,\sigma}^{\ell}(\ck))$
with $\can$ yields $N_{(H^{\ell}_{\lm})^u}\oplus \mathrm{Id}_{(H^{\ell}_{\lm})^n}$. While not a morphism of MHS (since $N$ is a $(-1,-1)$ morphism on $(H^{\ell}_{\lm})^u$),
the kernel {[}resp. cokernel{]} of the latter (which is the same as the kernel [resp. cokernel] of $T_{\sigma}-I$) is a sub- {[}resp. quotient-{]}
MHS of $H_{\lm,\sigma}^{\ell}(\ck)$.

It remains to better understand $H_{\van,\sigma}^{\ell}(\ck)$ and
$W_{\sigma}^{j}(\ck)$. For \emph{any} (not necessarily semisimple)
perverse sheaf $\cp$ on $\cs$, sub- resp. quotient- objects of $\cp$
supported on $\{\sigma\}$ correspond to $\ker(\var)$ resp. $\mathrm{coker}(\can)$
on $\pphi_{t}\cp$ \cite{Sn}. So for $\cp$ semisimple, we have $\pphi_{t}\cp=\ker(\var)\oplus\image(\can)$,
which (together with $\ppsi_{t}\overset{\can}{\to}\pphi_{t}\to\imath_{\sigma}^{*}\overset{+1}{\underset{\sp}{\to}}$
and $\var\circ\can=N\oplus I$) yields identifications \begin{equation} \label{eq1.3g}
\left\{
\begin{split}
\image(\can)&\cong \mathrm{coim}(\var) \cong \ppsi_t\cp /\ker(T_{\sigma}-I) \\
\ker(\var) &\cong \mathrm{coker}(\can)\cong \ker\{ \sp:\imath_{\sigma}^* \to \ppsi_t \cp [1]\} .
\end{split}
\right.
\end{equation}If $\cp=\pr^{j}f_{*}\ck$, then $\image(\can)\cong\VV_{\lm}^{j}(\ck)/\ker(T_{\sigma}-I)$
while $\sp$ maps $\VV_{\lm}^{j}(\ck)^{T_{\sigma}}[1]\oplus W_{\sigma}^{j}(\ck)\to\VV_{\lm}^{j}(\ck)[1]$;
we conclude that \begin{equation} \label{eq1.3h}
\begin{split}
H^{\ell}_{\van,\sigma}(\ck) &= \oplus_j H^{\ell-j}(\pphi_t \pr^j f_* \ck) \\
&=\frac{\VV_{\lm}^{\ell}(\ck)}{\ker(T_{\sigma}-I)}\oplus W_{\sigma}^{\ell}(\ck).
\end{split}
\end{equation}

\begin{rem}
To put this more simply, a perverse sheaf $\mathsf{P}$ on $\Delta$ decomposes (\`a la Remark \ref{rm1.2}(iv)) $\iff$ $\pphi_t \mathsf{P}=\ker(\var)\oplus\mathrm{im}(\can)$ $\iff$ $\mathsf{P}$ takes the form $\imath_* W\oplus \jmath_* \VV[1]$ $\iff$ the corresponding quiver representation $\ppsi_t \mathsf{P} \underset{\var}{\overset{\can}{\rightleftarrows}} \pphi_t \mathsf{P}$ takes the form $V\underset{(N\oplus I,0)}{\overset{(\eta,0)}{\rightleftarrows}} \frac{V}{V^T} \oplus W.$ The decomposition \eqref{eq1.3h} is precisely this statement for $\mathsf{P}=(\pr^j f_* \ck)|_{\Delta}$, which decomposes by Theorem \ref{th1.2}.
\end{rem}

Finally, consider the composition \begin{equation} \label{eq1.3i}
H_{\Xs}^{\ell}(\cx,\ck) \overset{\delta^{\vee}}{\to} H^{\ell-1}_{\van,\sigma}(\ck) \overset{\delta}{\to} H^{\ell}(\Xs,\ck),
\end{equation}in which both maps are the identity on $W_{\sigma}^{\ell-1}(\ck)$
(and zero on the other summand). If $\cx_{\Delta_{\sigma}}=f^{-1}(\Delta_{\sigma})$
and $p\in\Delta_{\sigma}\backslash\{\sigma\}$, then \eqref{eq1.3i}
is really just the map $H_{\Xs}^{\ell}(\cx_{\Delta_{\sigma}})\to H^{\ell}(\cx_{\Delta_{\sigma}},X_{p})\to H^{\ell}(\Xs)$
with $\ck[-1]$-coefficients; the composite is the same if the middle
term is replaced by $H^{\ell}(\cx)$. Defining the \emph{phantom cohomology
at} $\sigma$ by \begin{equation} \label{eq1.3j}
H^{\ell}_{\pha,\sigma}(\ck) :=\ker\left\{\sp: H^{\ell}(\Xs,\ck)\to H^{\ell}_{\lm,\sigma}(\ck)\right\},
\end{equation}we therefore have \begin{equation*}
\begin{split}
H^{\ell}_{\pha,\sigma}(\ck) &= W_{\sigma}^{\ell-1}(\ck)=\image(\delta\circ \delta^{\vee})\\
&= \image\left(H^{\ell}_{\Xs}(\cx,\ck)\overset{\ci^{\sigma}_*}{\to}\HH^{\ell}(\cx,\ck[-1])\overset{\ci_{\sigma}^*}{\to} H^{\ell}(\Xs,\ck)\right).
\end{split}
\end{equation*}Denote $\gy:=\ci_{\sigma}^{*}\circ\ci_{*}^{\sigma}$ in the sequel.

\section{Decomposition Theorem over a curve (2): Consequences}\label{SDT2}

Continuing for the moment with $\ck\in\mathrm{MHM}(\cx)$ semisimple
(but otherwise arbitrary), there are a couple
of different ways to relate the special fiber cohomology and the limiting
cohomology. The immediate consequence of the first triangle of \eqref{eq1.3b}
is the \emph{vanishing cycle sequence} (of MHS) \begin{equation} \label{eq1.4a}
\cdots \to H^{\ell}(\Xs,\ck)\overset{\sp}{\to}H^{\ell}_{\lm,\sigma}(\ck)\overset{\can}{\to}H^{\ell}_{\van,\sigma}(\ck)\overset{\delta}{\to}H^{\ell+1}(\Xs,\ck)\to\cdots,
\end{equation}which is useful whenever one has methods to compute $\phi_{t}\ck$,
a subject taken up in Part II.

Also evident from the identifications in $\S$\ref{S1.3} is the \emph{Clemens--Schmid
sequence} \small \begin{equation} \label{eq1.4b}
0\to H^{\ell-2}_{\lm,\sigma}(\ck)_{T_\sigma} (-1)\overset{\sp^{\vee}}{\to}H_{\Xs}^{\ell}(\cx,\ck)\overset{\gy}{\to}H^{\ell}(\Xs,\ck)\overset{\sp}{\to}H^{\ell}_{\lm,\sigma}(\ck)^{T_{\sigma}}\to 0 ,
\end{equation} \normalsize which does away with the vanishing cohomology. The \emph{local invariant
cycle theorem} expressed by surjectivity of $\sp$ can be seen more
briefly by just taking stalks on both sides of \eqref{eq1.3a}. 
\begin{rem}\label{rem5n}
\textbf{(i)} A more elegant approach to \eqref{eq1.4b} can be formulated in terms of the octahedral axiom (cf. \cite[Rm. 5.2.2]{mhp} and also the Appendix below); though one must still invoke the DT to get \eqref{eq1.3a} (equiv. (5.2.2.3) in \textit{loc. cit.}).

\textbf{(ii)} In general, if $\mathrm{X}$ is a complex analytic space, $F:\mathrm{X}\to \Delta$ is proper, and $\mathsf{K}\in D^b\mathrm{MHM}(\mathrm{X})$ is self-(Verdier-)dual, then \textit{(a)} $\pr^j F_*\mathsf{K}$ decomposes ($\forall j$) and \textit{(b)} C-S \eqref{eq1.4b} holds ($\forall \ell$) are equivalent.  (This follows at once from duality of $\can$ and $\var$.) Also see Theorem A.4 in the Appendix below.
\end{rem}
There are two amplifications that make \eqref{eq1.4b} more useful: first,
one can extend it to a longer sequence of MHS by using the unipotent
parts:
\[
H^{\ell}(\Xs,\ck)\overset{\sp}{\to}H_{\lm,\sigma}^{\ell}(\ck)^{u}\overset{N_{\sigma}}{\to}H_{\lm,\sigma}^{\ell}(\ck)^{u}(-1)\overset{\sp^{\vee}}{\to}H_{\Xs}^{\ell+2}(\cx,\ck)(-1).
\]
(Notice that $\{\ker(N_{\sigma})\subset H_{\lm,\sigma}^{\ell}(\ck)^{u}\}=\{\ker(T_{\sigma}-I)\subset H_{\lm,\sigma}^{\ell}(\ck)\}=\{\ker(\can)\subset H_{\lm,\sigma}^{\ell}(\ck)\}$.)
Second, the hard Lefschetz part of DT implies isomorphisms $\VV^{-j}(\ck)\overset{h^{j}}{\to}\VV^{j}(\ck)(j)$
and $W^{-j}(\ck)\overset{h^{j}}{\to}W^{j}(\ck)(j)$, hence \begin{equation} \label{eq1.4c}
H^{-\ell}_{\lm,\sigma}(\ck)\overset{\cong}{\underset{h^{\ell}}{\to}} H^{\ell}_{\lm,\sigma}(\ck)(\ell) \;\;\text{and}\;\; H^{-\ell+1}_{\pha,\sigma}(\ck)\overset{\cong}{\underset{h^{\ell}}{\to}} H^{\ell+1}_{\pha,\sigma}(\ck)(\ell).
\end{equation}

In addition to these local results, we mention one consequence of
a global flavor: the \emph{generalized Shioda formula}, which for
$\cs$ a complete curve reads \begin{equation}\label{eq1.4d}
\begin{split}
\HH^k(\cx,\ck)&\cong \oplus_{i=-1}^1 \gr^i_{\pl}\HH^k(\cx,\ck)\cong\oplus_{i=-1}^1 \HH^i(\cs,\pr^{k-i}f_*\ck)\\
&\cong\oplus_{i=-1}^1 \left\{\HH^{i+1}(\cs,\jmath_* \VV^{k-i}(\ck))\oplus \HH^i(\cs,\imath_* W^{k-i}(\ck))\right\}\\
&= H^0(U,\VV^{k+1}(\ck))\oplus \left\{\IH^1(\cs,\VV^k(\ck))\oplus\bigoplus_{\sigma}W_{\sigma}(\ck)\right\}\\ &\mspace{100mu}\oplus H^2_c(U,\VV^{k-1}(\ck)),
\end{split}
\end{equation}where we remark that the last term $\cong H^{0}(U,\VV^{k-1}(\ck))^\vee(-1)$.
On the other hand, if $\cs$ is a quasi-projective curve, the last
term is simply omitted. In either case, $\HH^{k}(\cx,\ck)$ surjects
onto the first ($i=-1$) term, i.e. the \emph{global invariant cycle
theorem} holds.

Now we specialize to the case $\ck=\IC_{\cx}$, noting (in light of
Remark \ref{rm1.2}(ii)) that we can relax the hypotheses on $f,\cx,\cs$
somewhat if we ignore the hard Lefschetz statements. By considering
that $\IC_{\cx}|_{\cx^{\sm}}=\QQ_{\cx^{\sm}}[n+1]$ on the smooth
part of $\cx$ (to get the degrees right), one arrives at identifications
$\HH^{k}(\cx,\IC_{\cx})=\IH^{k+n+1}(\cx)$, $H^{k}(\Xs,\IC_{\cx})=H^{k-1}(\imath_{\sigma}^{*}Rf_{*}\IC_{\cx})=\IH^{k+n}(\cx_{\Delta})$,
and $H_{\Xs}^{k}(\cx,\IC_{\cx})=H^{k-1}(\imath_{\sigma}^{!}Rf_{*}\IC_{\cx})=\IH_{c}^{k+n}(\cx_{\Delta})$.
Accordingly we write
\begin{equation} \label{p10d}
\IH_{\pha,\sigma}^{k+n}:=\image\left\{ \IH_{c}^{k+n}(\cx_{\Delta})\to\IH^{k+n}(\cx_{\Delta})\right\} =H_{\pha,\sigma}^{k}(\IC_{\cx}),
\end{equation}
and note that there are morphisms (of MHS) from $H^{k+n}(\cx_{\Delta})\cong H^{k+n}(\Xs)$
{[}resp. $H_{c}^{k+n}(\cx_{\Delta})\cong H_{n-k+2}(\Xs)(-n-1)$, $H_{\pha,\sigma}^{k+n}:=\image\{H_{n-k+2}(\Xs)(-n-1)\overset{\gy}{\to}H^{n+k}(\Xs)\}${]}
to $\IH^{k+n}(\cx_{\Delta})$ {[}resp. $\IH_{c}^{k+n}(\cx_{\Delta})$,
$\IH_{\pha,\sigma}^{k+n}${]}, which in general are neither injective
nor surjective.%
\footnote{One also has maps from $H^{*}(\Xs)\to\IH^{*}(\Xs)$, but $\IH^{*}(\Xs)$
is different from $\IH^{*}(\cx_{\Delta})$ (e.g., when $\cx_{\Delta}$
is smooth and $\Xs$ is not).%
} As the restriction of $\IC_{\cx}$ to a fiber $X_{p}$ over $p\in U$
is $\IC_{X_{p}}[1]$, we also write $\VV^{k}(\IC_{\cx})=:\mathcal{IH}_{f}^{k+n}$,
and \begin{equation*}
\left\{
\begin{split}
\IH^{k+n}_{\lm,\sigma} &:= \psi_t \jmath_* \mathcal{IH}_f^{k+n} = \HH^k(\Xs,\ppsi_t \IC_{\cx})=H^k_{\lm,\sigma}(\IC_{\cx}) \\
\IH^{k+n}_{\van,\sigma}&:= \phi_t\jmath_* \mathcal{IH}_f^{k+n} = \HH^k(\Xs,\pphi_t \IC_{\cx})=H^k_{\van,\sigma}(\IC_{\cx}).
\end{split}
\right.
\end{equation*}

With this notation (and $\ell=k-n$), \eqref{eq1.4a}--\eqref{eq1.4d}
become \begin{equation} \label{eq1.4e}
\cdots \to \IH^k(\cx_{\Delta})\overset{\sp}{\to}\IH^k_{\lm,\sigma}(X_t)\overset{\can}{\to}\IH^k_{\van,\sigma}(X_t)\overset{\delta}{\to}\IH^{k+1}(\cx_{\Delta})\to\cdots ,
\end{equation}\begin{equation} \label{eq1.4f}
0\to\IH^{k-2}_{\lm,\sigma}(X_t)_{T_{\sigma}}(-1)\overset{\sp^{\vee}}{\to}\IH^k_c(\cx_{\Delta})\to\IH^k(\cx_{\Delta})\overset{\sp}{\to}\IH^k_{\lm,\sigma}(X_t)^{T_{\sigma}}\to 0,
\end{equation}\begin{equation} \label{eq1.4g}
\IH^{n-k}_{\lm,\sigma}\overset{\cong}{\underset{h^k}{\to}}\IH^{n+k}_{\lm,\sigma}(k),\;\;\;\;\;\;\;\;\;\;\IH^{n-k+1}_{\pha,\sigma}\overset{\cong}{\underset{h^k}{\to}}\IH_{\pha,\sigma}^{n+k+1}(k),
\end{equation}\begin{equation} \label{eq1.4h}
\IH^k(\cx)\cong H^0(U,\mathcal{IH}^k_f)\oplus\{\IH^1(\cs,\mathcal{IH}^{k-1}_f)\oplus\bigoplus_{\sigma}\IH^k_{\pha,\sigma}\}\oplus H^2_c(U,\mathcal{IH}^{k-2}_f).
\end{equation}Note that $\ker(\can)=\IH_{\lm,\sigma}^{k}(X_{t})^{T_{\sigma}}$,
and \eqref{eq1.4e} splits at all but the $\IH_{\lm}$ terms.
\begin{rem}
To compute $\IH^k(\cx_{\Delta})$ (say, for use in \eqref{eq1.4f}) one needs to know $\ci_{\sigma}^* \IC_{\cx}$, which was studied by Dimca and Saito in \cite{DSa}.  In general, $\ci_{\sigma}^*\IC_{\cx}[-1]$ is perverse, and there is a map $c_{\sigma}\colon \QQ_{X_{\sigma}}[n]\to \ci_{\sigma}^*\IC_{\cx}[-1]$ with kernel $\ph^{-1}\QQ_{\cx}[n+1]$ and cokernel $W_n \ph^0 \QQ_{\cx}[n+1]$.  If $\QQ_{\cx}[n+1]$ (hence $\QQ_{X_{\sigma}}[n]$) is perverse (e.g. if $\cx$ has local complete intersection singularities) then the kernel vanishes and $[c_{\sigma}]\colon H^k(X_{\sigma})\to \IH^k(\cx_{\Delta})$ is injective on the top $\gr^W_k$.  (See also Example \ref{ex2.1} and Remark \ref{rem1.7}.)  If $\ci_{\sigma}^* \IC_{\cx} [-1]=\QQ_{X_{\sigma}}[n]$, then $\IC_{\cx}=\QQ_{\cx}[n+1]$ and one just uses \eqref{eq1.4j} instead of \eqref{eq1.4f}.
\end{rem}
If $\cx$ is smooth, then $\IC_{\cx}=\QQ_{\cx}[n+1]$ and we simply
replace $\IH$ resp. $\mathcal{IH}_{f}$ everywhere by $H$ resp.
$\mathcal{H}_{f}$, except for the parabolic cohomology group $\IH^{1}(\cs,\ch_{f}^{k-1})$
in \eqref{eq1.4h}. The rank of the latter may be computed by the
\emph{Euler-Poincar\'e formula}, which reads \begin{equation} \label{eq1.4i}
\mathrm{rk}(\IH^1(\cs,\VV))=\sum_{\sigma\in\Sigma}\mathrm{rk}(\VV_v/\VV_v^{T_{\sigma}})-\chi(\cs)\cdot \mathrm{rk}(\VV_v)+h^1(\cs)\cdot \mathrm{rk}(\VV_c),
\end{equation}if $\VV=\VV_{c}\oplus\VV_{v}$ is the splitting into fixed (constant)
and variable parts for the local system underlying a PVHS. Next, for the two exact sequences of MHS we have:
\begin{thm}\label{thMain}
For $\cx$ smooth, the Clemens--Schmid sequence reads 
\begin{multline}\label{eq1.4j}
0\to H^{k-2}_{\lm,\sigma}(X_t)_{T_{\sigma}}(-1)\overset{\sp^{\vee}}{\to}H_{2n-k+2}(\Xs)(-n-1)\\ \overset{\gy}{\to}H^k(\Xs)\overset{\sp}{\to}H^k_{\lm,\sigma}(X_t)^{T_{\sigma}}\to 0.
\end{multline}
while the vanishing cycle sequence is 
\begin{equation} \label{eq1.4k}
0\to H_{\pha,\sigma}^k \to H^k(\Xs)\overset{\sp}{\to}H^k_{\lm,\sigma}(X_t)\overset{\can}{\to}H^k_{\van,\sigma}(X_t)\to H^{k+1}_{\pha,\sigma}\to 0 .
\end{equation}
\end{thm}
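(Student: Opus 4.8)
The plan is to obtain both sequences by running the apparatus of \S\ref{S1.3}--\S\ref{SDT2} with the single coefficient object $\ck = \IC_{\cx} = \QQ_{\cx}[n+1]$. The only point to verify at the outset is that this $\ck$ is admissible there, i.e. \textbf{semisimple}: because $\cx$ is smooth, $\QQ_{\cx}[n+1]$ underlies a polarizable pure Hodge module of weight $n+1$ whose perverse sheaf is a direct sum, over the connected components of $\cx$, of the simple objects $\IC_{\cx}$. Hence Theorem \ref{th1.2} applies over the ambient (quasi-projective or complete) curve $\cs$ for which $f$ is projective, giving \eqref{eq1.3a} for $\pr^j f_*\IC_{\cx}$, and this shape persists after restriction to a small disk $\Delta_{\sigma}$ about $\sigma$ (Remark \ref{rm1.2}(iv)). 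Consequently all the identifications of \S\ref{S1.3} and both exact sequences \eqref{eq1.4a}, \eqref{eq1.4b} are available with $\ck = \IC_{\cx}$, and what remains is a translation of terms.

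For \eqref{eq1.4j} I would write out the Clemens--Schmid sequence \eqref{eq1.4b} at $\ell = k-n$ and substitute the identifications recorded just before \eqref{eq1.4e}: from $\ci_{\sigma}^{*}[-1]\IC_{\cx} = \QQ_{\Xs}[n]$ one has $H^{k-n}(\Xs,\IC_{\cx}) = H^{k}(\Xs)$; from $\VV^{k-n}(\IC_{\cx}) = \ch_f^{k}$ (the $k$-th direct image local system of the smooth part of $f$) one has $H^{k-n}_{\lm,\sigma}(\IC_{\cx}) = H^{k}_{\lm,\sigma}(X_t)$ and, in degree $k-2$, the coinvariant term $H^{k-2}_{\lm,\sigma}(X_t)_{T_{\sigma}}(-1)$; and, by Verdier duality on the smooth $(n+1)$-fold $\cx$ together with $H^{k}_{c}(\cx_{\Delta}) \cong H_{2n-k+2}(\Xs)(-n-1)$, one has $H^{k-n}_{\Xs}(\cx,\IC_{\cx}) \cong H_{2n-k+2}(\Xs)(-n-1)$, under which the middle map of \eqref{eq1.4b} becomes $\gy = \ci_{\sigma}^{*}\circ\ci_{*}^{\sigma}$. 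The surjectivity of $\sp$ onto the invariants is the local invariant cycle theorem (already visible by taking stalks in \eqref{eq1.3a}), and injectivity of $\sp^{\vee}$ is its Verdier dual; this is exactly \eqref{eq1.4j}.

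For \eqref{eq1.4k} I would start from the vanishing cycle sequence \eqref{eq1.4a} at $\ell = k-n$ --- that is, \eqref{eq1.4e} with $\IH$ replaced by $H$ and $\cx_{\Delta}$ by the homotopy-equivalent special fiber $\Xs$ --- and truncate the long exact sequence at the phantom cohomology, invoking only the \emph{definition} \eqref{eq1.3j}. Exactness identifies $\ker\{\sp\colon H^{k}(\Xs)\to H^{k}_{\lm,\sigma}(X_t)\}$ with $H^{k}_{\pha,\sigma}$, so the sequence opens $0\to H^{k}_{\pha,\sigma}\to H^{k}(\Xs)\overset{\sp}{\to}H^{k}_{\lm,\sigma}(X_t)$; exactness at the two interior spots ($\image\,\sp = \ker\,\can$ and $\image\,\can = \ker\,\delta$) is part of \eqref{eq1.4a}; and $\image\{\delta\colon H^{k}_{\van,\sigma}(X_t)\to H^{k+1}(\Xs)\} = \ker\{\sp\colon H^{k+1}(\Xs)\to H^{k+1}_{\lm,\sigma}(X_t)\} = H^{k+1}_{\pha,\sigma}$ closes it with $H^{k}_{\van,\sigma}(X_t)\to H^{k+1}_{\pha,\sigma}\to 0$. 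Since every arrow is a morphism of MHS and $H^{\bullet}_{\pha,\sigma}$ is a sub-MHS of $H^{\bullet}(\Xs)$, this is an exact sequence of MHS, namely \eqref{eq1.4k}.

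I do not expect a genuine obstacle: the substance lies in the preparatory \S\S\ref{S1.3}--\ref{SDT2}, and the only new input is that smoothness of $\cx$ forces $\QQ_{\cx}[n+1]$ to be semisimple --- the very point that fails in the merely lci case (cf. Remark \ref{rm1.2}(iii) and Example \ref{ex1.6a}), where this argument would break down. The one place that repays care is the Verdier-duality bookkeeping behind $H^{k-n}_{\Xs}(\cx,\IC_{\cx})\cong H_{2n-k+2}(\Xs)(-n-1)$ --- getting the homological degree and the Tate twist $(-n-1)$ right, and checking that under it $\sp^{\vee}$ and $\gy$ agree with the abstract maps in \eqref{eq1.4b}.
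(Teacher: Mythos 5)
Your argument is essentially the paper's own proof, unpacked: the paper notes that for $\cx$ smooth one has $\IC_{\cx}=\QQ_{\cx}[n+1]$, so that $\IH$ may be replaced by $H$ throughout \eqref{eq1.4e}--\eqref{eq1.4f}, and then identifies $H^k(\cx_{\Delta})$ with $H^k(\Xs)$ and $H^k_c(\cx_{\Delta})$ with $H_{2n-k+2}(\Xs)(-n-1)$ via the deformation retraction of $\cx_{\Delta}$ onto $\Xs$; your duality bookkeeping, the semisimplicity check for $\QQ_{\cx}[n+1]$, and the truncation of \eqref{eq1.4a} at the phantom terms are precisely the details being elided. The only cosmetic difference is that the paper cites \eqref{eq1.4e}--\eqref{eq1.4f} directly rather than re-running \eqref{eq1.4a}--\eqref{eq1.4b}, but these are the same sequences under the substitution $\ck=\IC_{\cx}$.
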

\begin{proof}
These follow directly from \eqref{eq1.4e}-\eqref{eq1.4f} since $\Xs$ is a deformation retract of $\cx_{\Delta}$.
\end{proof}

\begin{rem}
One can actually prove \eqref{eq1.4j} without invoking Theorem \ref{th1.2} using the equivalence of \textit{(a)} and \textit{(b)} in Remark \ref{rem5n} (with $\mathsf{K}=\QQ_{\cx}[n+1]$), together with C-S for semistable degenerations.  If $$\xymatrix{\cy_{\Delta} \ar[r]_{\tilde{\pi}} \ar[d]_g & \cx_{\Delta}\ar [d]^f \\ \Delta \ar [r]_{\pi} & \Delta}$$is a semistable reduction ($\pi(t)=t^{\kappa}$), then \eqref{eq1.4j} for $g$ $\implies$ $R g_* \QQ_{\cy}$ decomposes $\overset{\pi\text{ finite}}{\implies}$ $R f_* R \tilde{\pi}_* \QQ_{\cy} = R \pi_* R g_* \QQ_{\cy}$ decomposes.  Since $\QQ_{\cx}$ is a direct factor of $R\tilde{\pi}_* \QQ_{\cy}$, this also decomposes, and so \eqref{eq1.4j} holds for $f$.  (In fact, this amounts to a direct \emph{proof} of the DT in this case.) We thank M. Saito for this remark; see also Remarks A.4(iii) in his Appendix.
\end{rem}

Finally, we record two important facts about the terms in the sequences \eqref{eq1.4j}-\eqref{eq1.4k}.  For our purposes here $\text{sing}(\Xs)$ contains any ``nonreduced'' components of $\Xs$ (where $\text{ord}(t)>1$).
\begin{prop}
\label{prop1.4} For $\cx$ smooth and $d_{\text{sing}}:=\dim(\sing(\Xs))\colon$

\textbf{\emph{(i)}} $H_{\pha,\sigma}^{k}:=\ker(\sp)=\delta(H_{\van,\sigma}^{k-1}(X_{t}))=\image(\gy)$
is pure of weight $k$ \emph{(}and level $\leq k-2$\emph{)} and a
direct summand of $H^{k}(\Xs)$; and

\textbf{\emph{(ii)}} $H_{\van,\sigma}^{k}(X_{t})$ (hence $H_{\pha,\sigma}^{k+1}$)
is zero outside the range $n-d_{\text{sing}}\leq k\leq n+d_{\text{sing}}$.\end{prop}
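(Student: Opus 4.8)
For part (i), the plan is to combine the identifications already established in $\S$\ref{S1.3} with the hard Lefschetz part of the Decomposition Theorem. By \eqref{eq1.3j} and the computation following it, $H^k_{\pha,\sigma}=W^{k-1}_\sigma(\QQ_\cx[n+1])$, which is a direct summand of $H^k(\Xs)$ by the decomposition \eqref{eq1.3c}; purity then follows from Remark \ref{rm1.2}(i), since $\QQ_\cx[n+1]$ is pure of weight $n+1$ when $\cx$ is smooth, so $W^{k-1-n}(\QQ_\cx[n+1])$ (indexing by the shift $\ell=k-n$) carries a pure Hodge structure of the appropriate weight, which after the degree bookkeeping is weight $k$. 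The level bound $\leq k-2$ is the nontrivial point: I would obtain it from the hard Lefschetz isomorphisms \eqref{eq1.4c}, which force $H^k_{\pha,\sigma}$ to be a Tate-twisted copy of $H^{-k+2n+2}_{\pha,\sigma}$ sitting in the image of $\gy$, together with the fact — visible from \eqref{eq1.3i} and the discussion of phantom cohomology — that $\gy$ factors through $H^{k+n}(\cx_\Delta)$ and lands in a subspace of Hodge-theoretic level at most $k-2$. Concretely, the phantom part is the image of $\IH^{k}_c(\cx_\Delta)\to\IH^k(\cx_\Delta)$ restricted along the monodromy-invariant/constant-system decomposition, and the constant-system contribution is exactly what produces the Lefschetz-type degree shift responsible for the drop in level; I would make this precise by tracking the weight filtration through the primitive decomposition coming from \eqref{eq1.2b}.

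For part (ii), the key input is a support bound on the vanishing cycle sheaf. The idea is that $\pphi_t\QQ_\cx[n+1]$ is supported on $\sing(\Xs)$ (including the nonreduced locus, per the warning after \eqref{eq1.3a}), a fact which is standard for the vanishing cycle functor: if $\Xs$ is reduced and smooth at a point then $\phi_t$ vanishes there, and the nonreduced components are included in the support by the convention in force. Once one knows $\supp(\pphi_t\QQ_\cx[n+1])\subseteq\sing(\Xs)$, which has dimension $d_{\sing}$, perversity of $\pphi_t\QQ_\cx[n+1]$ (as an object of $\mathrm{MHM}(\Xs)$, using $\pphi_t=\phi_t[-1]$) forces its hypercohomology $\HH^\bullet(\Xs,\pphi_t\QQ_\cx[n+1])$ to be concentrated in the interval $[-d_{\sing},d_{\sing}]$: a perverse sheaf on a space of dimension $\leq d_{\sing}$ has cohomology in degrees $[-d_{\sing},d_{\sing}]$ by the support and cosupport conditions. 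Translating back through \eqref{eq1.3f}, namely $H^k_{\van,\sigma}(X_t)=\HH^{k-n}(\Xs,\pphi_t\QQ_\cx[n+1])$ after the shift, puts the nonvanishing range at $n-d_{\sing}\leq k\leq n+d_{\sing}$; the claim about $H^{k+1}_{\pha,\sigma}$ then follows from Proposition \ref{prop1.4}(i), which identifies it with $\delta(H^k_{\van,\sigma}(X_t))$.

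The main obstacle I anticipate is the level bound $\leq k-2$ in part (i). Purity and the direct-summand statement are essentially formal consequences of the machinery already set up, but controlling the Hodge-theoretic level requires genuinely using the hard Lefschetz structure — specifically, that the phantom class, being in the image of $\gy=\ci_\sigma^*\circ\ci_*^\sigma$ and simultaneously (by \eqref{eq1.4c}) a Tate twist of a phantom class in complementary degree, must be "Lefschetz-decomposable" in a way that eats two units of level. I would handle this by arguing that $H^k_{\pha,\sigma}$, as a summand of $H^k(\Xs)$ whose classes lift to (and are killed by restriction from) $\cx_\Delta$, consists of classes pulled back from lower-dimensional strata via the $\IC$-decomposition \eqref{eq1.2a}, forcing the Hodge numbers $h^{p,q}$ with $p=0$ or $q=0$ to vanish. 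A cleaner alternative, if the direct argument gets unwieldy, is to invoke \eqref{eq1.4d}/\eqref{eq1.4h}: the phantom part sits inside $\bigoplus_\sigma W_\sigma$, i.e. the $i=0$ graded piece of perverse Leray, and a weight/Lefschetz count on that middle piece against the outer ($i=\pm1$) pieces — which carry the level-$k$ and level-$k$ Tate pieces — shows the middle piece has level $\leq k-2$.
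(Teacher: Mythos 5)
Your part (ii) argument is correct and essentially identical to the paper's: support of $\pphi_t\QQ_\cx[n+1]$ in $\sing(\Xs)$ plus perversity gives the degree range $[-d_{\sing},d_{\sing}]$ for hypercohomology; the paper just phrases it via the hypercohomology spectral sequence, but the content is the same. Your part (i) argument for purity and the direct-summand claim via the decomposition \eqref{eq1.3c} (identifying $H^k_{\pha,\sigma}$ with $W^{k-n-1}_\sigma(\QQ_\cx[n+1])$, weight $k$ by Remark \ref{rm1.2}(i)) is also fine; the paper instead observes that $\gy$ is a morphism from a MHS of weights $\geq k$ to one of weights $\leq k$, so its image is pure of weight $k$ and split, but either route works.

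The genuine gap is the level bound $\leq k-2$ in part (i). Your primary proposal — use hard Lefschetz \eqref{eq1.4c} plus a ``Lefschetz-decomposable'' argument through the primitive decomposition of \eqref{eq1.2b} — is never made precise, and as stated it is circular: you assert that $\gy$ ``lands in a subspace of Hodge-theoretic level at most $k-2$'', but that is exactly what needs to be proved. The alternative weight/Lefschetz count at the end is equally hand-wavy. Crucially, no Lefschetz input is needed at all. The paper's argument is a pure dimension/weight count: $\gy$ factors through $\gr^W_k\bigl(H_{2n-k+2}(\Xs)(-n-1)\bigr)$, whose complexification is dual (up to a Tate twist preserving level) to $\gr^W_{2n-k+2}H^{2n-k+2}(\Xs)_\CC$. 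Since $\Xs$ is compact of dimension $n$, one has $h^{p,q}(\Xs)=0$ unless $0\leq p,q\leq n$, so a weight-$(2n-k+2)$ graded piece is supported in $n-k+2\leq p,q\leq n$, hence has level $\leq k-2$. That two-line observation replaces the whole Lefschetz discussion and is the missing step in your proof.
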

\begin{proof}
\emph{(i)}  The MHS $\mathsf{H}_1 := H_{2n-k+2}(\Xs)(-n-1)$ has weights $\geq k$, while $\mathsf{H}_2 := H^{k}(\Xs)$ has weights $\leq k$.
Therefore $\gy\colon \mathsf{H}_1 \to \mathsf{H}_2$ is split and factors through $\gr^W_k \mathsf{H}_1$, which (with complexification dual to $\gr^W_{2n-k+2} H^{2n-k+2}(\Xs)_{\CC} = H^{n,n-k+2}(\Xs)\oplus \cdots \oplus H^{n-k+2,n}(\Xs)$) has level $\leq k-2$.

\emph{(ii)}  As $\pphi_{t}\QQ_{\cx}[n+1]$ is supported on $\sing(\Xs)$,
its perversity implies the existence of a stratification $\hat{\str}_{\bullet}$
($\dim\hat{\str}_{q}=q$) such that the cohomology sheaves $\mathcal{H}^{j}(\pphi_{t}\QQ_{\cx}[n+1]|_{\hat{\str}_{q}\backslash\hat{\str}_{q-1}})$
vanish unless $-d_{\text{sing}}\leq j\leq-q$. Hence in the hypercohomology
spectral sequence 
\[
E_{2}^{i,j}=\HH^{i}(\sing(\Xs),\mathcal{H}^{j}(\pphi_{t}\QQ_{\cx}[n+1]))\,\implies\, H_{\van,\sigma}^{i+j+n}(X_{t}),
\]
all nonzero terms lie in $\{i\geq0,\, i+2j\leq0,\, j\geq-d_{\text{sing}}\}$,
so $E_{\infty}^{i,j}=0$ outside $-d_{\text{sing}}\leq i+j\leq d_{\text{sing}}$.\end{proof}
\begin{rem}
\label{rem1.4a} More generally, part (ii) and \eqref{eq1.4k} hold
(for $H_{\text{van}}^{k}$ only) if $\cx$ has local complete intersection
singularities since then $\QQ_{\cx}[n+1]$ is still perverse. (Note
that $\dim(\text{sing}(\cx))\leq d_{\text{sing}}$.) This is because
the derivation of \eqref{eq1.4a} made no use of the Decomposition
Theorem.\end{rem}
\begin{cor}
\label{cor1.4}If $\cx$ is smooth, then $\gr_{F}^{0}H^{k}(\Xs)\cong\gr_{F}^{0}H_{\lm}^{k}(X_{t})^{T_{\sigma}}$
and $W_{k-1}H^{k}(\Xs)\cong W_{k-1}H_{\lm}^{k}(\Xs)^{T_{\sigma}}$. 
\end{cor}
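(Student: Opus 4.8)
The plan is to read off both isomorphisms from the Clemens--Schmid sequence \eqref{eq1.4j} of Theorem \ref{thMain} together with the purity/level statement of Proposition \ref{prop1.4}(i). First I would split \eqref{eq1.4j} at $H^k(\Xs)$: since $\ker(\sp)=\image(\gy)=H^k_{\pha,\sigma}$ (by \eqref{eq1.3j} and Proposition \ref{prop1.4}(i)) and $\sp$ is surjective onto the invariants, \eqref{eq1.4j} yields a short exact sequence of mixed Hodge structures
$$0 \to H^k_{\pha,\sigma} \to H^k(\Xs) \overset{\sp}{\to} H^k_{\lm,\sigma}(X_t)^{T_\sigma} \to 0.$$

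Next I would pass to graded pieces. Because morphisms of mixed Hodge structures are strict for both $F$ and $W$, the functors $\gr_F^0(-)$ and $W_{k-1}(-)$ are exact on $\MHS$; applying them to the displayed sequence gives exact sequences
$$0 \to \gr_F^0 H^k_{\pha,\sigma} \to \gr_F^0 H^k(\Xs) \to \gr_F^0 H^k_{\lm,\sigma}(X_t)^{T_\sigma} \to 0$$
and
$$0 \to W_{k-1} H^k_{\pha,\sigma} \to W_{k-1} H^k(\Xs) \to W_{k-1} H^k_{\lm,\sigma}(X_t)^{T_\sigma} \to 0.$$

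Finally I would invoke Proposition \ref{prop1.4}(i): $H^k_{\pha,\sigma}$ is pure of weight $k$, so $W_{k-1} H^k_{\pha,\sigma}=0$ and the second sequence collapses to the weight isomorphism; and $H^k_{\pha,\sigma}$ has level $\leq k-2$, which forces $\gr_F^0 H^k_{\pha,\sigma}=0$ (for a weight-$k$ Hodge structure $\gr_F^0$ is the $(0,k)$-part, and this vanishes once the level is $<k$), whence the first sequence gives the Hodge isomorphism. There is no serious obstacle here: the content lies entirely in Theorem \ref{thMain} and Proposition \ref{prop1.4}(i), and the only points needing (standard) care are the exactness of $\gr_F^0$ and $W_{k-1}$ on $\MHS$, i.e. Deligne's strictness, together with the translation of the ``level $\leq k-2$'' bound into $\gr_F^0 H^k_{\pha,\sigma}=0$.
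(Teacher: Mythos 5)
Your argument is exactly the intended one: the Corollary is stated without proof in the paper, and it is meant to follow directly from the short exact sequence $0\to H^k_{\pha,\sigma}\to H^k(\Xs)\overset{\sp}{\to}H^k_{\lm,\sigma}(X_t)^{T_\sigma}\to 0$ extracted from \eqref{eq1.4j}, together with Proposition \ref{prop1.4}(i) and strictness of MHS morphisms. The only small imprecision is in your final sentence: Proposition \ref{prop1.4}(i) gives level $\leq k-2$, not merely $<k$, but for a pure weight-$k$ Hodge structure these are equivalent (the parity of $p-q$ matches that of $k$), so the conclusion $\gr_F^0 H^k_{\pha,\sigma}=0$ is correct either way.
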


\section{Decomposition Theorem over a curve (3): Examples}\label{sec-examples}

If $\cx$ is smooth and $\Xs$ has only isolated singularities ($d_{\text{sing}}=0$),
then by Proposition \ref{prop1.4}(ii) the vanishing cycle sequence becomes
\small \begin{equation} \label{eq1.5a}
0\to H^n(\Xs)\overset{\sp}{\to}H^n_{\lm,\sigma}(X_t)\overset{\can}{\to}H^n_{\van,\sigma}(X_t)\overset{\delta}{\to}H^{n+1}(\Xs)\overset{\sp}{\to}H^{n+1}_{\lm,\sigma}(X_t)\to 0
\end{equation}\normalsize with $H^{k}(\Xs)\overset{\cong}{\to}H_{\lm,\sigma}^{k}(X_{t})$ in
all other degrees ($k\neq n,n+1$). In particular, $T_{\sigma}=I$
on $H_{\lm,\sigma}^{k}(X_{t})$ for $k\neq n$, while Clemens--Schmid
reduces $ $to \begin{equation} \label{eq1.5b}
\left\{
\begin{split}
0\to &H^{n-1}_{\lm,\sigma}(X_t)\to  H_{n+1}(\Xs)({-n}{-1})\to H^{n+1}(\Xs)\to H^{n+1}_{\lm,\sigma}(X_t)\to 0 \\
&\text{and} \mspace{100mu}H^n(\Xs)\cong H^n_{\lm,\sigma}(X_t)^{T_{\sigma}}.
\end{split}
\right.
\end{equation}
These sequences are also valid when $n=1$ and the curve $\Xs$ has nonreduced components:  certainly $H_{-1}(\Xs)(-2)\overset{\gy}{\to}H^1(\Xs)$ is zero and (assuming $f$ connected) $T_{\sigma}=I$ on $H^2_{\lm}$ and $H^0_{\lm}$, which gives \eqref{eq1.5b} hence \eqref{eq1.5a}. 

We illustrate \eqref{eq1.5a}--\eqref{eq1.5b} for two simple examples,
then relate $H_{\van}$ to ``tails'' appearing in the semistable
reduction process.
\begin{example}
\label{ex1.5a} Let $\cx\overset{f}{\to}\PP^{1}$ be a smooth minimal
elliptic surface with section, and singular fibers of types $2\mathrm{I}_{1}$,
$\mathrm{I}_{6}^{*}$, $\mathrm{II}$, and $\mathrm{IV^{*}}$ (e.g.,
obtained from base-change and quadratic twist of the elliptic modular
surface for $\Gamma_{1}(3)$). These have $m_{\sigma}=1$, $11$,
$1$, resp. $7$ components, with $H_{\pha,\sigma}^{2}\cong\QQ(-1)^{\oplus(m_{\sigma}-1)}$;
and $\deg(\mathcal{H}_{f,e}^{1,0})=\frac{1}{12}(2\cdot1+12+2+8)=2$
$\implies$ $X$ $K3$.  (Here $\mathcal{H}^{1,0}_{f,e}$ is Deligne's canonical extension of $\mathcal{H}^{1,0}_f:=R^1 f^{\text{sm}}_* \Omega^1_{\mathcal{X}}$ to $\PP^1$; see \cite[\S III]{GGK09} for the contributions of the Kodaira singular fiber types to its degree.)  In \eqref{eq1.4h}, the end terms are generated
by the class of the (zero-)section and a fiber, while $\IH^{1}(\PP^{1},\ch_{f}^{1})$
has rank $4$ hence Hodge numbers $(1,2,1)$. (This rank comes either
from Euler-Poincar\'e or from subtracting the Picard rank $\rho=2+\sum(m_{\sigma}-1)=18$
from $22$.)

The Hodge-Deligne diagrams for the first three terms of \eqref{eq1.5a}
($n=1$) are well-known for each of these four degenerations.  We display them in Figure \ref{fig1},
writing numbers for $h^{p,q}\neq1$, eigenvalues $\neq1$ of $T_{\sigma}^{\mathrm{ss}}$ in braces, 
and $N:=\log(T_{\sigma}^{\mathrm{un}})$.
\begin{figure}
\includegraphics[scale=0.7]{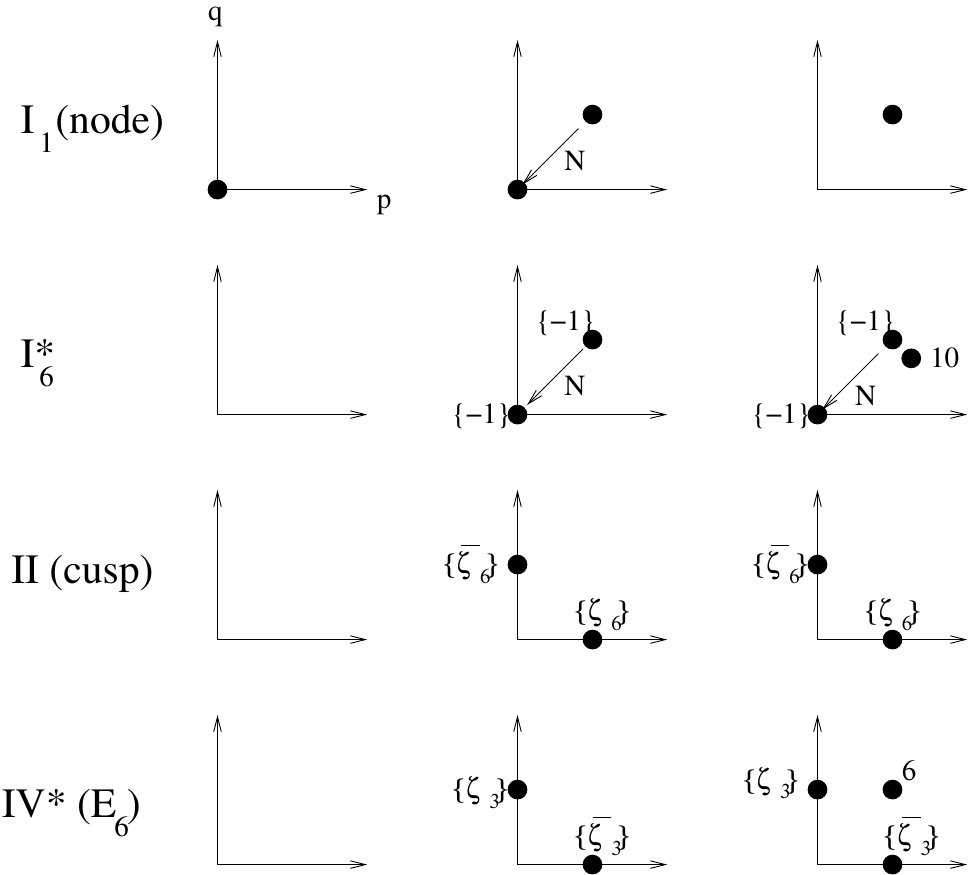}
\caption{Hodge-Deligne diagrams for Example \ref{ex1.5a}}\label{fig1}
\end{figure}
\end{example}

\begin{example}
\label{ex1.5b} Let $\cx_{\Delta}\overset{f}{\to}\Delta$ be a family
of $K3$ surfaces acquiring a single $\tilde{E}_{8}$ singularity:
locally, $f\sim x^{2}+y^{3}+z^{6}+\lambda xyz$. Then all $H_{\pha,0}^{k}$
are zero, and the first three terms of \eqref{eq1.5a} ($n=2$) are displayed in Figure \ref{fig2}.
\begin{figure}
\includegraphics[scale=0.7]{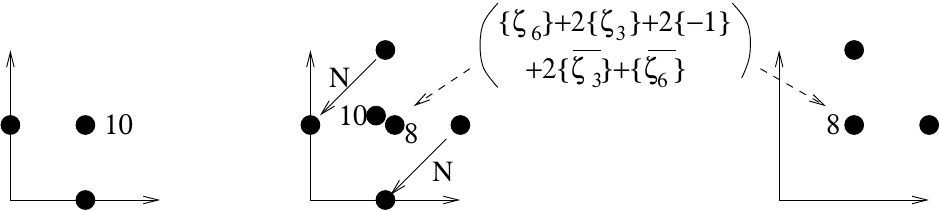}
\caption{Hodge-Deligne diagrams for Example \ref{ex1.5b}}\label{fig2}
\end{figure}
If $\fx_{\Delta}$ is the (singular) base-change by $t\mapsto t^{6}$,
then these terms are unchanged except that the action of $T^{\mathrm{ss}}$
trivializes -- which means that \eqref{eq1.4j} now \emph{fails}.
(As we shall see explicitly in Example \ref{ex1.6a}, $\QQ_{\fx_{\Delta}}[3]$
is not semisimple.) Of course \eqref{eq1.4e}--\eqref{eq1.4f} still
apply: in particular, $\IH^{2}(\fx_{\Delta})\cong H^{2}(X_{0})\oplus\QQ(-1)^{\oplus8}$
and $\IH_{\pha,0}^{2}=\{0\}$. 

On the other hand, performing a weighted blow-up of $f+t^{6}=0$ at
the origin yields the semistable/slc model $\cy_{\Delta}\overset{t}{\to}\Delta$,
with $Y_{0}=\tilde{X}_{0}\cup\cE$ and $\tilde{X}_{0}\cap\cE=:E$.
Here $E=\{f=0\}\subset\PP[1:2:3]$ is an elliptic curve and the ``tail''
$\cE=\{f+t^{6}=0\}\subset\PP[1:1:2:3]$ a del Pezzo surface of degree
$1$ ($\rho=9$). As we have seen in Example \ref{ex1.1}, the extension
class of $H^{2}(\tilde{X}_{0})$ by $H^{1}(E)$ in 
\[
0\to H^{1}(E)\to H^{2}(Y_{0})\to H^{2}(\tilde{X}_{0})\oplus H^{2}(\cE)\to H^{2}(E)\to0
\]
(i.e. in $H^{2}(X_{0})$) can be nontorsion. However, that of $\ker(H^{2}(\cE)\to H^{2}(E))$
by $H^{1}(E)$ is torsion due to the eigenspace decomposition under
the \emph{original} $T^{\mathrm{ss}}$ and the fact that we have not
altered $H_{\lm}^{1}$ in \eqref{eq1.5b}. This will change if we
take a more general pullback of the form
\[
t^{6}+a_{5}t^{5}z+t^{4}(a_{4}z^{2}+b_{4}y)+\cdots+x^{2}+y^{3}+z^{6}+\lambda xyz=0,
\]
as then $t\mapsto\zeta_{6}t$ no longer induces an automorphism of
$H^{2}(\cE,E)$.
\end{example}
We briefly explain how the relation between the ``tail'' $(\cE,E)$
and the vanishing cohomology generalizes for isolated singularities.
Consider the scenario \begin{equation}\label{eq1.5c}
\xymatrix{&\Delta\ar@{=}[r]&\Delta\ar[r]^{(\cdot)^{\kappa}}&\Delta\\&\cy \ar[u]^F \ar[r]^{\pi}&\fx \ar[u]\ar[r]^{\rho}&\cx\ar[u]^f\\ \tilde{X}_0 \cup\cE\ar@{=}[r]&Y_0 \ar@{^(->}[u]^{\tilde{\ci}}\ar[r]^{\pi_0}&X_0\ar@{^(->}[u]^{\ci}\ar@{=}[r]&X_0\ar@{^(->}[u]\\ \tilde{X}_0\cap \cE\ar@{=}[r]&E\ar@{^(->}[u]\ar[r]&\{p\}\ar@{^(->}[u]^{\imath_p}}
\end{equation}where $\cx,\cy,\tilde{X}_{0}$ are smooth, $X_{0}=(f)$ is reduced
and irreducible, $p\in\sing(X_{0})=\sing(\fx)$ is isolated, $\rho$
is cyclic base-change and $F$ is semistable ($\implies Y_{0}$ SNCD).
First we look at the case of $\cE,E$ irreducible and smooth:
\begin{prop}
\label{prop1.5a} As a mixed Hodge structure, $H_{\van}^{k}(X_{t})$
is the reduced cohomology $\tilde{H}^{k}(\cE\backslash E)$, and this
vanishes for $k\neq n$.\end{prop}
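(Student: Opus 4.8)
The plan is to compute $H^{k}_{\van}(X_{t})$ as a stalk of a vanishing cycle complex, and then transport the resulting mixed Hodge structure onto $\cE\setminus E$ through the semistable model of \eqref{eq1.5c}. First I would observe that, since $\cx$ is smooth and $\sing(X_{0})=\{p\}$, the complex $\pphi_{t}\QQ_{\cx}[n+1]$ is supported at the single point $p$, so that $H^{k}_{\van}(X_{t})=\HH^{k-n}(X_{0},\pphi_{t}\QQ_{\cx}[n+1])$ reduces to the stalk cohomology $\mathcal{H}^{k-n}(\pphi_{t}\QQ_{\cx}[n+1])_{p}$; its underlying $\QQ$-vector space is the reduced cohomology $\widetilde{H}^{k}(M_{p})$ of the Milnor fibre, carrying the Saito--Steenbrink mixed Hodge structure. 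The vanishing for $k\neq n$ is then immediate, since an isolated hypersurface singularity of the $n$-fold $X_{0}$ has Milnor fibre homotopy equivalent to a wedge of $n$-spheres. It thus remains to identify this mixed Hodge structure with $\widetilde{H}^{k}(\cE\setminus E)$.

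For that I would pass to the semistable model. The cyclic base change $\rho$ merely reparametrises the disk direction and leaves the germ $(X_{0},p)$ --- hence the Saito--Steenbrink mixed Hodge structure on its vanishing cohomology --- unchanged, so we may replace $f$ by $s\colon\fx\to\Delta$ and work with $\pphi_{s}\QQ_{\fx}[n+1]$, still supported at $p$. Now bring in $\pi\colon\cy\to\fx$, an isomorphism away from $p$ with $\pi^{-1}(p)=\cE$, and $F=s\circ\pi$ semistable with $Y_{0}=\widetilde{X}_{0}\cup\cE$ and double locus $E$. Three standard inputs enter: (i) the cone $C$ of the adjunction map $\QQ_{\fx}[n+1]\to R\pi_{*}\QQ_{\cy}[n+1]$ is supported at $p$ and equal there, by proper base change, to the reduced cochain mixed Hodge structure of $\cE$, i.e. $\mathcal{H}^{j}(C)_{p}=\widetilde{H}^{j+n+1}(\cE)$; (ii) $\pphi_{s}$ commutes with the proper pushforward $R\pi_{*}$ along $F$; (iii) for the semistable $F$, $\pphi_{F}\QQ_{\cy}[n+1]$ is supported on $E$, where it equals $\QQ_{E}(-1)[n-1]$ --- the classical computation of vanishing cycles along the double stratum of a normal crossings degeneration. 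By (ii)--(iii), $\pphi_{s}R\pi_{*}\QQ_{\cy}[n+1]=R\pi_{*}\pphi_{F}\QQ_{\cy}[n+1]$ has stalk $R\Gamma(E,\QQ)(-1)[n-1]$ at $p$, while $\pphi_{s}$ fixes $C$ since $C$ is supported on the central fibre. Applying $\pphi_{s}$ to the triangle $\QQ_{\fx}[n+1]\to R\pi_{*}\QQ_{\cy}[n+1]\to C\xrightarrow{+1}$ and taking $\mathcal{H}^{\bullet}$ at $p$ then produces a long exact sequence of mixed Hodge structures
\[
\cdots\to H^{k}_{\van}(X_{t})\to H^{k-1}(E)(-1)\overset{\gamma}{\longrightarrow}\widetilde{H}^{k+1}(\cE)\to H^{k+1}_{\van}(X_{t})\to\cdots .
\]

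I would then recognise this as the Gysin sequence of the pair $(\cE,E)$. The Gysin triangle $\imath_{E*}\QQ_{E}(-1)[-2]\to\QQ_{\cE}\to Rj_{U*}\QQ_{U}\xrightarrow{+1}$ (with $U=\cE\setminus E$ and $\imath_{E},j_{U}$ the inclusions), pushed to a point and suitably shifted, is a long exact sequence of mixed Hodge structures with exactly the same terms $H^{k-1}(E)(-1)$, $\widetilde{H}^{k+1}(\cE)$, $\widetilde{H}^{k+1}(U)$, and connecting map the Gysin push-forward $\gamma'$. It remains to check $\gamma=\gamma'$: then the two long exact sequences are isomorphic, compatibly on the shared $E$- and $\cE$-terms, and the five lemma in the abelian category of mixed Hodge structures gives $H^{k}_{\van}(X_{t})\cong\widetilde{H}^{k}(\cE\setminus E)$, finishing the proof. (This also makes transparent that $\widetilde{H}^{k}(\cE\setminus E)$ vanishes off degree $n$, consistent with $\cE\setminus E$ being the complement of the ample divisor $E$ --- ampleness coming from $N_{\cE/\cy}\cong\co_{\cE}(-E)$, itself forced by $\co_{\cy}(Y_{0})\cong\co_{\cy}$.)

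I expect the comparison $\gamma=\gamma'$ to be the main obstacle. Concretely it asks that the canonical generator of $\pphi_{F}\QQ_{\cy}|_{E}\cong\QQ_{E}(-1)$ --- the vanishing cycle class of the node smoothing --- push forward in $\widetilde{H}^{\bullet}(\cE)$ to the Thom class of $E$ in $\cE$. A cleaner route might be to build directly a morphism between the two defining triangles using the open inclusion $\cE\setminus E\hookrightarrow\cy\setminus\sing(Y_{0})$, over which $F$ is a trivial family, so that the $Rj_{U*}\QQ_{U}$-datum maps naturally into $\pphi_{s}$; rendering this compatibility functorial is then the delicate point. One can also argue purely topologically that the Clemens collapse of the local semistable family retracts $M_{p}$ onto the closure of its slice over $\cE\setminus E$, hence onto $\cE\setminus E$ itself; but upgrading such a homotopy equivalence to an isomorphism of mixed Hodge structures reduces once more to the sheaf-level comparison above.
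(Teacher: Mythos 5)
Your proof follows essentially the same path as the paper's. Both proceed by applying $\pphi_{t}$ to the distinguished triangle $\cone(\alpha)[-1]\to\QQ_{\fx}[n+1]\overset{\alpha}{\to}R\pi_{*}\QQ_{\cy}[n+1]$, and both rely on the same three inputs: that $\pphi_{t}$ of a complex supported on $X_{0}$ is just its restriction to $X_{0}$, that cyclic base-change does not change the vanishing cycle complex, and that $\pphi_{F}\QQ_{\cy}$ for the semistable $F$ is $\QQ_{E}(-1)[-2]$ along the double locus. The differences are in the final processing. The paper stays at the cone level: it identifies $\pphi_{t}\QQ_{\cx}[n+1]$ with $(\imath_{p})_{*}$ of the two-step cone $\cone(R\Gamma\QQ_{E}(-1)[-2]\oplus\QQ\to R\Gamma\QQ_{\cE})[n]$, re-brackets it via the Gysin triangle for $(\cE,E)$ as $\cone(\QQ\to R\Gamma\QQ_{\cE\setminus E})[n]$, and then deduces concentration in degree $n$ from perversity of a complex supported at a point. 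You instead pass to hypercohomology early, match the resulting long exact sequence with the Gysin sequence of $(\cE,E)$, and invoke the five lemma; and you obtain the vanishing off degree $n$ from Milnor's wedge-of-spheres theorem, which is a genuinely different (and more elementary, topological) route than the paper's perversity argument.

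The step you flag as the main obstacle --- identifying the connecting map $\gamma$ with the Gysin pushforward $\gamma'$ --- is exactly what the paper's first ``$\simeq$'' tacitly presumes: to rewrite $\cone(R\Gamma\QQ_{E}(-1)[-2]\oplus\QQ\to R\Gamma\QQ_{\cE})$ as $\cone(\QQ\to R\Gamma\QQ_{\cE\setminus E})$, one needs to know that the map emanating from the $E$-term is the Gysin map. So your proof and the paper's share this point, and the paper simply treats it as immediate; the comparison of triangles you sketch through the open inclusion $\cE\setminus E\hookrightarrow\cy\setminus\sing(Y_{0})$ is a reasonable way to nail it down for either version. In short, your argument is correct in outline and essentially reproduces the paper's, with the five-lemma reformulation, the Milnor-theorem vanishing, and a more candid flagging of the Gysin-comparison step being the only substantive departures.
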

\begin{proof}
Applying $\pphi_{t}$ to the distinguished triangle in $D_{c}^{b}(\fx)$
\[
\cone(\alpha)[-1]\to\QQ_{\fx}[n+1]\overset{\alpha}{\to}R\pi_{*}\QQ_{\cy}[n+1]\overset{+1}{\to}
\]
yields a triangle in $D_{c}^{b}(X_{0})$ with terms: \begin{flalign*}
\bullet \; \pphi_{t}\cone(\alpha)[-1] &=\ci^{*}\cone(\alpha)[-1]=\cone(\ci^{*}\QQ_{\fx}[n]\to R\pi_{0*}\tilde{\ci}^{*}\QQ_{\cy}[n]) &&\\ &= \cone(\QQ_{X_{0}}[n]\to R\pi_{0*}\QQ_{Y_{0}}[n])&&\\
&=(\imath_{p})_{*}\cone(\QQ[n]\to R\Gamma\QQ_{\cE}[n])&&
\end{flalign*}using the fact that $\ppsi_{t}(\alpha)$ is an isomorphism;\begin{flalign*}
\bullet\;\pphi_{t}\QQ_{\fx}[n+1]&=\pphi_{t}\QQ_{\cx}[n+1]&&
\end{flalign*}since the base-change doesn't affect the first vanishing cycle triangle; and \begin{flalign*}
\bullet\;\pphi_{t}R\pi_{*}\QQ_{\cy}[n+1]&= R\pi_{0*}\pphi_{t}\QQ_{\cy}[n+1]=R\pi_{0*}\QQ_{E}(-1)[n-1]&&\\
&=\imath_{p*}R\Gamma\QQ_{E}(-1)[n-1],&&
\end{flalign*}where we used the fact that $\pphi_{t}$ of the constant sheaf is
$\QQ(-1)[-2]$ at a node (Example \ref{ex1.5a}). It is immediate
that $\pphi_{t}\QQ_{\cx}[n+1]$ is $(\imath_{p})_{*}$ of \begin{flalign*}
\cone (R\Gamma \QQ_E (-1)[-2]\oplus\QQ \to R\Gamma\QQ_{\cE})[n]&\simeq \cone(\QQ\to R\Gamma \QQ_{\cE\setminus E})[n] \\ &\simeq \oplus_k \tilde{H}^k (\cE\setminus E)[n-k];
\end{flalign*}which being perverse must vanish outside degree $0$.
\end{proof}
In the more general case where $\cE$ is a \emph{union} of smooth
$\{\cE_{i}\}_{i=1}^{e}$, $H_{\van}^{k}(X_{t})$ is still $0$ for
$k\neq n$ by Proposition \ref{prop1.4}(ii), but $H_{\van}^{n}(X_{t})$
is not as straightforward as in Proposition \ref{prop1.5a}. To see
what one \emph{can} say, write $\cE_{0}:=\tilde{X}_{0}$, $\cE_{I}:=\cap_{i\in I}\cE_{i}$,
$\cE_{i}^{*}:=\cE_{i}\backslash(\cup_{j\in\{0,\ldots,e\}\backslash\{i\}}\cE_{j})$,
\[
\cE^{[k]}:=\amalg_{\tiny\begin{array}{c}
I\subset\{1,\ldots,e\}\\
|I|=k+1
\end{array}}\cE_{I}\subset\amalg_{\tiny\begin{array}{c}
I\subset\{0,\ldots,e\}\\
|I|=k+1
\end{array}}\cE_{I}=:Y_{0}^{[k]}\overset{\ci^{[k]}}{\longrightarrow}Y_{0},
\]
and $\mu:=\dim(H_{\van}^{n}(X_{t}))$ for the \emph{Milnor number}.
\begin{thm}
\label{prop1.5b} \textbf{\emph{(i)}} The associated graded $\gr_{\bullet}^{W}H_{\van}^{n}(X_{t})$
is a subquotient of $$\bigoplus_{k=0}^{n}\left\{ H^{n-k}(\cE^{[k]})\oplus\left(H^{n-k}(Y_{0}^{[k]})\otimes\tilde{H}^{*}(\PP^{k})\right)\right\}.$$

\textbf{\emph{(ii)}} $\mu=(-1)^{n}\left\{ -1+\sum_{i=1}^{e}\chi(\cE_{i}^{*})\right\} .$\end{thm}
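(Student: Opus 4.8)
The plan is to extend the argument proving Proposition \ref{prop1.5a} to a reducible tail $\cE=\bigcup_{i=1}^{e}\cE_{i}$. Recall the mechanism there: in the notation of \eqref{eq1.5c}, applying $\pphi_{t}$ to the distinguished triangle $\cone(\alpha)[-1]\to\QQ_{\fx}[n+1]\xrightarrow{\alpha}R\pi_{*}\QQ_{\cy}[n+1]\xrightarrow{+1}$ on $\fx$, and using---exactly as in that proof---that $\ppsi_{t}(\alpha)$ is an isomorphism, that the cyclic base change does not affect $\pphi_{t}\QQ_{\cx}[n+1]$, and that $\pphi_{t}R\pi_{*}=R\pi_{0*}\pphi_{t}$, produces a distinguished triangle of mixed Hodge modules supported at $p$,
\[
(\imath_{p})_{*}\widetilde{R\Gamma}(\cE)[n]\;\longrightarrow\;\pphi_{t}\QQ_{\cx}[n+1]\;\longrightarrow\;R\pi_{0*}\bigl(\pphi_{t}\QQ_{\cy}[n+1]\bigr)\;\xrightarrow{+1},
\]
where $\widetilde{R\Gamma}(\cE)=\cone(\QQ\to R\Gamma(\cE,\QQ))$ and the third term is supported on $\sing(Y_{0})\subseteq\cE$, with hypercohomology that of the vanishing-cycle complex $\phi_{t}\QQ_{\cy}$ of the \emph{semistable} degeneration $\cy\to\Delta$. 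By Proposition \ref{prop1.4}(ii) the middle term sits in degree $0$ and equals $H^{n}_{\van}(X_{t})$, so the long exact hypercohomology sequence exhibits $H^{n}_{\van}(X_{t})$ as an extension of a sub-MHS of $\HH^{n}(Y_{0},\phi_{t}\QQ_{\cy})$ by a quotient-MHS of $\widetilde{H}^{n}(\cE)$. Since $\gr^{W}_{\bullet}$ is exact and pure polarizable Hodge structures form a semisimple category, it suffices to bound $\gr^{W}_{\bullet}$ of these two outer groups.

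For $\widetilde{H}^{n}(\cE)$: as $\cE=\bigcup_{i=1}^{e}\cE_{i}$ is a reduced SNC divisor with smooth projective components, the Mayer--Vietoris spectral sequence $E_{1}^{p,q}=H^{q}(\cE^{[p]})\Rightarrow H^{p+q}(\cE)$ degenerates at $E_{2}$ by strictness of weights, so $\gr^{W}_{q}\widetilde{H}^{n}(\cE)$ is a subquotient of $H^{q}(\cE^{[n-q]})$; summing over $q$ gives the first family of summands $\bigoplus_{k=0}^{n}H^{n-k}(\cE^{[k]})$. For $\HH^{n}(Y_{0},\phi_{t}\QQ_{\cy})$: Steenbrink's weight spectral sequence for $\cy\to\Delta$ expresses $\gr^{W}_{\bullet}\HH^{*}(Y_{0},\psi_{t}\QQ_{\cy})$ in terms of the groups $H^{*}(Y_{0}^{[k]})(-j)$, a given multi-intersection $Y_{0}^{[k]}$ appearing with \emph{all} Tate twists $\QQ(0),\QQ(-1),\dots,\QQ(-k)$; passing to the cone $\phi_{t}\QQ_{\cy}=\cone(\QQ_{Y_{0}}\to\psi_{t}\QQ_{\cy})$ cancels the $\QQ(0)$-parts (which reassemble to $H^{*}(Y_{0})$), leaving in total degree $n$ exactly $H^{n-k}(Y_{0}^{[k]})\otimes\bigl(\QQ(-1)\oplus\cdots\oplus\QQ(-k)\bigr)=H^{n-k}(Y_{0}^{[k]})\otimes\widetilde{H}^{*}(\PP^{k})$. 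Thus $\gr^{W}_{\bullet}\HH^{n}(Y_{0},\phi_{t}\QQ_{\cy})$ is a subquotient of $\bigoplus_{k=1}^{n}H^{n-k}(Y_{0}^{[k]})\otimes\widetilde{H}^{*}(\PP^{k})$, and part (i) follows.

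For part (ii): since $\cx$ is smooth, $p$ is an isolated hypersurface singularity of $X_{0}=(f)$ in the $(n{+}1)$-fold $\cx$, so by Milnor's theorem its Milnor fibre $F_{f,p}$ is homotopy equivalent to a wedge of $\mu$ $n$-spheres and $\chi(F_{f,p})=1+(-1)^{n}\mu$. On the other hand $\pi\colon\cy\to\fx$ is proper and an isomorphism away from the point of $\fx$ over $p$, whose preimage is $\cE$; hence $F_{f,p}$ is homotopy equivalent to the nearby fibre of $\cy\to\Delta$ taken over $\cE$, and additivity of the Euler characteristic for $\QQ$-constructible complexes gives
\[
\chi(F_{f,p})=\chi\bigl(\HH^{*}(\cE,\psi_{t}\QQ_{\cy}|_{\cE})\bigr)=\sum_{S}\chi(S)\,\chi\bigl((\psi_{t}\QQ_{\cy})_{x\in S}\bigr),
\]
the sum over the strata $S$ of $\cE$. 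For a semistable degeneration the stalk of $\psi_{t}\QQ_{\cy}$ at a point lying on exactly $r$ components of $Y_{0}$ is $H^{*}((S^{1})^{r-1})$, of Euler characteristic $1$ for $r=1$ and $0$ for $r\ge2$; the only strata of $\cE$ lying on a single component of $Y_{0}$ are $\cE_{1}^{*},\dots,\cE_{e}^{*}$, so the sum collapses to $\sum_{i=1}^{e}\chi(\cE_{i}^{*})$. Equating the two expressions for $\chi(F_{f,p})$ and solving for $\mu$ gives $\mu=(-1)^{n}\{-1+\sum_{i=1}^{e}\chi(\cE_{i}^{*})\}$.

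The delicate point is the second family of summands in (i): making Steenbrink's spectral sequence land precisely on $\widetilde{H}^{*}(\PP^{k})$ (rather than all of $H^{*}(\PP^{k})$) and in total degree $n$ requires a careful accounting of which Tate twists survive in the cone $\phi_{t}=\cone(\QQ_{Y_{0}}\to\psi_{t})$, compounded by the fact that the semistable model is reached only after the cyclic base change $\rho$, so one must track $\pphi_{t}\QQ_{\cx}[n+1]$ against its counterpart on $\cy$ through the $\mu_{\kappa}$-action, exactly as in the proof of Proposition \ref{prop1.5a}. Part (ii), by contrast, is essentially A'Campo's Euler-characteristic formula read off the semistable model, and presents no real difficulty once $F_{f,p}$ is identified with the nearby fibre over $\cE$.
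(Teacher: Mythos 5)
Your proof is correct and starts from the same distinguished triangle as the paper's, applying $\pphi_t$ to $\cone(\alpha)[-1]\to\QQ_{\fx}[n+1]\xrightarrow{\alpha}R\pi_*\QQ_{\cy}[n+1]$ exactly as in the proof of Proposition~\ref{prop1.5a} and using that $\ppsi_t(\alpha)$ is an isomorphism. The divergence is only in the bookkeeping. For (i), the paper collapses the resulting triangle into a single iterated cone, writes out the total complex explicitly using the description of the cohomology sheaves of $\pphi_t\QQ_{\cy}$ for a semistable degeneration (cited from Part~II), and reads the subquotient statement off the $E_1$ page of the hypercohomology spectral sequence of that complex; you instead take the long exact sequence of the triangle and bound the two outer groups $\widetilde{H}^n(\cE)$ and $\HH^n(Y_0,\phi_t\QQ_{\cy})$ separately by Mayer--Vietoris and by Steenbrink's weight spectral sequence. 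These amount to the same input, as you correctly flag in your final paragraph, though you overestimate the difficulty slightly: the triangle already encodes the passage through the $\mu_\kappa$-cover, so no separate tracking of the $\mu_\kappa$-action is needed beyond what you did. The more genuine difference is in (ii). The paper computes $\chi(H^*_{\van})+1$ term-by-term from the explicit cone complex and simplifies the alternating sum to $\sum_i\chi(\cE_i^*)$; you instead identify $F_{f,p}$ with the local nearby fiber of $\cy\to\Delta$ over $\cE$ (using that $\pi$ is proper and an isomorphism off $\cE$), and apply additivity of $\chi$ over the strata of $\cE$, with strata lying on $r\geq 2$ components of $Y_0$ contributing $\chi\bigl((S^1)^{r-1}\bigr)=0$. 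The paper itself notes that (ii) is ``a close cousin of the theorem of A'Campo''; you make that kinship literal, which yields a somewhat more transparent route to the same formula, at the modest cost of justifying the homotopy equivalence $F_{f,p}\simeq\bigl(\text{tube of }\cy_{t_0}\text{ around }\cE\bigr)$ --- which you do correctly.
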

\begin{proof}
We shall see in Part II that for a semistable fiber at $t=0$,
$\pphi_{t}\QQ_{\cy}[n+1]\in\mathrm{Perv}(Y_{0})$ has terms in degrees
($k-n=$) $1-n$ thru $0$ with
\[
(\pphi_{t}\QQ_{\cy})^{k+1}=\ci_{*}^{[k]}(\oplus_{\ell=1}^{k}\QQ_{Y_{0}^{[k]}}(-\ell))=\ci_{*}^{[k]}\QQ_{Y_{0}^{[k]}}\otimes\tilde{H}^{*}(\PP^{k}).
\]
The rest of the proof of Prop. \ref{prop1.5a} is unchanged, and so
$\pphi_{t}\QQ_{\cx}[n+1]$ is $(\imath_{p})_{*}$ of the MHS ($H_{\van}^{n}(X_{t})=$)\begin{equation*}
\begin{split}
\HH^n\{&\cone(\QQ\oplus R\Gamma\,\pphi_t\QQ_{\cy}\to R\Gamma\QQ_{\cE})\}
\\
=&\;\HH^n\{\widetilde{R\Gamma}\QQ_{\cE}\to R\Gamma\QQ_{Y_0^{[1]}}\otimes\tilde{H}^*(\PP^1)\to\cdots\to R\Gamma\QQ_{Y_0^{[n]}}\otimes\tilde{H}^*(\PP^n)\}
\\
=&\;\HH^n\{\widetilde{R\Gamma}\QQ_{\cE^{[0]}}\to R\Gamma\QQ_{\cE^{[1]}}\oplus (R\Gamma\QQ_{Y_0^{[1]}}\otimes \tilde{H}^*(\PP^1))\to \\
&\mspace{200mu}\cdots\to R\Gamma\QQ_{\cE^{[n]}}\oplus (R\Gamma\QQ_{Y_0^{[n]}}\otimes\tilde{H}^*(\PP^n))\} ,
\end{split}
\end{equation*}giving (i). This also shows that $\chi(H_{\van}^{*}(X_{t}))+1$ ($=(-1)^{n}\mu+1$)
is given by\begin{equation*}
\begin{split}
\sum_{k=0}^n (-1)^k \{(\chi(\PP^k)-1)&\chi(Y_0^{[k]})+\chi(\cE^{[k]})\} \\
&=\sum_{i=1}^e\chi(\cE_i)+\sum_{k=1}^n (-1)^k(k\chi(Y_0^{[k]})+\chi(\cE^{[k]}))\\
&=\sum_{i=1}^e\chi(\cE_i^*),
\end{split}
\end{equation*}
which yields (ii).
\end{proof}
While $H_{\van}^{n}(X_{t})$ in our setting \eqref{eq1.5c} can in
general have weights from $0$ to $2n$, Theorem \ref{prop1.5b}(i)
makes it clear that the graded pieces are directly related to strata
of the tail, while (ii) is a close cousin of the theorem of A'Campo
\cite{AC}.  The proof of (i) actually yields a more precise computation of $\gr^W_{\bullet}H_{\van}^*$ related to the ``motivic Milnor fiber'' of \cite{DL}, and which we shall use systematically in Part II.
\begin{example}[{see also \cite[Sect. 6]{log2}}]
\label{ex1.5c} Suppose $X_{0}$ has a Dolgachev singularity of type
$E_{12}$, viz. $f\sim x^{2}+y^{3}+z^{7}$ locally. Taking $\kappa=42$
yields (for $\fx$) $t^{42}+x^{2}+y^{3}+z^{7}=0$, whose weighted
blow-up produces a singular fiber $X_{0}'\cup\cE'$ with $X_{0}'\cap\cE'\cong\PP^{1}\ni p_{1},p_{2},p_{6}$
and $X_{0}',\cE'$ having $A_{k}$ singularities at $p_{k}$. After
a toric resolution, we arrive at the SSR $\cy$, with $\cE_{1}=\widetilde{\cE'}$
a $K3$ surface and $\cE_{2},\ldots,\cE_{10}$ toric Fanos; $\cE_{1}$
meets $\tilde{X}_{0}$ and each $\cE_{i=2,\ldots,10}$ in a $\PP^{1}$,
and $H^{2}(\cE_{1}^{*})$ has Hodge numbers $(1,10,1)$. \[\includegraphics[scale=0.6]{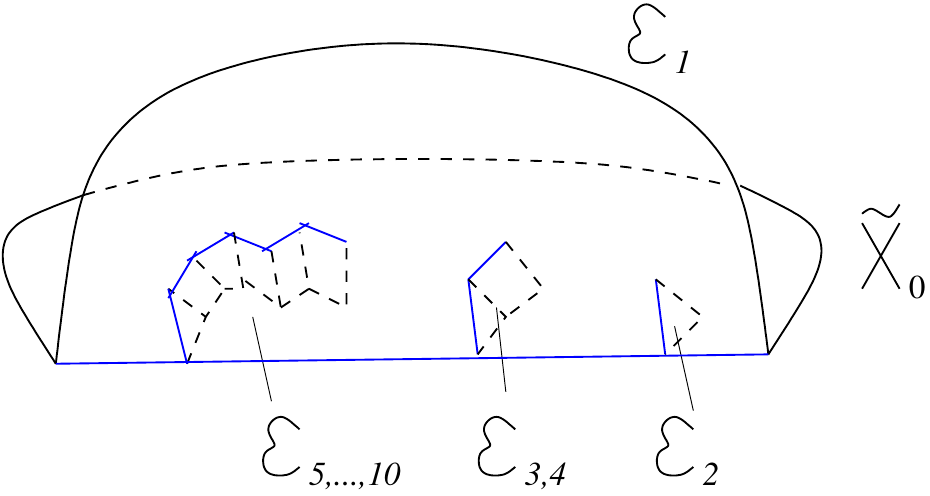}\]The
other $\cE_{i}^{*}$ are all $\mathbb{G}_{m}^{2}$ or $\mathbb{G}_{m}\times\mathbb{A}^{1}$,
so have $\chi=0$, which yields $\mu=\chi(\cE_{i}^{*})-1=12$; indeed,
$H_{\van}^{2}(X_{t})$ is just $H^{2}(\cE_{1}^{*})=H_{\mathrm{tr}}^{2}(\cE_{1})$
in this case. The moral is that toric components of $\cE$ arising
from resolving canonical singularities (here the threefold $A_{1}+A_{2}+A_{6}$
singularities) won't complicate the result much beyond the case of
$\cE$ smooth.  A general reason for this is given by Prop. \ref{propA} below.
\end{example}

We conclude by sketching a geometric application of the more general form \eqref{eq1.4b} of Clemens--Schmid, where $\ck$ is not $\IC_{\cx}$.  (Full details will appear elsewhere.)
\begin{example}\label{exKatz}
Let $\ck:=\IC_{\cx}(\tilde{\LL})$	, where:
\begin{itemize}[leftmargin=0.5cm]
\item $\cx\overset{f}{\to}\PP^1_t$ is the minimal smooth compactification of the elliptic curve family $w^2 = tz(z-1)(z+1)+t^2$ with $\Sigma = \{0,\pm\tfrac{2}{3\sqrt{3}},\infty\}$;
\item $\LL$ is the rank two local system on $\PP^1_z\setminus\{0,\pm 1,\infty\}$ arising from relative $H^1$ of the rational elliptic surface $\cE\to \PP^1_z$ with fibers $\mathrm{I}_2$ at $\pm1$ and $\mathrm{I}_4$ at $0,\infty$; and
\item 	$\tilde{\LL}:=\pi^* \LL$ is the pullback local system on $\cy:=\cx\setminus \overline{\pi^{-1}(0,\pm1,\infty)}\overset{\jmath}{\hookrightarrow}\cx$, where $\cx \overset{\pi}{\dashrightarrow}\PP^1_z$ is given by $\pi(t,w,z):=z$.
\end{itemize}
Taking $\ell=0$ in \eqref{eq1.4b}, one checks that for $\sigma \in \Sigma$, $\gy=0$ and so
\begin{equation}\label{K!}
H^0(X_{\sigma},\IC_{\cx}(\tilde{\LL}))\underset{\cong}{\overset{\sp}{\longrightarrow}} H^0_{\lm,\sigma}(\IC_{\cx}(\tilde{\LL}))^{T_{\sigma}}\cong H^2_{\text{tr}}(S_t)^{T_{\sigma}}_{\lm,\sigma}
\end{equation}
where $S_t :=\widetilde{X_t\times_{\PP^1_z}\cE}$ is a family of (smooth) surfaces over $\PP^1_t\setminus \Sigma$ introduced by Katz \cite{Katz}.

For $t\notin\Sigma$, $H^2(S_t)=\IH^1(X_t,\tilde{\LL})$ has rank $7$ by Euler-Poincar\'e \eqref{eq1.4i}, with Hodge numbers $(2,3,2)$.  Viewed as a weight-$2$ VHS on $\PP^1\setminus \Sigma$, it has geometric monodromy group $G_2$, as shown by an arithmetic argument in \textit{op. cit.} and by a direct calculation of the monodromies in \cite{Jrthesis}, both quite painstaking.  However, we can use \eqref{K!} to quickly deduce the Hodge-Deligne diagrams for $H^2_{\text{tr}}(S_t)_{\lm,\sigma}$ (Figure \ref{fig6.3}); in particular, at $\sigma=0$ and $\infty$ this is much easier than using a smooth compactification of the family of surfaces. 
\begin{figure}
\includegraphics[scale=0.7]{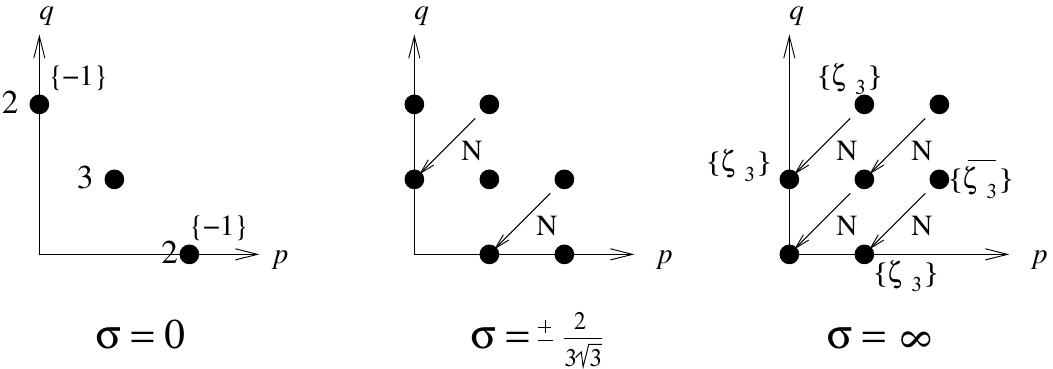}
\caption{Hodge-Deligne diagrams for Example \ref{exKatz}}\label{fig6.3}
\end{figure}
\vspace{3mm}

\hangindent=1em
\hangafter=1
\noindent\underline{$\sigma=\pm\tfrac{2}{3\sqrt{3}}$}:  $X_{\sigma}$ has Kodaira type $\mathrm{I}_1$, with node $p_{\sigma}\in Y_{\sigma}\overset{\jmath^{\sigma}}{\hookrightarrow}X_{\sigma}$, and $\ci^*_{\sigma}\ck=\jmath^{\sigma}_* (\tilde{\LL}|_{Y_{\sigma}})[1]$.  The LHS of \eqref{K!} is thus $H^1(\Xs,\jmath^{\sigma}_* \tilde{\LL}|_{Y_{\sigma}})$, which is an extension of $H^1(\tilde{X}_{\sigma},\rho^* \jmath^{\sigma}_* \tilde{\LL}|_{Y_{\sigma}})$ (weight 2 and rank 3 by \eqref{eq1.4i}) by $H^1(E_{p_{\sigma}})$, where $\rho\colon \tilde{X}_{\sigma}\to \Xs$ is the normalization and $E_{p_{\sigma}}$ is the fiber of $\cE$ over $\pi(p_{\sigma})$.
\vspace{3mm}

\noindent\underline{$\sigma=0$}: $X_0 = \cup_{i=0}^4 P_i$ has type $\mathrm{I}_0^*$, 5 $\PP^1$'s with $P_0$ meeting each $P_i$ in a single node $p_i$. The divisor $Z=\cx\setminus\cy$ contains $\{P_i\}_{i=1}^4$, and has additional components meeting $\{P_i\}_{i=2}^4$ at $\{q_i\}_{i=2}^4$. So $\ch^{-1}\ci^*_0\ck$ is $\jmath^0_* \tilde{\LL}|_{P_0\setminus \{p_i\}_{i=1}^4}$ on $P_0$ and $\QQ(0)$ on the $\{P_i\}_{i=1}^4$, while $\ch^0\ci^*_0\ck$ is $\imath^{q_i}_* \QQ(-1)$ on the $\{P_i\}_{i=2}^4$.  Conclude that $H^1(X_0,\ch^{-1}\ci_0^*\ck)=0$ by Mayer-Vietoris and \eqref{eq1.4i}, so that $\text{LHS}\eqref{K!}\cong H^0(X_0,\ch^0\ci^*_0\ck)\cong \QQ(-1)^{\oplus 3}$.  (The key observation here was that the pullback of a unipotent degeneration of weight 1 rank 2 HS by $(x,y)\mapsto xy$ has local $\IH^1\cong \QQ(-1)$, cf. \eqref{eq1.9.3}-\eqref{eq1.9.4}.) 
\vspace{3mm}

\noindent\underline{$\sigma=\infty$}:  $X_{\infty}=\cup_{i=1}^3 Q_i$ has type $\mathrm{IV}$, 5 concurrent $\PP^1$'s meeting at $p_{\infty}$.  One finds $Q_i\cap Z=\{p_0^i,p_1^i,p_{-1}^i,p_{\infty}\}$ for $i=1,2$, while $Z$ contains $Q_3$ and another component meeting it in a node $q$.  Therefore $\ch^{-1}\ci^*_{\infty}\ck$ is $\jmath^{\infty}_* \tilde{\LL}|_{Q_i\cap\cy}$ on $Q_1,Q_2$, and $\QQ(0)$ on $Q_3$; whereas $\ch^0\ch^*_{\infty}\ck$ is just $\imath^q_* \QQ(-1)$ on $Q_3$.  One gets a sequence \begin{multline}0\to H^1(X_{\infty},\ch^{-1}\ci_{\infty}^*\ck)\to \text{LHS}\eqref{K!} \\ \to H^0(X_{\infty},\ch^0 \ci^*_{\infty}\ck)\overset{d_2}{\to}H^2(X_{\infty},\ch^{-1}\ci_{\infty}^*\ck)\to 0\end{multline}with $d_2$ an isomorphism and first term $\QQ(0)$.
\vspace{3mm}

\hangindent=0em
\noindent To get $\ker(N)$ (instead of $\ker(T_{\sigma}-I)$) at $0$ and $\infty$, one performs a base-change by $t\mapsto t^2$ [resp. $t^3$] followed by a proper modification to replace $X_0$ [resp. $X_{\infty}$] by a smooth elliptic curve. The computations then proceed as above.
\end{example}

\section{Decomposition Theorem for an isolated singularity}\label{sec-isolated}

Let $\tilde{\fx}\overset{\pi}{\to}\fx$ be the resolution of an isolated
singularity $p\overset{\imath}{\hookrightarrow}\fx$ with exceptional
divisor $\cE$ (not assumed smooth or normal-crossings). With $d:=\dim(\fx)$,
\eqref{eq1.2a} specializes to\begin{equation} \label{eq1.6a}
R\pi_* \QQ_{\tilde{\fx}}[d] \simeq \IC_{\fx}\oplus \imath_* \left(\oplus_i V^i[-i]\right),
\end{equation}where the stalk cohomologies of the intersection complex $H^{j}(\imath^{*}\IC_{\fx})$
{[}resp. $H^{-j}(\imath^{!}\IC_{\fx})${]} vanish for $j>0$. Writing
$\cn_{\cE}$ for the preimage of a ball about $p$, we apply $H^{j}\circ\imath^{*}$
{[}resp. $H^{-j}\circ\imath^{!}${]} to \eqref{eq1.6a} to find\begin{equation}\label{eq1.6b}
V^i \cong \left\{ \begin{array}{cc}H^{d+i}(\cn_{\cE}), & i>0\\H_{c}^{d+i}(\cn_{\cE}), & i\leq0\end{array} \right. \cong \left\{ \begin{array}{cc}H^{d+i}(\cE), & i>0\\H_{d-i}(\cE)(-d), & i\leq0.\end{array} \right.
\end{equation}The hard Lefschetz property $V^{-j}({-j})\cong V^{j}$ (as $\QQ$-MHS)
therefore reads $H_{d+j}(\cE)({-d}{-j})\cong H^{d+j}(\cE)$, which
implies that the $V^{i}$ are all pure.
\begin{example}
\label{ex1.6a} Specialize the scenario \eqref{eq1.5c} to $f$ a
family of elliptic curves with cuspidal (type $\mathrm{II}$) fiber
$X_{0}$, and $\kappa=6$. The local equation of $\fx$ at $p$ is
then $x^{2}+y^{3}+t^{6}+\cdots$, i.e. an $\tilde{E}_{8}$ (simple
elliptic) singularity, with $\tilde{X}_{0}\cong\PP^{1}$ and $\cE$
a CM elliptic curve. We have $R\pi_{*}\QQ_{\cy}[2]\simeq\IC_{\fx}\oplus\imath_{*}\QQ(-1)$
and a short-exact sequence\begin{equation}\label{eq1.6c}
0\to \imath_* H^1(\cE)\to \QQ_{\fx}[2] \to \IC_{\fx}\to 0
\end{equation}in $\mathrm{Perv}(\fx)$, which we claim is \emph{not split}.
(Hence, as remarked in Example \ref{ex2.1}, the DT for $f$ does not apply to $\ck = \QQ_{\fx}[2]$.)

Indeed, were \eqref{eq1.6c} split, $\QQ_{\fx}[2]$ would be semisimple
hence (as in $\S$\ref{S1.3}) 
\[
\pphi_{t}\QQ_{\fx}[2]=\image(\can)\oplus\ker(\var)=\image(\ppsi_{t}\QQ_{\fx}[2])\oplus\ker(T^{6}-I),
\]
where everything is supported on $p$. But $T^{6}$ acts trivially
(since the eigenvalues of $T$ are $\zeta_{6}^{\pm1}$, cf. Example
\ref{ex1.5a}), while $\can$ is onto, a contradiction. (Alternatively,
one could take the long-exact hypercohomology sequence of \eqref{eq1.6c}
and observe that the connecting homomorphism $\delta:\IH^{1}(\fx)=\HH^{-1}(\IC_{\fx})\to H^{1}(\cE)$
is an isomorphism in view of \eqref{eq1.4f}.)

More generally, the argument shows that sequences like \eqref{eq1.6c}
are non-split if the order of a nontrivial eigenvalue of $T_{\mathrm{ss}}$
divides the base-change exponent $\kappa$. In particular, this applies
to the sequence $0\to\imath_{*}H^{2}(\mathcal{E}\backslash E)\to\QQ_{\fx}[3]\to\IC_{\fx}\to0$
implicit in Example \ref{ex1.5b}.  Since the DT then applies only to $\IC_{\fx}$ (not $\QQ_{\fx}[n+1]$), we have only \eqref{eq1.4f} (and not \eqref{eq1.4j}) for $\fx$.
\end{example}
As an immediate consequence of \eqref{eq1.6a}--\eqref{eq1.6b}, we
find that \begin{equation}\label{eq1.6d}
H^k(\tilde{\fx})\twoheadrightarrow H^k(\cE)\;\;\;\text{for}\;\;\; k\geq d.
\end{equation}Now suppose that $\fx=X_{0}$ appears as the singular fiber in a family
$\cx\to\Delta$ with $n=d$ and $\cx$ smooth (and write $\cE\subset\tilde{X}_{0}\overset{\pi}{\twoheadrightarrow}X_{0}$
for the exceptional divisor). For $k<n$, the Clemens--Schmid and vanishing cycle sequences give $H^{k}(X_{0})\cong H_{\lm}^{k}(X_{t})^{T}=H_{\lm}^{k}(X_{t})$,
which is pure since $T$ acts by the identity. So for $1<k<n$, in
the exact sequence 
\[
\to H^{k-1}(X_{0})\overset{\pi^{*}}{\to}H^{k-1}(\tilde{X}_{0})\overset{\imath_{\cE}^{*}}{\to}H^{k-1}(\cE)\overset{\tau}{\to}H^{k}(X_{0})\to,
\]
purity of $H^{k}(X_{0})$ $\implies$ $\tau=0$ $\implies$ $\imath_{\cE}^{*}$
surjective; therefore \begin{equation}\label{eq1.6e}
H^k(\tilde{X}_0)\twoheadrightarrow H^k(\cE)\;\;\;\text{for}\;\;\;\;\; k\leq n-2\;\; \text{and}\;\; k\geq n.
\end{equation}Of course, our assumption implies that $p$ is a hypersurface singularity,
so that $\QQ_{X_{0}}[d]$ is perverse; \eqref{eq1.6e} can then also
be derived from the resulting exact sequence $0\to\imath_{*}W\to\QQ_{X_{0}}[n]\to\IC_{X_{0}}\to0$
(in fact, we only need an isolated l.c.i. singularity here). The Clemens--Schmid and vanishing cycle
sequences' real strength is in using the smooth fibers' cohomology
to further constrain those of $X_{0}$ and $\cE$.

\section{Cyclic base-change and quotients}\label{sec-cyclic}

We return to a scenario analogous to \eqref{eq1.5c}, but where the
singularities need not be isolated. Begin with a flat projective family
$f:\cx'\to\Delta$ with $\Xi':=\text{sing}(X_{0}')\supseteq\text{sing}(\cx')$
($\implies$ $f^{-1}(\Delta^{*})=\cx'\backslash X_{0}'$ smooth),
and recall that for us $X_{0}'=\cup D_{i}$ is the \emph{reduction}
of the divisor $(f)=\sum\kappa_{i}D_{i}$, $\kappa_{i}\in\ZZ_{\geq0}$.
For lack of a less self-contradictory terminology, we shall say that
$\cx'$ has \emph{reduced special fiber} if all $\kappa_{i}=1$; this
implies in particular that $X_{0}'$ is Cartier.

Let $g:\cx''\to\Delta$ be a second family with a finite surjective
morphism $\rho:\cx''\to\cx'$ over a cyclic quotient $t\mapsto t^{\kappa}$;
and fix log resolutions $\cy',\cy''$ of $(\cx',X_{0}'),(\cx'',X_{0}'')$
to have a diagram \begin{equation}\label{eq1.7a}
\xymatrix{Y_0 ' \subset \cy' \ar [rd] \ar @{->>}[r]^{\pi'} & \cx' \ar [d]^f & \cx'' \ar [l]_{\rho} \ar[d]^g & \cy''\supset Y_0 '' \ar [ld] \ar @{->} [l]_{\pi''} \\ & \Delta & \Delta \ar [l]_{(\cdot)^{\kappa}} }
\end{equation}Writing $\cx$, $\cy$, etc. when we want to make a statement independent
of the decoration, we assume that the log resolutions $\pi$ are isomorphisms
off $X_{0}$ and write $Y_{0}=\tilde{X}_{0}\cup\cE$. Denote the monodromies
by $T':=T_{0}=T_{0}^{\text{ss}}e^{N_{0}}$ and $T''=T_{0}^{\kappa}$.

\subsection{Cyclic base-change \label{S1.7.1}}

An important special case of \eqref{eq1.7a} is where: \begin{itemize}[leftmargin=0.5cm]
\item $\rho$ is the base-change, so that $X_0 ' = X_0 '' =: X_0$;
\item $\cy''$ is the semi-stable reduction of $\cx'$ (so $\kappa$ must satisfy $(T_0^{\text{ss}})^{\kappa}=I$); and
\item $\cx'$ (hence $\cx''$) has reduced special divisor.
\end{itemize}In this case the SNCD $Y_{0}=\tilde{X}_{0}\cup\cE$, with $\tilde{X}_{0}\twoheadrightarrow X_{0}$
birational. When $\cx$ ($=\cx'$ or $\cx''$) is not smooth (though
we continue to assume $\cx\backslash X_{0}$ smooth), we would first
like to ``quantify'' the failure of the local invariant cycle theorem
for $\cx$.

Begin with the diagram of split short-exact sequences \begin{equation*}
\xymatrix@R-1pc@C-2pc{0 \ar [rd] & & & & & & 0 \ar [ld] \\ & \IH^k_{\pha}(\cx) \ar @/^2pc/ [rrrr] \ar [rd] & & H^k(X_0) \ar [ld]_{\widetilde{\gamma^*}} \ar [rd]^{\gamma^*} & & H^k_{\pha}(Y_0) \ar [ld] \\ && \IH^k(\cx) \ar @/^1pc/ [rr] \ar [rd] && H^k(Y_0) \ar [ld] \\ &&& H^k_{\lm}(Y_t)^T \ar [rd] \ar [ld] \\ && 0 & & 0}
\end{equation*}where $\Xi\overset{\alpha}{\leftarrow}\cE\overset{\beta}{\rightarrow}Y_{0}\overset{\gamma}{\rightarrow}X_{0}$
and $\widetilde{\gamma^{*}}$ is induced by $\QQ_{\cx}[n+1]\to\IC_{\cx}$.
By the decomposition theorem for $\cy\twoheadrightarrow\cx$, the
curved arrows are split injections as well. Clearly then
\[
H_{\pha}^{k}(X_{0}):=(\widetilde{\gamma^{*}})^{-1}\left(\IH_{\pha}^{k}(\cx)\right)=(\gamma_{*})^{-1}\left(H_{\pha}^{k}(Y_{0})\right)
\]
and\begin{equation} \label{eq1.7b}
\overline{H}^{k}(X_{0}):=H^{k}(X_{0})/H_{\pha}^{k}(X_{0})\cong\image\left\{ H^{k}(X_{0})\overset{\sp}{\to} H_{\lm}^{k}(X_{t})^{T}\right\} 
\end{equation}are independent of choices (of whether $\cx=\cx'$ or $\cx''$, and
of $\cy$).
\begin{rem}
\label{rem1.7} Since $\IH_{\pha}^{k}(\cx)=\IH^{k}(\cx)\cap H_{\pha}^{k}(Y_{0})\subset H^{k}(Y_{0})$,
\begin{equation*}
\begin{array}{ccccc}\IH^{k}(\cx) & \cong & \IH_{\pha}^{k}(\cx) & \oplus & H_{\lm}^{k}(X_{t})^{T}\\ \vinj &  & \vinj &  & \parallel \\ H^{k}(Y_{0}) & \cong & H_{\pha}^{k}(Y_{0}) & \oplus & H_{\lm}^{k}(X_{t})^{T}\end{array}
\end{equation*}exhibits the MHS $\IH^{k}(\cx)$ as a ``lower bound'' on the cohomology
of any resolution.
\end{rem}
On the other hand, the monodromy invariants $H_{\lm}^{k}(X_{t})^{T}$
are certainly not independent of the choice of $\cx$, and so the
cokernel of $\sp$ in \eqref{eq1.7b} cannot be. In view of the exact
sequence
\[
\to H^{k}(X_{0})\to H^{k}(Y_{0})\oplus H^{k}(\Xi)\overset{\beta^{*}-\alpha^{*}}{\longrightarrow}H^{k}(\cE)\overset{\delta}{\to}H^{k+1}(X_{0})\to
\]
we compute\begin{multline}\label{eq1.7c}
\mathrm{coker}\left( H^k(X_0)\to H^k_{\lm}(X_t)^T \right) = \mathrm{coker}\left( \overline{H}^k(X_0)\overset{\overline{\gamma^*}}{\to}\overline{H}^k(Y_0)\right)
\\
=\ker\left\{ \tfrac{H^k(\cE)}{\alpha^* H^k(\Xi)+\beta^* H^k_{\pha}(Y_0)} \overset{\delta}{\to}H^{k+1}(X_0)\right\} =: \overline{H}^k(\cE).
\end{multline}Accordingly, the replacement for Clemens--Schmid in this general context
becomes \begin{equation}\label{eq1.7d}
0\to \overline{H}^k(X_0)\to H^k_{\lm}(X_t)^T\to \overline{H}^k(\cE) \to 0.
\end{equation}

Specializing a bit more, suppose the pre-base-change family $\cx'$
is smooth: then $\IH^{k}(\cx')=H^{k}(X_{0})$ itself is the ``lower
bound'' for $H^{k}(Y_{0}')$, and $\overline{H}^{k}(\cE')=\{0\}$.
Since $\cy''\twoheadrightarrow\cx'$ is a proper morphism between
equidimensional smooth manifolds (cf. \cite{We}), we have ($H^{k}(\cx'')=H^{k}(X_{0})=$)
$H^{k}(\cx')\hookrightarrow H^{k}(\cy'')$ which yields\begin{equation}\label{1.7e}
\begin{array}{ccccc}H^{k}(X_{0}) & \hookrightarrow & \IH^{k}(\cx'') & \hookrightarrow & H^{k}(Y_{0}'')\\\parallel &  & \parallel &  & \parallel\\H_{\pha}^{k}(X_{0}) & \hookrightarrow & \IH_{\pha}^{k}(\cx'') & \hookrightarrow & H_{\pha}^{k}(Y_{0}'')\\\oplus &  & \oplus &  & \oplus\\H_{\lm}^{k}(X_{t})^{T_{0}} & \hookrightarrow & H_{\lm}^{k}(X_{t})^{T_{0}^{\kappa}} & = & H_{\lm}^{k}(X_{t})^{T_{0}^{\kappa}}\end{array}
\end{equation}(where $(\cdot)^{T_{0}^{\kappa}}=\ker(N_{0})$) and\begin{equation}\label{eq1.7f}
\overline{H}^{k}(\cE'')=H_{\lm}^{k}(X_{t})^{T_{0}^{\kappa}}/H_{\lm}^{k}(X_{t})^{T_{0}}.
\end{equation}So while (by Cor. \ref{cor1.4}) $\gr_{F}^{0}$ and $W_{k-1}$ of
$H^{k}(X_{0})$ and $H_{\lm}^{k}(X_{t})^{T_{0}}$ agree, this is false
in general if we replace $T_{0}$ by $T_{0}^{\kappa}$.

\subsection{Cyclic quotient singularities}

Rather than obtaining $\cx''$ from $\cx'$, we may wish to define
$\cx':=\cx''/G$, where $G=\langle g\rangle\cong\ZZ/\kappa\ZZ$ acts
nontrivially on $X_{0}$. In this case, $\cx'$ will essentially never
be smooth or have reduced special fiber. Nevertheless, it is \emph{always}
true (no need to assume $\cx''$ smooth; cf. \cite[Th. III.7.2]{Br})
that\begin{equation}\label{eq1.7g}
H^*(X_0')=H^*(\cx')\overset{\cong}{\underset{\rho^*}{\to}}H^*(\cx'')^G = H^*(X_0'')^G.
\end{equation}We have $g^{*}\in\mathrm{Aut}(H^{*}(X_{0}''))$, $T'\in\mathrm{Aut}(H_{\lm}^{*}(X_{t}'))$,
$T''\in\mathrm{Aut}(H_{\lm}^{*}(X_{t}''))$, and $H_{\lm}^{*}(X_{t}')=H_{\lm}^{*}(X_{t}'')=:H_{\lm}^{*}$.
One may perhaps know $g^{*}$ and $T''$, and wish to determine $T'$:
for instance, if the quotient has been used to form a singularity
of higher index (on $X_{0}'$) from one of index $1$ (on $X_{0}''$).
To that end we have the following
\begin{prop}
\label{prop1.7} $T'$ extends the action of $g^{*}$ on $\image\{H^{*}(X_{0}'')\to H_{\lm}^{*}\}$
$=:\bar{H}^{*}(X_{0}'')$ to all of $H_{\lm}^{*}$. In particular,
we have $\overline{H}^{*}(X_{0}')=(H_{\lm}^{*})^{T'}\cap\overline{H}^{*}(X_{0}'')$,
and so the local invariant cycle theorem holds for $\cx'$ if it holds
for $\cx''$.\end{prop}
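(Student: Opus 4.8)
The plan is to reduce the whole Proposition to one compatibility between the geometric automorphism $g$ and the monodromy $T'$, and then argue formally. First I would fix reference points $s_{0}\in\Delta^{*}$ and $t_{0}$ with $t_{0}^{\kappa}=s_{0}$, set $H_{\lm}^{*}:=H^{*}(X_{s_{0}}')$, and make the identification $H_{\lm}^{*}(X_{t}'')=H_{\lm}^{*}$ of the excerpt explicit via the isomorphism $X_{t_{0}}''\overset{\sim}{\to}X_{s_{0}}'$ induced by $\rho$; under it $T''=(T')^{\kappa}$. Since $\rho$ is a morphism over $\Delta$ (relative to $t\mapsto t^{\kappa}$), restriction to central and to nearby fibres commutes with $\rho^{*}$, so $\sp''\circ\rho_{0}^{*}=\sp'$, where $\rho_{0}:=\rho|_{X_{0}''}\colon X_{0}''\to X_{0}'$ and $\sp',\sp''$ denote the specialization maps to $H_{\lm}^{*}$. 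Combining this with \eqref{eq1.7g} — which says $\rho_{0}^{*}$ is injective with image $H^{*}(X_{0}'')^{G}$ — gives
\[
\overline{H}^{*}(X_{0}')=\image(\sp')=\sp''\left(H^{*}(X_{0}'')^{G}\right).
\]
Everything else will follow from the identity $\sp''\circ g^{*}=T'\circ\sp''$ on $H^{*}(X_{0}'')$.

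The hard part is this identity; it says that $T'$, restricted to classes pulled back from $X_{0}''$, implements the geometric automorphism $g$. I would prove it topologically. The map $g$ covers the rotation $t\mapsto\zeta_{\kappa}t$ of $\Delta$ and restricts on fibres to isomorphisms $g_{t}\colon X_{t}''\overset{\sim}{\to}X_{\zeta_{\kappa}t}''$; since $\rho\circ g=\rho$, after the two $\rho$-identifications $X_{t}''\cong X_{t^{\kappa}}'\cong X_{\zeta_{\kappa}t}''$ the map $g_{t}$ becomes the identity of $X_{t^{\kappa}}'$. Unwinding the specialization of a class $\alpha\in H^{*}(X_{0}'')$: one has $\sp''(g^{*}\alpha)=g_{t_{0}}^{*}\bigl(\alpha|_{X_{\zeta_{\kappa}t_{0}}''}\bigr)$, and $\alpha|_{X_{\zeta_{\kappa}t_{0}}''}$ is the parallel transport of $\sp''(\alpha)$ along the short arc from $t_{0}$ to $\zeta_{\kappa}t_{0}$; pushing this through the $\rho$-identifications, $g_{t_{0}}^{*}$ drops out while the short $t$-arc maps to the full loop once around $s_{0}$, so the net operator on $H_{\lm}^{*}$ is exactly $T'$. (More conceptually: near $X_{0}''$, $\cx''$ is the $\kappa$-fold cyclic cover of $\cx'$ along $t\mapsto t^{\kappa}$, and on nearby cycles a deck transformation acts as the base monodromy; one could also extract this from the cyclic base-change discussion in $\S$\ref{S1.7.1}.) All maps here are morphisms of MHS — for $T'$ one uses that $\ker(T'-I)$ is a sub-MHS, as recorded at the end of $\S$\ref{S1.3} — so this is an identity of MHS maps; the only point to watch is the orientation convention, so that the answer is $T'$ and not $(T')^{-1}$.

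Granting the identity, the rest is formal. Its kernel statement $\ker(\sp'')=H_{\pha}^{*}(X_{0}'')$ is then $g^{*}$-stable, so $g^{*}$ descends to $\overline{H}^{*}(X_{0}''):=\image(\sp'')$ and acts there as $T'|_{\overline{H}^{*}(X_{0}'')}$; in particular $T'$ preserves $\overline{H}^{*}(X_{0}'')$ and extends this $g^{*}$-action to all of $H_{\lm}^{*}$, which is the first assertion. For the displayed equality: $\overline{H}^{*}(X_{0}')=\sp''(H^{*}(X_{0}'')^{G})$ lies in $(H_{\lm}^{*})^{T'}\cap\overline{H}^{*}(X_{0}'')$ because $g^{*}\alpha=\alpha$ forces $T'\sp''(\alpha)=\sp''(g^{*}\alpha)=\sp''(\alpha)$; conversely, if $\xi=\sp''(\alpha)$ with $T'\xi=\xi$, then $\alpha_{G}:=\frac{1}{\kappa}\sum_{j=0}^{\kappa-1}(g^{*})^{j}\alpha\in H^{*}(X_{0}'')^{G}$ satisfies $\sp''(\alpha_{G})=\frac{1}{\kappa}\sum_{j}(T')^{j}\xi=\xi$, so $\xi\in\overline{H}^{*}(X_{0}')$. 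Finally, if the local invariant cycle theorem holds for $\cx''$ then $\overline{H}^{*}(X_{0}'')=(H_{\lm}^{*})^{T''}\supseteq(H_{\lm}^{*})^{T'}$ since $T''=(T')^{\kappa}$, whence $\overline{H}^{*}(X_{0}')=(H_{\lm}^{*})^{T'}\cap(H_{\lm}^{*})^{T''}=(H_{\lm}^{*})^{T'}$, i.e.\ it holds for $\cx'$.
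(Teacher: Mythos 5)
Your proof is correct and proceeds by essentially the same route as the paper's: you reduce the first assertion to the compatibility $\sp''\circ g^{*}=T'\circ\sp''$ and establish it by the same topological observation that $g$ covers the rotation $t\mapsto\zeta_{\kappa}t$ and hence, under the $\rho$-identifications of nearby fibres, implements the downstairs monodromy (the paper phrases this with analytic continuation of a homology basis and transposition, which is the dual formulation of the same fact). You then spell out the ``in particular'' clause via the averaging projector onto $G$-invariants, which the paper leaves implicit; this is a correct and natural completion of the argument.
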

\begin{proof}
If we analytically continue a basis $\{C_{i}\}\subset H_{*}(X_{t}'')$
to $H_{*}(X_{\zeta_{\kappa}t}'')$, then writing $g(C_{i})$ (call
this $\widetilde{g}_{*}$) in terms of these translates is just $g_{*}$
on the ``global'' cycles and corresponds to clockwise monodromy
``downstairs'' (in $t^{\kappa}$). The local-system monodromy $T'$
on cohomology is the \emph{transpose} of the latter (cf. \cite{DS}):
so $T'={}^{t}\widetilde{g}_{*}=\widetilde{g}^{*}$, and $\tilde{g}^{*}|_{\overline{H}^{*}(X_{0}'')}=g^{*}$.
\end{proof}
Now suppose $\cx''$ is smooth. Since Clemens--Schmid \eqref{eq1.4j}
holds for $\cx''$, taking $G$-invariant parts exactly gives\begin{multline}\label{eq1.7h}
0\to H^{k-2}_{\lm}(X_t ')_{T_0}(-1)\overset{\sp^{\vee}}{\to}H_{2n-k+2}(X_0')(-n-1) \\ \overset{\rho_* \circ\gy\circ \rho^*}{\longrightarrow} H^k(X_0')\overset{\sp}{\to}  H^k_{\lm}(X_t')^{T_0}\to 0 ,
\end{multline}since $((H_{\lm}^{*})^{T_{0}^{\kappa}})^{G}=(H_{\lm}^{*})^{T_{0}}$.
(As quotient singularities, those of $\cx'$ are rational \cite{KM},
but the results of $\S$\ref{S1.8} for rational singularities are
weaker than this.) Further, it is often possible to deduce $T'$ from
$g^{*}$ and $T''$ in this case. The action of $T''=(T_{0}^{\text{ss}})^{\kappa}e^{\kappa N_{0}}$
on $H_{\lm}^{*}$ extends to one of $\mathfrak{sl}_{2}\times\langle(T_{0}^{\text{ss}})^{\kappa}\rangle$,
compatibly with the Deligne bigrading $H_{\lm,\CC}^{*}=\oplus_{p,q}(H_{\lm}^{*})^{p,q}$.
Accordingly, it suffices to determine the choice of $\kappa^{\text{th}}$
root $T'$ of $T''$ on $\ker(N_{0})\subset H_{\lm}^{*}$. That of
$T''$ (resp. $T'$) decomposes $\ker(N_{0})\cong\oplus_{k\in\mathbb{N}}W_{k}^{\oplus m_{k}}$
(resp. $\oplus_{\ell\in\mathbb{N}}V_{\ell}^{\oplus n_{\ell}}$) over
$\QQ$, where $\det\{(\lambda I-(T_{0}^{\text{ss}})^{\kappa})|_{W_{k}}\}$
(resp. $\det\{(\lambda I-T_{0}^{\text{ss}})|_{V_{\ell}}\}$) is the
$k^{\text{th}}$ (resp. $\ell^{\text{th}}$) cyclotomic polynomial,
and so the issue is to compute the $\{n_{\ell}\}$ given $\{m_{k}\}$.
The point here is that since $\overline{H}^{*}(X_{0}'')=(H_{\lm}^{*})^{T''}=W_{1}^{\oplus m_{1}}$,
$g^{*}$ determines the $n_{\ell}$ for all $\ell|\kappa$, and one
can sometimes deduce the others from the formula $V_{\ell}=W_{\ell/(\ell,\kappa)}^{\oplus\{\phi(\ell)/\phi(\ell/(\ell,\kappa))\}}$.
For instance, if $\kappa|k$ then the only possibility is $W_{k}^{\oplus m_{k}}=V_{\kappa k}^{\oplus m_{k}/\kappa}$.
Conversely, this puts constraints on the set of $\ZZ/\kappa\ZZ$ by
which one can even consider taking cyclic quotients.

\subsection{Relative quotients}

A more general quotient scenario is where $G\overset{\theta}{\twoheadrightarrow}\ZZ/\kappa\ZZ$;
in $\S$1.7.2, $\theta$ was an isomorphism. Now we consider the opposite
extreme, where $\kappa=1$. More precisely, let $\cx'\overset{f}{\to}\Delta$
be flat, projective, and smooth over $\Delta^{*}$, with $X_{0}'=D_{a}\cup D_{b}$
\emph{generically} a reduced NCD along $D_{ab}=D_{a}\cup D_{b}$.
Suppose that each $x\in X_{0}'$ has a neighborhood $\cv$ arising
as a finite group quotient\[
\xymatrix@C=1em{\tilde{V}_a \cup \tilde{V}_b = s^{-1}(0) \ar @{^(->}[r] & \tilde{\cv}\ar @{->>} [rr]^{/G}_{\rho} \ar [rd]_s && \cv \ar [ld]^{t=f|_{\cv}} & t^{-1}(0)=V_a \cup V_b \ar @{_(->}[l] \\ && \Delta}
\]of a semistable degeneration $\tilde{\cv}/\Delta$. Then the vanishing
cycles of $f$ behave exactly as in a SSD:
\begin{prop}
\label{propA} In this situation, $\pphi_{f}\QQ_{\cx'}\cong\QQ_{D_{ab}}(-1)[-2]$.\end{prop}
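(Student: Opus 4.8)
The plan is to combine locality of the vanishing cycle functor with the given presentation $\cv=\tilde{\cv}/G$ and the compatibility of $\pphi$ with finite morphisms.  First I would reduce to one chart at a time: $\pphi_f\QQ_{\cx'}$ depends only on a neighbourhood of $X_0'$ in $\cx'$, so it suffices to compute $\pphi_t\QQ_{\cv}$ on each of the charts $\cv$ of the hypothesis (here $t=f|_{\cv}$) and to check that the local answers glue.  On such a chart, let $\rho\colon\tilde{\cv}\to\cv$ be the quotient map --- a finite, hence proper, morphism over $\Delta$ with $s=t\circ\rho$.  Since nearby and vanishing cycles commute with proper pushforward (cf.\ \cite{Ma}), $\pphi_t\,R\rho_*\cong R\rho_*\,\pphi_s$; applying this to $\QQ_{\tilde{\cv}}$, using $(R\rho_*\QQ_{\tilde{\cv}})^G=\QQ_{\cv}$ (because $\cv=\tilde{\cv}/G$), and taking $G$-invariants --- an exact operation in characteristic $0$ that commutes with the additive, $G$-equivariant functor $\pphi_t$ --- I would obtain $\pphi_t\QQ_{\cv}\cong\bigl(R\rho_*\,\pphi_s\QQ_{\tilde{\cv}}\bigr)^{G}$.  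This reduces everything to the semistable family $s\colon\tilde{\cv}\to\Delta$ together with a pushforward-and-invariants computation.

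The second step is to identify $\pphi_s\QQ_{\tilde{\cv}}$.  As $\tilde{\cv}$ is smooth and $s^{-1}(0)=\tilde{V}_a\cup\tilde{V}_b$ is a reduced SNCD, near $\tilde{V}_{ab}:=\tilde{V}_a\cap\tilde{V}_b$ one has $s=z_1z_2$, while $s$ is submersive along $s^{-1}(0)\setminus\tilde{V}_{ab}$.  The Milnor fibre at a point of $\tilde{V}_{ab}$ is homotopy equivalent to $\CC^*$, with trivial local monodromy on its $H^1$ (the global components $\tilde{V}_a,\tilde{V}_b$ are not interchanged), so --- exactly as at a node (Example \ref{ex1.5a}) --- $\pphi_s\QQ_{\tilde{\cv}}\cong\QQ_{\tilde{V}_{ab}}(-1)[-2]$, supported on $\tilde{V}_{ab}$.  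For $x\in X_0'\setminus D_{ab}$ the preimage $\rho^{-1}(x)$ avoids $\tilde{V}_{ab}$ (as $\rho(\tilde{V}_{ab})\subseteq D_{ab}$), whence $\pphi_t\QQ_{\cv}=0$ near $x$, consistent with $\QQ_{D_{ab}}$ vanishing there; so I only need to work near points of $D_{ab}$.

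Finally I would compute $\bigl(R\rho_*\QQ_{\tilde{V}_{ab}}(-1)[-2]\bigr)^{G}$ near a point of $D_{ab}$.  The subset $\tilde{V}_{ab}$ is $G$-stable with quotient $D_{ab}\cap\cv$, so $\bigl(R\rho_*\QQ_{\tilde{V}_{ab}}\bigr)^{G}=\QQ_{D_{ab}\cap\cv}$; and $G$ acts trivially on the rank-one coefficient $\QQ(-1)=H^1(\CC^*)$, because each $g\in G$ preserves the two branches $\tilde{V}_a,\tilde{V}_b$ (they descend to the distinct components $D_a\neq D_b$ of $X_0'$) and hence acts on the local Milnor fibre $\{z_1z_2=\epsilon\}\cong\CC^*$ by a map $z_1\mapsto z_1\cdot v$ with $v$ nowhere zero, which is homotopic to the identity since $v$ is defined on a contractible chart.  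Thus $\pphi_t\QQ_{\cv}\cong\QQ_{D_{ab}\cap\cv}(-1)[-2]$, and gluing the canonical local isomorphisms gives $\pphi_f\QQ_{\cx'}\cong\QQ_{D_{ab}}(-1)[-2]$.  I expect the last step to be the main obstacle: pinning down the $G$-action on the vanishing-cycle coefficient $\QQ(-1)$ --- that is, ruling out a branch-interchange (which would act by $-1$, and which is precisely what the hypothesis $D_a\neq D_b$ excludes) --- and, relatedly, making the passage to $G$-invariants under the ramified quotient $\rho$ precise enough that the local computations assemble into the stated global isomorphism.
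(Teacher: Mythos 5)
Your argument is correct and follows the same route as the paper: both reduce to a chart $\cv = \tilde{\cv}/G$, use $\QQ_{\cv}=(R\rho_*\QQ_{\tilde{\cv}})^G$ together with commutation of $\pphi$ with the proper pushforward $R\rho_*$ and with $G$-invariants, compute $\pphi_s\QQ_{\tilde{\cv}}=\QQ_{\tilde{V}_{ab}}(-1)[-2]$ for the SSD $s$, and then take $G$-invariants. You additionally spell out a point the paper's one-line chain leaves implicit, namely that $G$ acts trivially on the Tate-twist coefficient because $\tilde V_a,\tilde V_b$ are not interchanged.
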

\begin{proof}
Working locally, since $t$ is $G$-invariant\begin{flalign*}
{}^p\phi_t \QQ_{\cv} &= \,\pphi_t (R\rho_* \QQ_{\tilde{\cv}})^G = (\pphi_t R\rho_* \QQ_{\tilde{\cv}})^G = (R\rho_* \pphi_t \QQ_{\tilde{\cv}})^G \\ &= (R\rho_* \QQ_{\tilde{V}_{ab}}(-1)[-2])^G = \QQ_{V_{ab}}(-1)[-2],
\end{flalign*}where $V_{ab}$ denotes $V_{a}\cap V_{b}$ etc.\end{proof}
\begin{rem}
\label{remA} The above scenario arises frequently via weighted blow-ups:
typically one has\[
\xymatrix{\cx' \ar @/_2pc/ [dd]_{f} \ar @{^(->} [r] \ar [d]^b & \mathrm{Bl}_{\uw}(\aff^{n+2}) = \PP_{\tilde{\Sigma}_{\uw}} \ar [d]^{\beta} \\ \fx \ar [d] \ar @{^(->} [r] & \aff^{n+2}_{x_0,\ldots ,x_{n+1}} \ar [ld]^{x_0 \mapsto t}_{p} \\ \aff^1_t}
\]where $\fx$ has a singularity at $\underline{0}$ arising from cyclic
base-change, $\uw=(1,w_{1},\ldots,w_{n+1})$ and $\tilde{\Sigma}_{\uw}=\mathrm{fan}\{e_{0},e_{1},\ldots,e_{n+1},e_{0}+\sum_{i=1}^{n+1}w_{i}e_{i}\}$.
(In particular, Prop. \ref{propA} explains Example \ref{ex1.5c},
which will be generalized in Part II.) The exceptional divisor
of $\beta$ is $\mathbb{WP}(\uw)\cong\PP_{\Sigma_{\uw}}$, with $\Sigma_{\uw}=\mathrm{fan}\{e_{1},\ldots,e_{n+1},-\sum_{i=1}^{n+1}w_{i}e_{i}\}$
the image of $\tilde{\Sigma}_{\uw}$ under the projection $e_{0}\mapsto-\sum w_{i}e_{i}$;
the proper transform of $t^{-1}(0)$ is $\PP_{\Sigma_{0}}(\cong\aff^{n+1})$
where $\Sigma_{0}=\mathrm{fan}\{e_{1},\ldots,e_{n+1}\}$. Assuming
that $\cx'$ meets $\PP_{\Sigma_{\uw}}$ with multiplicity $1$ (so
$D_{a}=\cx'\cap\PP_{\Sigma_{\uw}}$ and $D_{b}=\cx'\cap\PP_{\Sigma_{0}}$),
it will suffice to exhibit $\PP_{\tilde{\Sigma}_{\uw}}$ locally as
a finite quotient of $\aff^{n+2}$ not branched along $\PP_{\Sigma_{\uw}}$
or $\PP_{\Sigma_{0}}$.

Writing $\sigma_{i}:=\RR_{>0}\langle e_{0},\ldots,\hat{e}_{i},\ldots,e_{n+1},e_{0}+\sum_{i=1}^{n+1}w_{i}e_{i}\rangle$
and $N_{i}:=\ZZ\langle e_{0},\ldots,\hat{e}_{i},\ldots,e_{n+1},e_{0}+\sum_{i=1}^{n+1}w_{i}e_{i}\rangle$,
set $U_{i}:=\mathrm{Spec}(\CC[\underline{x}^{\sigma_{i}\cap\ZZ^{n+2}}])$
and $\tilde{U}_{i}:=\mathrm{Spec}(\CC[\underline{x}^{\sigma_{i}\cap N_{i}}])\cong\aff_{u_{0},\ldots,u_{n+1}}^{n+2}$
(where $u_{i}=x_{i}^{\frac{1}{w_{i}}}$, and $u_{j}=x_{j}u_{i}^{-w_{j}}$
for $j\neq i$). The natural, generically $w_{i}:1$ morphism $\tilde{U}_{i}\overset{\tau_{i}}{\twoheadrightarrow}U_{i}$
is the quotient by $\ZZ/w_{i}\ZZ\ni1\mapsto\{u_{i}\mapsto\zeta_{w_{i}}u_{i},\, u_{j}\mapsto\zeta_{w_{i}}^{-w_{j}}u_{j}\,(j\neq i)\}$.
Since $w_{0}=1$ we get $t=x_{0}=\frac{x_{0}}{u_{i}}u_{i}=u_{0}u_{i}$
(as a function) so that $\tau_{i}$ is not branched along $\PP_{\Sigma_{\uw}}$
or $\PP_{\Sigma_{0}}$, and $s|_{\tilde{U}_{i}}=t|_{U_{i}}\circ\tau_{i}=p\circ\beta\circ\tau_{i}$
(as a mapping) is a SSD as desired. Note that this exhibits the singularity
type in $U_{i}$ (at the origin) as $\frac{1}{w_{i}}(1,-1,-w_{1},\ldots,\hat{w}_{i},\ldots,-w_{n+1})$;
while if the proper transform of $\cx'$ passes through a point with
$u_{k}=0$ for $k\neq j$, the quotient yields a point of type $\frac{1}{\gcd(w_{i},w_{j})}(1,-1,-w_{1},\ldots,\hat{w}_{i},\ldots,\hat{w}_{j},\ldots,-w_{n+1})$
on $\cx'$. \end{rem}
\begin{example}
So in Example \ref{ex1.5c}, $\uw=(1,6,14,21)$ yields points of type
$\frac{1}{2}(1,-1,1)$, $\frac{1}{3}(1,-1,1)$, $\frac{1}{7}(1,-1,1)$
on $\cx'$, which become $\frac{1}{2}(1,-1)$, $\frac{1}{3}(1,-1)$,
$\frac{1}{7}(1,-1)$ (i.e. $A_{1},A_{2},A_{6}$) on $D_{a}$ and $D_{b}$.
\end{example}

\section{Singularities of the minimal model program}\label{S1.8}

Let $X$ be a projective variety with resolution $\tilde{X}\overset{\ve}{\to}X$,
and extend this\footnote{Note that typically $\tilde{X}$ is only a connected component of $X^0$.} to a cubical hyperresolution $\eb:\xb\to X$ \cite{PS}.
\begin{defn}
(i) $X$ has \emph{rational} singularities $\iff$ $\ox\overset{\simeq}{\to}R\ve_{*}\co_{\tilde{X}}$.

(ii) $X$ has \emph{du Bois} singularities $\iff$ $\ox\overset{\simeq}{\to}R\eb_{*}\ob$.
\end{defn}
In general we have $\gr_{F}^{0}H^{k}(X)\cong\HH^{k}(X,R\eb_{*}\ob)$,
so that if $X$ is du Bois then $\gr_{F}^{0}H^{k}(X)\cong H^{k}(\ox)$.
This last isomorphism clearly also holds if $X$ has rational singularities:
since $\ox\to R\ve_{*}\co_{\tilde{X}}$ always factors through $R\eb_{*}\ob$,
we get a diagram\begin{equation*}
\xymatrix{&H^k(\co_{\tilde{X}})
\\
H^k(X,\CC) \ar @/^1pc/ [ru] \ar [r] \ar @{->>} @/_1pc/ [rd] & H^k(\ox) \ar [u]^{X\text{ has RS}\implies}_{\cong} \ar [d]_{(*)} \ar [r]^{(*)} & \HH^k(X,R\eb_* \co_{X_0^{\bullet}}) \ar @/_1pc/ [lu] \ar @{<->} @/^1pc/ [ld]^{\cong}
\\
& \gr_F^0 H^k(X,\CC)}
\end{equation*}forcing $(*)$ to be injective and surjective. With more work \cite{Kv},
one can show that rational singularities are in fact du Bois.

Now consider a flat projective family $f:\cx\to\Delta$, with $\cx\backslash X_{0}$
smooth and $\cy\overset{\pi}{\to}\cx$ a log-resolution of $(\cx,X_{0})$
(i.e. $\cy$ smooth, $Y_{0}=\pi^{-1}(X_{0})$ SNCD) restricting to
an isomorphism $\cy\backslash Y_{0}\cong\cx\backslash X_{0}$. We
shall assume that $\pi$ extends to a morphism $\bar{\pi}:\bar{\cy}\to\bar{\cx}$
of projective varieties ($\bar{\cy}$ smooth, $\cong$ off $X_{0}$), and that $f$ is also extendable to an algebraic morphism (from $\bar{\cx}$ to a curve).
\begin{prop}
\label{prop1.8a} If $\cx$ has rational singularities, then $\sp$
induces isomorphisms
\[
\gr_{F}^{0}H^{k}(X_{0})\cong\gr_{F}^{0}H_{\lm}^{k}(X_{t})^{T_{0}^{\text{ss}}}
\]
for all $k$.\end{prop}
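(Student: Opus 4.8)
The plan is to pass to the smooth total space $\cy$ of the log resolution $\pi\colon\cy\to\cx$, apply Corollary~\ref{cor1.4} there, and then descend back down to $X_0$ using that $\cx$ has rational singularities. As a preliminary reduction I would first note that $\gr_F^0 H^k_{\lm}(X_t)^{T_0}\cong\gr_F^0 H^k_{\lm}(X_t)^{T_0^{\text{ss}}}$ for all $k$: since $T_0-I$ is invertible on the non-unipotent part, $H^k_{\lm}(X_t)^{T_0}=\ker\bigl(N_0\mid H^k_{\lm}(X_t)^{T_0^{\text{ss}}}\bigr)$, so the inclusion of $T_0$-invariants into $T_0^{\text{ss}}$-invariants is a morphism of MHS whose cokernel is $\cong N_0\bigl(H^k_{\lm}(X_t)^{T_0^{\text{ss}}}\bigr)$, a sub-MHS of $H^k_{\lm}(X_t)^{T_0^{\text{ss}}}(-1)$; as $N_0$ has type $(-1,-1)$ and $F^0 H^k_{\lm}(X_t)=H^k_{\lm}(X_t)$, this image lies in $F^1$, so has vanishing $\gr_F^0$, and exactness of $\gr_F^0$ on MHS gives the claim.

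Next I would apply Corollary~\ref{cor1.4} to $\cy$. Its hypotheses are met: $\cy$ is smooth, $f\circ\pi\colon\cy\to\Delta$ is flat projective and smooth over $\Delta^*$ (as $\cy\setminus Y_0\cong\cx\setminus X_0$), and it is the restriction of an algebraic family over a curve by the assumed extension $\bar\pi\colon\bar\cy\to\bar\cx$ and the extendability of $f$. Since the generic fibre $Y_t\cong X_t$ carries the same monodromy operator $T_0$, this yields $\gr_F^0 H^k(Y_0)\cong\gr_F^0 H^k_{\lm}(X_t)^{T_0}$, hence by the preliminary reduction $\gr_F^0 H^k(Y_0)\cong\gr_F^0 H^k_{\lm}(X_t)^{T_0^{\text{ss}}}$, compatibly with $\sp$. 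The proposition then follows once one shows that $\pi_0^*\colon H^k(X_0)\to H^k(Y_0)$ is an isomorphism on $\gr_F^0$.

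This last isomorphism is the heart of the proof and the step I expect to be the main obstacle. I would compare the two sides through the Du Bois complex: $\gr_F^0 H^k(X_0)\cong\HH^k(X_0,\underline{\Omega}^0_{X_0})$ holds in general, while $\gr_F^0 H^k(Y_0)\cong H^k(Y_0,\co_{Y_0})$ since the SNC variety $Y_0$ is du Bois, so it suffices to identify $\underline{\Omega}^0_{X_0}$ with $R(\pi_0)_*\co_{Y_0}$. For this I would use that $\cx$ has rational singularities, so $R\pi_*\co_{\cy}\cong\co_{\cx}$; restricting along the Cartier divisor $X_0$ in the derived sense and applying the projection formula gives $R(\pi_0)_*\co_{\pi^{-1}(X_0)}\cong\co_{X_0}$, where $\pi^{-1}(X_0)$ carries its natural scheme structure. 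Combined with the descent triangle for $\underline{\Omega}^0$ along the birational proper morphism $\pi_0$, this should give the identification. The difficulty is twofold: $\pi$ need not be semistable, so $\pi^{-1}(X_0)$ may be non-reduced and must be compared with the reduced fibre $Y_0$; and the correction term supported over $\sing(X_0)$ in the Du Bois descent triangle on $X_0$ must be matched with the corresponding data upstairs. It is precisely here — and not merely in the fact that $\cx$ is du Bois, since $X_0$ itself need not be du Bois — that the rational-singularities hypothesis on $\cx$ is essential.
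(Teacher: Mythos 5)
Your first two steps match the paper's proof. The reduction $\gr_F^0 H^k_{\lm}(X_t)^{T_0^{\text{ss}}}\cong \gr_F^0 H^k_{\lm}(X_t)^{T_0}$ is asserted without comment in the paper but is exactly the observation you make (the discrepancy is a sub-MHS of a Tate-twisted effective MHS, hence has vanishing $\gr_F^0$), and the identification $\gr_F^0 H^k(Y_0)\cong\gr_F^0 H^k_{\lm}(X_t)^{T_0}$ via Corollary \ref{cor1.4} on the smooth $\cy$ (equivalently, via the fact that $\text{level}(H^k_{\pha}(Y_0))\leq k-2$) is also what the paper does.

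Where you diverge is the third, essential step, $\gr_F^0 H^k(X_0)\cong\gr_F^0 H^k(Y_0)$, and there you have a genuine gap. The paper sidesteps all of the difficulties you flag: it takes $\gr_F^0$ of the discriminant-square Mayer--Vietoris sequence of MHS
\[
\cdots\to H^k(\bar\cx)\to H^k(\bar\cy)\oplus H^k(X_0)\to H^k(Y_0)\to H^{k+1}(\bar\cx)\to\cdots
\]
(PS, Thm.\ 5.35), uses that rational implies du Bois to rewrite $\gr_F^0 H^k(\bar\cx)\cong H^k(\co_{\bar\cx})$ and $\gr_F^0 H^k(\bar\cy)\cong H^k(\co_{\bar\cy})$, and then uses $\co_{\bar\cx}\simeq R\bar\pi_*\co_{\bar\cy}$ to see that $H^k(\co_{\bar\cx})\to H^k(\co_{\bar\cy})$ is an isomorphism; the result drops out by a diagram chase. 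Crucially, this argument sees only the \emph{reduced} $Y_0$ and never needs to compare $Y_0$ with the scheme-theoretic $\pi^{-1}(X_0)$, nor to compute $\underline{\Omega}^0_{X_0}$ at all. Your route, by contrast, asks for an identification $\underline{\Omega}^0_{X_0}\cong R(\pi_0)_*\co_{Y_0}$ which you yourself cannot establish. Beyond the reduced/non-reduced mismatch you already point out, note that $\pi_0\colon Y_0\to X_0$ is \emph{not} birational ($Y_0$ is reducible and $\pi_0$ collapses the exceptional components to the singular locus of $X_0$), so there is no ``descent triangle for $\underline{\Omega}^0$ along the birational proper morphism $\pi_0$'' of the kind you invoke; that mechanism would apply to $\tilde X_0\to X_0$, not to $\pi_0$. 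As written, the last paragraph names the obstacle but does not surmount it, so the proposal does not constitute a proof; the Mayer--Vietoris route is both shorter and avoids these issues entirely.
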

\begin{proof}
We have $\gr_{F}^{0}H_{\lm}^{k}(X_{t})^{T_{0}^{\text{ss}}}\cong\gr_{F}^{0}H_{\lm}^{k}(X_{t})^{T_{0}}\overset{\cong}{\underset{\sp}{\leftarrow}}\gr_{F}^{0}H^{k}(Y_{0})$,
since $\cy$ is smooth and $\text{level}(H_{\pha}^{k}(Y_{0}))\leq k-2$.
Taking $\gr_{F}^{0}$ of the Mayer-Vietoris sequence (cf. \cite[Thm. 5.35]{PS})
\[
\to H^{k}(\bar{\cx})\to H^{k}(\bar{\cy})\oplus H^{k}(X_{0})\to H^{k}(Y_{0})\to
\]
and using that $\bar{\cx}$ is (rational $\implies$) du Bois yields
\[
\to H^{k}(\co_{\bar{\cx}})\to H^{k}(\co_{\bar{\cy}})\oplus\gr_{F}^{0}H^{k}(X_{0})\to\gr_{F}^{0}H^{k}(Y_{0})\to.
\]
Now $\co_{\bar{\cx}}\overset{\simeq}{\to}R\bar{\pi}_{*}\co_{\bar{\cy}}$
$\implies$ $H^{k}(\co_{\bar{\cx}})\overset{\cong}{\to}H^{k}(\co_{\bar{\cy}})$
$\implies$ $\gr_{F}^{0}H^{k}(X_{0})\cong\gr_{F}^{0}H^{k}(Y_{0})$
gives the result.
\end{proof}
We next make use of an ``inversion of adjunction'' result of Schwede
\cite{Sw}, that when a Cartier divisor (with smooth complement) is
du Bois, the ambient variety has only rational singularities. However,
this requires us to place an additional constraint on $\cx$ to ensure
that $X_{0}$ is Cartier and remains so after base-change.
\begin{thm}
\label{prop1.8b} If $X_{0}$ has du Bois singularities, and $\cx$
has reduced special fiber \emph{(}i.e. $(f)=X_{0}$\emph{)}, then
\begin{equation}\label{eq1.8a}
\gr_F^0 H^k(X_0)\cong \gr_F^0 H^k_{\lm} (X_t)^{T_0^{\text{ss}}} \cong \gr_F^0 H_{\lm}^k (X_t)\;\;\;(\forall k).
\end{equation}\end{thm}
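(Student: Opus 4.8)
The plan is to combine Proposition~\ref{prop1.8a} with Schwede's inversion of adjunction and a cyclic base change. The hypothesis $(f)=X_0$ says precisely that $X_0$ is a Cartier divisor in $\cx$; since moreover $\cx\setminus X_0$ is smooth and $X_0$ is du Bois, Schwede's result \cite{Sw} gives that $\cx$ has rational singularities, and Proposition~\ref{prop1.8a} then produces the first isomorphism $\gr_F^0 H^k(X_0)\cong\gr_F^0 H^k_{\lm}(X_t)^{T_0^{\mathrm{ss}}}$, induced by $\sp$, for every $k$.

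For the second isomorphism I would pass to the cyclic base change $\rho\colon\cx''\to\cx$ along $t\mapsto t^{\kappa}$, with $\kappa$ chosen so that $(T_0^{\mathrm{ss}})^{\kappa}=I$, so that the monodromy of the new family $f''\colon\cx''\to\Delta$ is $T_0''=T_0^{\kappa}$ and satisfies $(T_0'')^{\mathrm{ss}}=(T_0^{\mathrm{ss}})^{\kappa}=I$. The point of the reduced-special-fiber hypothesis is that it is exactly what survives this base change: because $X_0$ is Cartier, a local computation (cf. $\S$\ref{S1.7.1}) shows the special divisor $(f'')$ of $\cx''$ is again reduced and coincides with $X_0$ as a scheme, hence is still du Bois and Cartier; moreover $\cx''\setminus X_0''\to\cx\setminus X_0$ is finite \'etale, so $\cx''\setminus X_0''$ is smooth, and the remaining standing hypotheses (a log resolution of $(\cx'',X_0'')$ extending to a projective compactification, algebraicity of $f''$ over a curve) pass through base change. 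Thus Schwede's theorem makes $\cx''$ rational, and Proposition~\ref{prop1.8a} for $\cx''$, together with $X_0''=X_0$ and $(T_0'')^{\mathrm{ss}}=I$, reads $\gr_F^0 H^k(X_0)\cong\gr_F^0 H^k_{\lm}(X_t'')$.

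To conclude I would compare the two statements. The ramified base change of the disc does not alter the nearby cycle complex or its Hodge filtration (only the monodromy action, hence the weight filtration, change), so $\gr_F^0 H^k_{\lm}(X_t'')$ is canonically $\gr_F^0 H^k_{\lm}(X_t)$; and $\sp$ is compatible with the base change, so the isomorphism obtained from $\cx''$ is again $\sp$-induced. Its image is therefore the image of $\gr_F^0 H^k(X_0)$ under $\sp$, which by the first paragraph equals $\gr_F^0 H^k_{\lm}(X_t)^{T_0^{\mathrm{ss}}}$. Hence the inclusion $\gr_F^0 H^k_{\lm}(X_t)^{T_0^{\mathrm{ss}}}\hookrightarrow\gr_F^0 H^k_{\lm}(X_t)$ is an equality, and this together with the first isomorphism is exactly \eqref{eq1.8a}.

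The main obstacle is not a single hard estimate but the bookkeeping surrounding the base change: one has to check that $\cx''$ --- which is genuinely more singular than $\cx$ along $X_0$ (for instance, even when $\cx$ is smooth, $\cx''$ typically is not) --- still satisfies every hypothesis needed to invoke Schwede's theorem and Proposition~\ref{prop1.8a}. The decisive observation, and the whole reason for the ``reduced special fiber'' assumption, is that Cartier-ness of $X_0$ forces $(f'')$ to remain reduced and equal to $X_0$, so that the ``du Bois special fiber'' property is preserved. A secondary point requiring care is the naturality of $\sp$ under ramified base change, which is what allows the two $\sp$-induced isomorphisms to be compared as maps out of the single object $\gr_F^0 H^k(X_0)$.
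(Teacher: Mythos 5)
Your proof is correct and takes essentially the same route as the paper: apply Schwede's inversion of adjunction to get rational singularities for $\cx$, invoke Proposition~\ref{prop1.8a}, then repeat for a cyclic base change $\cx''$ chosen so that $T''$ is unipotent, using that the reduced-Cartier hypothesis on $X_0$ persists under base change. Your write-up spells out in more detail than the paper why the hypotheses of Proposition~\ref{prop1.8a} and Schwede's theorem pass to $\cx''$, and why $\sp$ is compatible with the base change; one minor inaccuracy is the parenthetical claim that the weight filtration on $H^k_{\lm}$ changes under base change (it does not, since $N$ and $\kappa N$ induce the same monodromy weight filtration), but this plays no role in the argument.
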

\begin{proof}
Obviously $\cx$ has rational singularities by {[}op. cit.{]}, but
this also applies to any finite base-change. So taking $\cx'=\cx$
in the setting of $\S$\ref{S1.7.1}, Prop. \ref{prop1.8a} applies
in addition to $\cx''$, whose $T''=T_{0}^{\kappa}$ is unipotent.\end{proof}
\begin{rem}
If $\cx$ is smooth, then (regardless of whether $(f)=X_{0}$) one
can show that $X_{0}$ is du Bois iff $\co_{X_{0}}\overset{\simeq}{\to}R\pi_{*}^{0}\co_{Y_{0}}$
\cite{Sw}.\end{rem}
\begin{example}
To see the necessity of the $(f)=X_{0}$ requirement in Thm. \ref{prop1.8b},
consider a smooth $\cx$ with elliptic fibers over $\Delta^{*}$ and
$X_{0}$ a Kodaira type $\mathrm{IV}^{*}$ (``$E_{6}$'') fiber.
Then $(f)=3D_{1}+2(D_{2}+D_{3})+(D_{4}+D_{5}+D_{6})\neq\sum_{i}D_{i}=X_{0}$;
and sure enough, the conclusion of Thm. \ref{prop1.8b} fails (cf.
Example \ref{ex1.5a}).
\end{example}

\begin{example}
Assume $\Sigma=\{p\}$ is an isolated quasi-homogeneous singularity
of type $(f\sim)\, F=x^{2}+y^{3}+z^{6}+\lambda xyz$ ($\tilde{E}_{8}$)
resp. $G=x^{5}+y^{5}+z^{2}$ ($N_{16}$). In the first case, $X_{0}$
is du Bois. As we can see from Example \ref{ex1.5b}, the discrepancy
between $H^{2}(X_{0})\cong H_{\lm}^{2}(X_{t})^{T_{0}}$ and $H_{\lm}^{2}(X_{t})^{T_{0}^{6}}=\ker(N_{0})$
consists of $8$ $(1,1)$ classes, and neither differ from $H_{\lm}^{2}(X_{t})$
on $\gr_{F}^{0}$. Any base-change $F+t^{M}$ defines a rational $3$-fold
singularity, since (as one deduces from the absence of integral interior
points in the convex hull of $\{(2,0,0),\,(0,3,0),\,(0,0,6),\,(0,0,0)\}$)
the exceptional divisor $\cE$ of the weighted blow-up has $\Omega^{2}(\cE)=\{0\}$.

In the second case (where $T_0=T_0^{\text{ss}}$), $H^{2}(\cE)$ has Hodge numbers $(1,14,1)$, so
that $\gr_{F}^{0}$ of $H^{2}(X_{0})\cong H_{\lm}^{2}(X_{t})^{T_{0}}$
and $H_{\lm}^{2}(X_{t})$ differ by $1$. The point is that (while
$\cx$ is smooth) $X_{0}$ is not du Bois and neither is (say) $G+t^{10}$;
so in particular, $\cx''$ will not have rational singularities.
\end{example}
Returning to our resolution $\tilde{X}\overset{\ve}{\to}X$, assume
now%
\footnote{Serre's condition $S_{2}$ is ``algebraic Hartogs'': given any $Z\overset{\jmath}{\subset}X$
of codim.$\geq2$, $\jmath_{*}\co_{X\backslash Z}=\co_{X}$; so it
easily follows that normality is equivalent to $\ve_{*}\co_{\tilde{X}}=\ox$.%
}\begin{itemize}[leftmargin=0.5cm]
\item $X$ is normal (smooth in codim. 1, and satisfies $S_2$)
\item $X$ is $\QQ$-Gorenstein ($K_X$ is $\QQ$-Cartier)
\end{itemize}and write $K_{\tilde{X}}=\ve^{*}K_{X}+\sum_{i}m_{i}E_{i}$ ($E_{i}$
exceptional prime divisors).
\begin{defn}
$X$ has \emph{terminal} (resp. \emph{canonical}, \emph{log-terminal},
\emph{log-canonical}) singularities $\iff$ all $m_{i}$ are $>0$
(resp. $\geq0$, $>-1$, $\geq-1$). 
\end{defn}
A larger class of singularities is obtained by dropping the ``smooth
in codimension 1'' part of normality:
\begin{defn}
Assume $X$ satisfies $S_{2}$ and is $\QQ$-Gorenstein, and has only
normal-crossing singularities in codimension 1. Let $\hat{X}\to X$
be the normalization and $\hat{D}$ the conductor (inverse image of
the normal-crossing locus); let $\hat{Y}\overset{\hat{\pi}}{\to}\hat{X}$
be a log-resolution of $(\hat{X},\hat{D})$. Then $X$ has \emph{semi-log-canonical
(slc)} singularities $\iff$ the $m_{i}\geq-1$ in $K_{\hat{Y}}+\hat{\pi}_{*}^{-1}\hat{D}=\hat{\pi}^{*}(K_{\hat{X}}+\hat{D})+\sum_{i}m_{i}\hat{E}_{i}$
($\hat{E}_{i}$ exceptional).
\end{defn}
We have two related ``inversion of adjunction'' results here \cite{KM,Ka}:
if a Cartier divisor with smooth complement in a normal, $\QQ$-Gorenstein
variety is log-terminal (resp. slc), then the ambient variety has
only terminal (resp. canonical) singularities. In addition, we know
that log-terminal (resp. slc) singularities are rational (resp. du
Bois) \cite{Kv,KK,Ko}. Thus we arrive at the following
\begin{cor}
\label{cor1.8a} Assume our family $f:\cx\to\Delta$ has normal, $\QQ$-Gorenstein
total space, and reduced special fiber $X_{0}$.

\emph{(i)} If $X_{0}$ is slc, then \eqref{eq1.8a} holds.

\emph{(ii)} If $X_{0}$ is log-terminal, \eqref{eq1.8a} holds and
$W_{k-1}\gr_{F}^{0}H_{\lm}^{k}(X_{t})=\{0\}$.\end{cor}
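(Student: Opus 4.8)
The plan is to get both parts straight out of Theorem~\ref{prop1.8b}, together with the standard implications among the singularity classes, plus one short extra argument for the vanishing in part~(ii).

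\emph{Part (i).} First I would note that slc singularities are du Bois, by Koll\'ar--Kov\'acs \cite{KK} (see the Remark following Theorem~\ref{thm-intro1}). The standing hypothesis that $f$ has reduced special fiber means exactly $(f)=X_0$, so $X_0$ is a Cartier divisor in $\cx$ with smooth complement; the remaining running hypotheses of Theorem~\ref{prop1.8b} ($\cx$ normal and $\QQ$-Gorenstein, $f$ projective and extendable to an algebraic morphism) are in force, and hence \eqref{eq1.8a} is immediate from Theorem~\ref{prop1.8b}. No further work is needed here.

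\emph{Part (ii).} A log-terminal singularity is rational, hence du Bois \cite{Kv,KM}, so \eqref{eq1.8a} is already supplied by part~(i); it remains only to prove $W_{k-1}\gr_F^0 H^k_{\lm}(X_t)=\{0\}$. I would proceed in three steps. First, transport the question onto the special fiber: the composite $H^k(X_0)\overset{\sp}{\to}H^k_{\lm}(X_t)^{T_0}\hookrightarrow H^k_{\lm}(X_t)$ is a morphism of MHS which, by \eqref{eq1.8a}, is an isomorphism on $\gr_F^0$, so strictness with respect to $W$ plus an elementary dimension count on the associated graded gives $\gr^W_m\gr_F^0 H^k(X_0)\cong\gr^W_m\gr_F^0 H^k_{\lm}(X_t)$ for every $m$; thus it suffices to show $W_{k-1}\gr_F^0 H^k(X_0)=\{0\}$. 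Second, fix a smooth projective resolution $\ve\colon\tilde X_0\to X_0$: under the identification $\gr_F^0 H^k(-)\cong H^k(\co_{(-)})$ valid for du Bois varieties (cf.\ \S\ref{S1.8}), and functorial in $\ve$, the map $\gr_F^0\ve^*$ is identified with the pullback $H^k(\co_{X_0})\to H^k(\co_{\tilde X_0})$, which is an \emph{isomorphism} precisely because $X_0$ has rational singularities, i.e.\ $\co_{X_0}\overset{\simeq}{\to}R\ve_*\co_{\tilde X_0}$. Third, $\ve^*\colon H^k(X_0)\to H^k(\tilde X_0)$ is a morphism of MHS with target pure of weight $k$, hence it kills $W_{k-1}H^k(X_0)$; passing to $\gr_F^0$, the injective map $\gr_F^0\ve^*$ kills $W_{k-1}\gr_F^0 H^k(X_0)$, forcing that group to vanish.

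The step I expect to require the most care is the second one: checking that the canonical identification $\gr_F^0 H^k(-)\cong H^k(\co_{(-)})$ is natural with respect to $\ve^*$, so that the rationality isomorphism $\co_{X_0}\overset{\simeq}{\to}R\ve_*\co_{\tilde X_0}$ genuinely manifests as an isomorphism on $\gr_F^0$-cohomology. This follows from functoriality of the filtered Du Bois complex, but it is the real content, and it is exactly here that merely du Bois (in particular slc) singularities do not suffice: for a Kodaira $\mathrm{I}_n$ fiber one has $W_0\gr_F^0 H^1(X_0)\neq\{0\}$, so the extra vanishing in (ii) genuinely needs rationality of $X_0$ rather than just \eqref{eq1.8a}. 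Everything else is routine bookkeeping with the strictness of morphisms of mixed Hodge structures.
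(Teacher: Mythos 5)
Your proposal is correct and follows essentially the same route as the paper: part (i) is immediate from Theorem~\ref{prop1.8b} via slc $\Rightarrow$ du Bois, and part (ii) proceeds by transporting the weight statement to $X_0$ via \eqref{eq1.8a}, then using the chain $\gr_F^0 H^k(X_0)\cong H^k(\co_{X_0})\cong H^k(\co_{\tilde X_0})\cong\gr_F^0 H^k(\tilde X_0)$ (du Bois for the outer isomorphisms, rationality for the middle one) together with purity of $H^k(\tilde X_0)$. The only difference from the paper's (terse) proof is that you make explicit the transfer step through $\sp$ and the functoriality of $\gr_F^0 H^k(-)\cong H^k(\co_{(-)})$, both of which the paper leaves implicit.
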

\begin{proof}
For (ii), $\gr_{F}^{0}H^{k}(X_{0})\cong H^{k}(\co_{X_{0}})$ (since
$X_{0}$ is du Bois) and $\gr_{F}^{0}H^{k}(\tilde{X}_{0})\cong H^{k}(\co_{\tilde{X}_{0}})$,
where $H^{k}(\tilde{X}_{0})$ is pure of weight $k$. Since $X_{0}$
has rational singularities, $H^{k}(\co_{X_{0}})\cong H^{k}(\co_{\tilde{X}_{0}})$.
\end{proof}
We can think of (i) in terms of Hodge-Deligne numbers as saying that
\[
h^{k}(X_{0})^{p,q}=h_{\lm}^{k}(X_{t})^{p,q}\;\;\text{for}\; p\cdot q=0,
\]
 and (ii) as saying that moreover both are zero for $(p,q)=(r,0)$
or $(0,r)$ with $r\neq k$.

In the log-terminal case, we have \begin{equation} \label{eq1.8b}
\gr_{F}^{1}H_{\lm}^{k}(X_{t})=\gr_{F}^{1}H_{\lm}^{k}(X_{t})^{T^{\text{un}}}
\end{equation}by (ii), which might kindle hopes that perhaps this equals $\gr_{F}^{1}H^{k}(X_{0})$.
Unfortunately, nothing quite this strong is true at any level of generality
one can specify in terms of the singularity types described above:
for $n=3$, the nicest such scenario would be where $\cx$ is smooth
and $X_{0}$ has Gorenstein terminal ($\iff$ isolated compound du
Val) singularities.
\begin{example}
Such a singularity is given locally by $f\sim x^{2}+y^{2}+zw^{4}+z^{2}w^{2}+z^{4}w$,
whose contribution to $H_{\lm}^{3}(X_{0})$ has nontrivial $(1,1)$
and $(1,2)$ parts, with neither part $T^{\text{ss}}$-invariant hence
neither appearing in $H^{3}(X_{0})$. This assertion will be justified
in Part II.
\end{example}
In any case, here is something one \emph{can} say:
\begin{thm}
\label{cor1.8b} If $X_{0}$ is log-terminal \emph{(}or more generally,
has rational singularities\emph{)},\footnote{We emphasize that we do \emph{not} assume isolated singularities here.} and $\cx$ is smooth, then $\gr_{F}^{1}H^{k}(X_{0})\cong\gr_{F}^{1}(H_{\lm}^{k}(X_{t}))^{T^{\text{ss}}}$.\end{thm}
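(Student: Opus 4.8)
The plan is to feed the vanishing cycle exact sequence \eqref{eq1.4k} (available since $\cx$ is smooth) through $\gr_F^1$. As morphisms of mixed Hodge structures are strictly compatible with $F$, the functor $\gr_F^1$ is exact; applying it to \eqref{eq1.4k} and using that $\can$ restricts to an isomorphism on the non‑unipotent ($T^{\mathrm{ss}}_\sigma$‑)parts $\can^n\colon(H^\bullet_{\lm})^n\xrightarrow{\sim}(H^\bullet_{\van})^n$, those pieces cancel and one is left with a long exact sequence
\begin{equation*}
\cdots\to\gr_F^1 H^k_{\pha,\sigma}\to\gr_F^1 H^k(X_0)\xrightarrow{\ \sp\ }\gr_F^1\bigl(H^k_{\lm,\sigma}(X_t)\bigr)^u\xrightarrow{\ \can^u\ }\gr_F^1\bigl(H^k_{\van,\sigma}(X_t)\bigr)^u\to\gr_F^1 H^{k+1}_{\pha,\sigma}\to\cdots
\end{equation*}
Here the connecting map is a restriction of the MHM map $\phi_t\to\ci_\sigma^*[1]$, which is $T_\sigma$‑equivariant for the trivial action on the stalk $\ci_\sigma^*$; hence it annihilates $(T_\sigma-I)\phi_t$, so by Prop.\ \ref{prop1.4}(i) the group $H^k_{\pha,\sigma}=\delta\bigl(H^{k-1}_{\van,\sigma}(X_t)\bigr)$ is in fact a quotient of $(H^{k-1}_{\van,\sigma}(X_t))^u$. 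Consequently the whole theorem reduces to the \emph{single} vanishing $\gr_F^1\bigl(H^k_{\van,\sigma}(X_t)\bigr)^u=0$ for all $k$: this forces $\gr_F^1 H^k_{\pha,\sigma}=0$ (a quotient of the previous group) and then the displayed sequence collapses to $\sp\colon\gr_F^1 H^k(X_0)\xrightarrow{\sim}\gr_F^1(H^k_{\lm,\sigma}(X_t))^u$. (This is exactly the $p=1$ analogue of the $p=0$ statements: the $\gr_F^0$ of the vanishing cohomology vanishes whenever $X_0$ is du Bois, which is the content of \eqref{eq1.8a}.)

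To obtain $\gr_F^1(H^k_{\van,\sigma})^u=0$, I would first reduce to unipotent monodromy exactly as in the proof of Theorem \ref{prop1.8b}: after the base change $t\mapsto t^\kappa$ with $(T^{\mathrm{ss}}_\sigma)^\kappa=I$, the new total space $\cx''$ is a cyclic cover of the smooth $\cx$ branched along the rational (hence du Bois) Cartier divisor $X_0=(f)$, so by Schwede's inversion of adjunction $\cx''$ has rational singularities while $T''$ is unipotent; then on $\cx''$ one has $(H^k_{\van})^u=H^k_{\van}$, and it suffices to prove $\gr_F^1 H^k_{\van}=0$ there. Passing to a log resolution $\pi\colon\cy''\to\cx''$ ($Y_0=\tilde X_0\cup\cE$ a reduced SNCD, $\pi$ an isomorphism off $X_0$) with induced resolution $\nu\colon\tilde X_0\to X_0$, I would then run the Du Bois/Mayer--Vietoris argument of Prop.\ \ref{prop1.8a} with the first Du Bois complex $\underline\Omega^1$ in place of $\underline\Omega^0$: using the Hodge‑to‑de Rham degeneration $\gr_F^1 H^j(V)\cong\HH^{j-1}(V,\underline\Omega^1_V)$ for proper $V$, Saito's description of the Hodge filtration on $\ppsi_{t,1}\QQ^H$ and $\pphi_{t,1}\QQ^H$ by relative logarithmic forms on the SNCD side, and the Decomposition Theorem (Thm.\ \ref{th1.2}) for $\pi$ to isolate the $\IC_{\cx''}$‑summand, the whole problem becomes the coherent comparison $\underline\Omega^1_{X_0}\to R\nu_*\Omega^1_{\tilde X_0}$ (more precisely its relative‑log enhancement on the total spaces).

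The main obstacle is precisely that comparison. Rationality of the hypersurface $X_0$ directly gives $\co_{X_0}\xrightarrow{\ \sim\ }R\nu_*\co_{\tilde X_0}$, which powers all the $\gr_F^0$ results; but the naive $\gr_F^1$ analogue $\underline\Omega^1_{X_0}\xrightarrow{\ \sim\ }R\nu_*\Omega^1_{\tilde X_0}$ is the defining property of ``$1$‑rational'' singularities and is strictly stronger than rationality---it already fails for a generic compound du Val threefold point, which is exactly the source of the $\gr_F^1$ discrepancy in the example following \eqref{eq1.8b}. What rescues plain rationality is that the discrepancy between the two sides is concentrated in classes on which $T^{\mathrm{ss}}_\sigma$ acts nontrivially, hence is invisible to $(\,\cdot\,)^u$; making this precise---pinning down where in the monodromy weight filtration of $H^k_{\lm,\sigma}$ the ``extra'' part of $\gr_F^1\underline\Omega^1_{X_0}$ can sit, and checking $(T^{\mathrm{ss}}_\sigma-I)$ is injective there---is the technical heart of the argument, and is where the bookkeeping of $\S$\ref{S1.8} together with Saito's appendix machinery does the work.
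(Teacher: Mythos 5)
There is a genuine gap, and you have in fact put your finger on it: the whole argument turns on showing $\gr_F^1\bigl(H^k_{\van,\sigma}(X_t)\bigr)^{T^{\mathrm{ss}}}=0$, and the Du Bois / Mayer--Vietoris route (mimicking Prop.~\ref{prop1.8a} with $\underline{\Omega}^1$ in place of $\co$) cannot deliver this, because the coherent isomorphism $\underline{\Omega}^1_{X_0}\simeq R\nu_*\Omega^1_{\tilde X_0}$ is a strictly stronger condition than rationality and does fail in the intended applications. Your proposal ends at precisely the point where the paper's proof begins, and the passage to unipotent monodromy via base change (with $\cx''$ acquiring rational singularities by Schwede) is an unnecessary detour that does not create new leverage on the Hodge filtration of the vanishing cycles.

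The ingredient you are missing is a theorem of M.~Saito \cite[Thms.~0.4--0.6]{Sa4}: if $\cx$ is smooth and $X_0=f^{-1}(0)$ has rational singularities (no isolatedness hypothesis), then $F^n\bigl(\pphi_f\QQ_{\cx}[n+1]\bigr)=\{0\}$ as a filtered $\Dc$-module / MHM. The paper's proof combines this with the self-duality $\DD\,\pphi_f^u\QQ_{\cx}[n+1]\cong\pphi_f^u\QQ_{\cx}[n+1](n+1)$ (a consequence of \cite[(2.6.2)]{mhm} and $\DD\QQ_{\cx}[n+1]=\QQ_{\cx}[n+1](n+1)$). Pushing forward to a point, self-duality yields a perfect pairing
\begin{equation*}
H^j_{\van}(X_t)^{T^{\mathrm{ss}}}\times H^{2n-j}_{\van}(X_t)^{T^{\mathrm{ss}}}\longrightarrow\QQ(-n-1),
\end{equation*}
under which $F^nH^{2n-j}_{\van}(X_t)^{T^{\mathrm{ss}}}$ is the annihilator of $F^2H^j_{\van}(X_t)^{T^{\mathrm{ss}}}$. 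Saito's vanishing kills $F^n$ on the left, hence $H^j_{\van}(X_t)^{T^{\mathrm{ss}}}=F^2H^j_{\van}(X_t)^{T^{\mathrm{ss}}}$, i.e.\ $\gr_F^0=\gr_F^1=0$ on the $T^{\mathrm{ss}}$-invariant vanishing cohomology, for all $j$. Feeding this into the $T^{\mathrm{ss}}$-invariant part of the vanishing cycle sequence (as you began to do) finishes the proof. Notice that this bypasses the $\underline{\Omega}^1$-comparison entirely: the pairing converts a statement about the \emph{top} of the Hodge filtration (Saito's $F^n=0$, which is where rationality genuinely controls the vanishing cycle module) into the needed statement about the \emph{bottom} ($\gr_F^0=\gr_F^1=0$), which is exactly the passage your Du Bois-style approach could not make.
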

\begin{proof}
Begin by observing that under the duality functor $\DD$ on MHM we have $\DD \QQ_{\cx}[n+1]=\QQ_{\cx}[n+1](n+1)$ hence (on $X_0$)
\begin{equation}\label{eqf1}
\DD \pphi_f^u \QQ_{\cx}[n+1]=\pphi_f^u \QQ_{\cx}[n+1](n+1)
\end{equation}
by \cite[(2.6.2)]{mhm}.  Since $\HH^{j-n}(X_0,\pphi_f^u \QQ_{\cx}[n+1])=H^j_{\van}(X_t)^{T^{\text{ss}}}$ by definition, and
\begin{multline*}
\HH^{j-n}(X_0,\DD\,\pphi_f^u \QQ_{\cx}[n+1])\cong \HH^{n-j}(X_0,\pphi_f^u \QQ_{\cx}[n+1])^{\vee}\\=(H^{2n-j}_{\van}(X_t)^{T^{\text{ss}}})^{\vee},
\end{multline*}
taking the direct image of \eqref{eqf1} by $X_0\to \mathrm{pt}$ induces ($\forall j\in \ZZ$) a perfect pairing
\begin{equation}\label{eqf2}
H^j_{\van}(X_t)^{T^{\text{ss}}}\times H^{2n-j}_{\van}(X_t)^{T^{\text{ss}}}\to \QQ(-n-1).	
\end{equation}
In particular, $F^n H^{2n-j}_{\van}(X_t)^{T^{\text{ss}}}$ is dual to $H^j_{\van}(X_t)^{T^{\text{ss}}}/F^2$ in \eqref{eqf2}.

Now recall that $X_0$ has (not necessarily isolated) rational singularities, and $\cx$ is smooth.  By a result of M. Saito \cite[Thms. 0.4-0.6]{Sa4}, we therefore have 
\begin{equation}\label{eqf3}
\left( F^n (\pphi_f^u \QQ_{\cx}[n+1]) \subseteq \right) \, F^n (\pphi_f \QQ_{\cx}[n+1])=\{0\}	,
\end{equation}
where $\QQ_{\cx}[n+1]$ is interpreted as a MHM and $F^n$ is $F_{-n}$ in \emph{op. cit.} Again taking $H^{n-j}$ of the direct image, we conclude ($\forall j\in\ZZ$) that $F^n H^{2n-j}_{\van}(X_t)^{T^{\text{ss}}} =\{0\}$ hence $H^j_{\van}(X_t)^{T^{\text{ss}}}=F^2 H^j_{\van}(X_t)^{T^{\text{ss}}}$.  Taking $\gr_F^1$ of the $T^{\text{ss}}$-invariant part of the vanishing cycle sequence \eqref{eqI2}, the result follows.
\end{proof}

\begin{rem}
\textbf{(i)} The main issue dealt with in Thm. \ref{cor1.8b} is the vanishing of the $(1,k-1)$ part of $H^k_{\pha}(X_0)$.  Indeed, one reduces to this statement as follows: 
taking $T^{\text{ss}}$-invariants of \eqref{eq1.8b} gives $\gr^1_F H^k_{\lm}(X_t )^{T^{\text{ss}}} \cong \gr^1_F H^k_{\lm}(X_t )^T$; while applying C-S for $\cx$ smooth (Theorem \ref{thMain}) yields $\gr^1_F H^k_{\lm}(X_t )^T \cong \sp (\gr^1_F H^k(X_0))$, and $\ker\{\sp: H^k(X_0)\to H^k_{\lm}(X_t)\} =\delta(H_{\van}^{k-1}(X_t))$ has pure weight $k$ by Proposition \ref{prop1.4}.

\textbf{(ii)} According to M. Saito \cite{Sa-letter}, for $X_0$ du Bois [resp. rational], the conclusions of Thm. \ref{prop1.8b} and Cor. \ref{cor1.8a}(i) [resp. Cor. \ref{cor1.8a}(ii) and Thm. \ref{cor1.8b}] hold if we assume $\cx$ is smooth K\"ahler, $f$ is proper, and $X_0$ is reduced --- in particular, one need not assume that $f$ extends to an algebraic morphism (or that $\cx$ extends to a projective variety).

\textbf{(iii)} The result of Thm. \ref{prop1.8b} also holds for $\cx$ a complex analytic space (neither smooth nor extendable-to-algebraic) provided we assume that $\cx\setminus X_0$ is smooth, $f$ is \emph{projective} and $X_0$ is a reduced and irreducible divisor with \emph{rational} singularities \cite{KLS}.
\end{rem}

One can say quite a bit more with the aid of spectra, especially in the case of isolated singularities.  For example, in Part II we will show that when $\cx$ is smooth and $X_0$ has isolated $k$-log-canonical singularities in the sense of \cite{MP}, one has $\gr_F^j H^k (X_0) \cong \gr_F^j H^k_{\lm}(X_t)$ for $j=0,\ldots,k$.

\section{Decomposition Theorem over a polydisk}\label{S1.9}

We conclude by elaborating the consequences of Theorem \ref{th1.2} for the simplest multiparameter setting of all.  Let $f:\cx\to\Delta^r$ be a projective map of relative dimension $n$, equisingular\footnote{More precisely, we assume that the restrictions of $R^i f_* \IC_{\cx}\QQ$ to $U_{\{I\}}$ are local systems ($\forall i,I$).} over $(\Delta^*)^r$ and each ``coordinate $(\Delta^*)^k$''; and take $\mathcal{K}^{\bullet}=\IC_{\cx}$.  For notation we shall use:
\begin{itemize}[leftmargin=0.5cm]
\item $s_1,\ldots,s_r$ for the disk coordinates;
\item $U_{\{I\}}$ [resp. $\cs_{\{I\}}$] for the coordinate $(\Delta^*)^{r-|I|}$ [resp. $\Delta^{r-|I|}$] where $s_i = 0$ ($\forall i\in I$);
\item $\Delta^r \underset{\imath_c}{\hookleftarrow} \cs_c :=\cup_{|I|=c}\cs_{\{I\}} \underset{\jmath_c}{\hookleftarrow} \cup_{|I|=c}U_{\{I\}}=:U_c$; and
\item $\cup_{|I|=c}\{f_{\{I\}}^{\circ}\}:=f_c^{\circ}:\,\cx_c^{\circ} \rightarrow U_c$ resp. $\cx_c \underset{f_c}{\rightarrow} \cs_c$ for restrictions of $f$. 
\end{itemize}
In \eqref{eq1.2a}, $Z_{d(=r-c)}$ is replaced by $\cs_c$, and the pure weight-$(j+n+c)$ VHS $\VV_{r-c}^j$ over $U_c$ is rewritten $\mathsf{H}_c^{j+n+c}$ (restricting to $\mathsf{H}^{j+n+c}_{\{I\}}$ on each $U_{\{I\}}$ with $|I|=c$). For $c=0$, we write $\ch^{j+n}:= \mathsf{H}_0^{j+n}$, with fibers $\IH^{j+n}(X_{\underline{s}})$ ($s\in(\Delta^*)^r$) and monodromies $T_1,\ldots,T_r$.  For $c=1$, the $\mathsf{H}_{\{i\}}^{j+n+1}$ are the phantom $\IH$'s of fibers of $f_{\{i\}}^{\circ}$.

\begin{rem}
When $\cx$ is smooth, $\mathcal{K}^{\bullet}=\QQ_{\cx}[n+r]$, and the $\mathsf{H}^{j+n+c}_c$ are (pure) sub-VMHS of $R^{j+n+c}(f^{\circ}_c)_*\QQ_{\cx_c^{\circ}}$, with $\ch^{j+n}\cong R^{j+n}(f_0^{\circ})_* \QQ_{\cx_0^{\circ}}$.	
\end{rem}

With this indexing by codimension, the terms of \eqref{eq1.2a} become 
\begin{equation}\label{eq1.9.1}
\pr^j f_* \IC_{\cx} \simeq \oplus_c \imath^c_* \jmath_{!*}^c \mathsf{H}_c^{j+n+c}[r-c],	
\end{equation}
so that
\begin{equation}\label{eq1.9.2}
\begin{aligned}
\IH^m(\cx) &\cong \HH^{m-n-r}(\Delta^r,R f_* \IC_{\cx})\\ 
&\cong \oplus_{j,c} \IH^{m-n-c-j}(\cs_c,\mathsf{H}_c^{j+n+c})\\
&\cong	\oplus_{c,\ell} \IH^{\ell}(\cs_c,\mathsf{H}_c^{m-\ell})
\end{aligned}
\end{equation}
where $\ell:=m-(j+n+c)$.  There are two things to note here:  first, that\footnote{For simplicity, we write this as $\IH^{\ell} (\mathsf{H}^{m-\ell}_c)_{\uo}$ below; this notation means the \emph{stalk} cohomology $H^{\ell-r+c}\imath_{\uo}^* \imath^c_* \jmath^c_{!*} (\mathsf{H}^{m-\ell}_c [r-c])$, \emph{not} the (costalk) cohomology with \emph{support} at $\uo$.} $\IH^{\ell}(\cs_c,\mathsf{H}^{m-\ell}_c)=\oplus_{|I|=c}\IH^{\ell}(\cs_{\{I\}},\mathsf{H}_{\{I\}}^{m-\ell})=\oplus_{|I|=c}\IH^{\ell}(\mathsf{H}_{\{I\}}^{m-\ell})_{\uo}$ are really just sums of local $\IH$ groups at $\uo$. These are naturally endowed with \emph{mixed} Hodge structures by setting $\mathsf{H}_{\{I\},\lm}^* := (\prod_{j\notin I} \psi_{s_j})\mathsf{H}^*_{\{I\}}$ (or just $H_{\lm}^* :=\psi_{s_1}\cdots \psi_{s_r}\ch^*$ for $c=0$) and defining Koszul complexes $\mathscr{K}^{\bullet}(\mathsf{H}_{\{I\}}^*)$ by 
\begin{equation}\label{eq1.9.3}
\mathsf{H}^*_{\{I\},\lm} \to \oplus_{j\notin I}N_j \mathsf{H}_{\{I\},\lm}^*(-1) \to \oplus_{j_1 <j_2 \notin I}N_{j_1}N_{j_2}\mathsf{H}_{\{I\},\lm}^*(-2)\to \cdots\; ;
\end{equation}
then (as MHSs)
\begin{equation}\label{eq1.9.4}
\IH^{\ell}(\mathsf{H}_{\{I\}}^*)_{\uo} \cong H^{\ell}(\mathscr{K}^{\bullet}(\mathsf{H}_{\{I\}}^*))^{G_I}	
\end{equation}
where $G_I$ is the (finite) group generated by the $\{T^{\text{ss}}_j \}_{j\notin I}$ \cite{MR890924,CKS}.  We shall write $\mathsf{H}_{\{I\},\text{inv}}^*:=\IH^0 (\mathsf{H}^*_{\{I\}})_{\uo}$ for the $\{T_j\}_{j\notin I}$-invariants in $\mathsf{H}_{\{I\},\lm}^*$, and $\mathsf{H}_{c,\text{inv}}^* =\oplus_{|I|=c}\mathsf{H}^*_{\{I\},\text{inv}}$.  Obviously, \eqref{eq1.9.4} vanishes for $\ell>\max\{r-|I|-1,0\}$.

Second, the hard Lefschetz isomorphisms \eqref{eq1.2b} take the form $$\mathsf{H}^{-j+n+c}_c (-j) \overset{\cong}{\to} \mathsf{H}^{j+n+c}_c ,$$ so that $\mathsf{H}_c^*$ is centered about $*=n+c$. Since it is zero for $*>2n$, it is also zero for $*<2c$. Taking stock of these vanishings, \eqref{eq1.9.2} becomes 
\begin{equation}\label{eq1.9.5}
\IH^m(\cx)\cong \oplus_{c=0}^{\min(r,\lfloor\frac{m}{2}\rfloor)} \oplus_{\ell=0}^{\max(0,r-c-1)} \IH^{\ell}(\cs_c,\mathsf{H}^{m-\ell}_c).
\end{equation}

Now we introduce two filtrations:  the \emph{coniveau filtration} (by codimension of support) is just
\begin{equation} \label{eq1.9.6}
\mathscr{N}^{\alpha}\IH^m(\cx) :=\oplus_{c=\alpha}^{\min(r,\lfloor\frac{m}{2}\rfloor)} \oplus_{\ell=0}^{\max(0,r-c-1)} \IH^{\ell}(\cs_c,\mathsf{H}_c^{m-\ell}) ;	
\end{equation}
while the \emph{shifted perverse Leray filtration} $\mathscr{L}^{\alpha}:=\pl^{\alpha-r}$ is given (cf. \eqref{1.2d}-\eqref{1.2e}) by 
\begin{equation}\label{eq1.9.7}
\mathscr{L}^{\alpha}\IH^m(\cx):=\oplus_{c=0}^{\min(r,\lfloor\frac{m}{2}\rfloor)}\oplus_{\ell=\max(\alpha-c,0)}^{\max(r-c-1,0)} \IH^{\ell}(\cs_c,\mathsf{H}_c^{m-\ell}).	
\end{equation}
The following is essentially a special case of \cite{dCM3}:
\begin{prop}\label{propS9}
$\mathscr{L}^{\alpha}$ is the kernel of restriction to $f^{-1}(\mathcal{A})$, where $\mathcal{A} \subset \Delta^r$ is a general affine\footnote{More precisely, we mean the intersection of $\alpha-1$ hypersurfaces of the form $\mathsf{L}(\underline{s})=\mathsf{K}$, where $\mathsf{L}$ is a linear form and $\mathsf{K}$ a sufficiently small nonzero constant.} slice of codimension $\alpha-1$; and $\mathscr{N}^{\bullet}\subseteq \mathscr{L}^{\bullet}$.
\end{prop}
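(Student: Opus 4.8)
I would treat the two assertions separately. The inclusion $\mathscr{N}^{\bullet}\subseteq\mathscr{L}^{\bullet}$ is immediate from the explicit formulas: a summand $\IH^{\ell}(\cs_{c},\mathsf{H}^{m-\ell}_{c})$ occurring in $\mathscr{N}^{\alpha}$ of \eqref{eq1.9.6} has $c\geq\alpha$, hence $\ell\geq 0=\max(\alpha-c,0)$, so it also occurs in $\mathscr{L}^{\alpha}$ of \eqref{eq1.9.7}; therefore $\mathscr{N}^{\alpha}\subseteq\mathscr{L}^{\alpha}$ for every $\alpha$, and the rest of the proof concerns the first statement.

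For the first statement the plan is to recognize the shifted perverse Leray filtration $\mathscr{L}^{\alpha}=\pl^{\alpha-r}$ as the ``kernel of restriction'' filtration of de Cataldo--Migliorini \cite{dCM3}, specialized to the base $\Delta^{r}$; one should note that the slices $\{\mathsf{L}(\underline s)=\mathsf{K}\}$ of the footnote are exactly the affine (quasi-projective) replacement for a general flag of hyperplane sections. The first move is to pass to the base: since $f$ is projective, proper base change along a general affine slice $\mathcal{A}\hookrightarrow\Delta^{r}$ identifies $(Rf_{*}\IC_{\cx})|_{\mathcal{A}}$ with $R(f|_{f^{-1}(\mathcal{A})})_{*}\bigl(\IC_{\cx}|_{f^{-1}(\mathcal{A})}\bigr)$, so that restriction $\IH^{m}(\cx)\to\IH^{m}\bigl(f^{-1}(\mathcal{A})\bigr)$ is the map induced by $\IC_{\cx}\to R\imath_{*}\imath^{*}\IC_{\cx}$, $\imath\colon f^{-1}(\mathcal{A})\hookrightarrow\cx$. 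Since $\mathcal{A}$ is chosen transverse to the coordinate--hyperplane stratification and avoiding $\uo$, the generic restriction theorem for intersection complexes gives $\IC_{\cx}|_{f^{-1}(\mathcal{A})}\cong\IC_{f^{-1}(\mathcal{A})}$ up to the evident shift, and, applied to each summand of the decomposition \eqref{eq1.9.1} furnished by Theorem \ref{th1.2}, it shows that $\imath^{c}_{*}\jmath^{c}_{!*}\mathsf{H}^{j+n+c}_{c}[r-c]$ restricts to the corresponding $\IC$--complex on $\cs_{c}\cap\mathcal{A}$ (empty once $\mathcal{A}$ is too small to meet $\cs_{c}$). Hence the restriction map is diagonal with respect to the decomposition \eqref{eq1.9.2}/\eqref{eq1.9.5}.

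It then remains to analyze, summand by summand, a map of the shape $\IH^{\ell}(\cs_{c},\mathsf{H}^{m-\ell}_{c})\to\IH^{\ell}(\cs_{c}\cap\mathcal{A},\mathsf{H}^{m-\ell}_{c}|_{\cs_{c}\cap\mathcal{A}})$: the restriction of the local intersection cohomology of a polydisk $\cs_{\{I\}}\cong\Delta^{r-c}$ (which retracts to its center, hence is computed by the Koszul complex \eqref{eq1.9.4}) to that of a general affine slice of the polydisk avoiding the center. On such a slice the relevant intermediate extension $\jmath^{c}_{!*}\mathsf{H}^{m-\ell}_{c}[\dim(\cs_{c}\cap\mathcal{A})]$ is perverse, so Artin vanishing forces its hypercohomology to vanish above the dimension of the slice; comparing this bound with the codimension of $\mathcal{A}$, one finds that the target is zero --- so the summand lies in $\ker$ of restriction --- in precisely the range of pairs $(c,\ell)$ that is excised in passing from \eqref{eq1.9.5} to \eqref{eq1.9.7}, while on the complementary range the restriction is injective by the Lefschetz-type content of \cite{dCM3}. (In practice I would simply quote \cite{dCM3} for this last step rather than reprove it.) This identifies $\mathscr{L}^{\alpha}$ with $\ker\bigl\{\IH^{\bullet}(\cx)\to\IH^{\bullet}(f^{-1}(\mathcal{A}))\bigr\}$.

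I expect the main obstacle to be the bookkeeping of shifts and indices. One must reconcile the normalization $\mathscr{L}^{\alpha}=\pl^{\alpha-r}$ (through \eqref{1.2d}--\eqref{1.2e}) and the $\IC$--degree conventions used here with the flag conventions of \cite{dCM3}, so that ``kernel of restriction to a general affine slice'' comes out as the step $\mathscr{L}^{\alpha}$ with a slice of the stated (co)dimension; the degenerate cases $\mathcal{A}=\Delta^{r}$ and $\mathcal{A}=\emptyset$ are a useful check. One must also verify the genericity inputs: that the $\{\mathsf{L}=\mathsf{K}\}$ slices are ``general'' in the sense needed for \emph{iterated} generic restriction, are transverse to each $\cs_{\{I\}}$ and to $\uo$, and that the fundamental group of the complement of the (generic) traces of the coordinate hyperplanes on such a slice remains $\ZZ^{r-c}$ with the same monodromy operators, so that the Koszul description \eqref{eq1.9.4} persists on $\cs_{c}\cap\mathcal{A}$ and the summand-wise analysis goes through.
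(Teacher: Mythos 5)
Your overall strategy is sound and close in spirit to the paper's, but there is a bookkeeping error in the Artin-vanishing step that, as written, inverts the conclusion. You assert that the target $\IH^{\ell}(\cs_c\cap\mathcal{A},\mathsf{H}^{m-\ell}_c)$ vanishes ``in precisely the range of pairs $(c,\ell)$ that is excised in passing from \eqref{eq1.9.5} to \eqref{eq1.9.7},'' with injectivity on the complement; this would identify $\ker$ with the \emph{complement} of $\mathscr{L}^{\alpha}$. But $\mathscr{L}^{\alpha}$ in \eqref{eq1.9.7} consists of the summands \emph{retained}, namely $\ell\geq\max(\alpha-c,0)$, and those are precisely the ones that should map to zero. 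The indices work out if $\mathcal{A}$ has \emph{dimension} $\alpha-1$: then $\cs_c\cap\mathcal{A}$ has dimension $\alpha-c-1$ and Artin/Stein vanishing gives $\IH^{\ell}=0$ for $\ell>\alpha-c-1$, i.e. $\ell\geq\alpha-c$, which is the retained range. Be aware that the proposition's phrase ``codimension $\alpha-1$'' (and the footnote's ``$\alpha-1$ hypersurfaces'') is an error in the paper --- the proof's criterion ``empty iff $\ell\geq\alpha-c$'' and the later identification of $\mathscr{L}^1$ with $\ker(\sp)$ (restriction to a single nearby fiber) both require $\dim\mathcal{A}=\alpha-1$, i.e. $r-\alpha+1$ hypersurfaces. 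If you take ``codimension $\alpha-1$'' at face value, the Artin bound becomes $\ell\geq r-c-\alpha+2$ and matches neither range, which may be what confused you.

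With the excised/retained slip corrected, your route is a legitimate and slightly different one from the paper's. The paper works concretely with the \v Cech-Koszul double complex computing $\IH^{\bullet}(\cs_c\cap\mathcal{A},\mathsf{H}^{m-\ell}_c)$ and reads off when the degree-$\ell$ Koszul terms $N_{j_1}\cdots N_{j_{\ell}}\mathsf{H}^{m-\ell}_{c,\lm}(-\ell)$ can appear at all, namely iff $\cs_{c+\ell}\cap\mathcal{A}\neq\emptyset$; you replace this with proper base change, generic restriction of the $\IC$-summands to the slice, and Stein/Artin vanishing. Both produce the same numerical cutoff. Both arguments, yours and the paper's, also quietly use that on the non-vanishing range the restriction is \emph{injective} (the paper just asserts ``either injective or zero''; you punt to \cite{dCM3}), so you are on comparable footing there.
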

\begin{proof}
The restrictions 
\begin{equation} \label{eq1.9.8}
\IH^{\ell}(\cs_c,\mathsf{H}_c^{m-\ell})\to \IH^{\ell}(\cs_c \cap \mathcal{A}^{\alpha-1},\mathsf{H}^{m-\ell}_c)	
\end{equation}
are either injective or zero.  Here the target is computed by a \v Cech-Koszul double complex, which has the $N_{j_1}\cdots N_{j_{\ell}}\mathsf{H}_{c,\lm}^{m-\ell}(-\ell)$ terms required if and only if $\cs_{c+\ell}\cap \mathcal{A}^{\alpha-1}$ is nonempty. So \eqref{eq1.9.8} is zero $\iff$ $\ell\geq \alpha-c$ (as required).

The inclusion $\mathscr{N}^{\alpha} \subseteq \mathscr{L}^{\alpha}$ is now geometrically obvious, though it also follows directly from \eqref{eq1.9.6}-\eqref{eq1.9.7} by $c\geq \alpha \implies \alpha-c\geq 0 \implies \max(\alpha-c,0)\geq 0$.
\end{proof}
Taken together, these filtrations endow every term in the double sum \eqref{eq1.9.5} with geometric meaning:  from \eqref{eq1.9.6} and \eqref{eq1.9.7} we have $\gr_{\mathscr{L}}^{\alpha}=\oplus_{\ell=0}^{\alpha} \IH^{\ell}(\mathsf{H}^{m-\ell}_{\alpha-\ell})_{\uo}$ and $\gr_{\mathscr{N}}^{\beta}\IH^m(\cx)=\oplus_{\ell=0}^{\max(r-\beta-1,0)}\IH^{\ell}(\mathsf{H}^{m-\ell}_{\beta})_{\uo}$, whereupon
\begin{equation}\label{eq1.9.9}
\IH^{\ell}(\mathsf{H}_c^{m-\ell})_{\uo}=\gr_{\mathscr{L}}^{\ell+c}\gr_{\mathscr{N}}^c \IH^m(\cx).	
\end{equation}
Recalling that $\cx_1 = f^{-1}(\{s_1\cdots s_r=0\})$, we also obtain a generalization of (part of) the Clemens--Schmid sequence:
\begin{thm}\label{thmS9}
The sequence of MHS
\begin{equation}\label{eq1.9.10}
\IH_{\cx_1}^m(\cx)\longrightarrow \frac{\IH^m(\cx)}{\oplus_{\ell=1}^{r-1}\IH^{\ell}(\ch^{m-\ell})_{\uo}}\overset{\sp}{\longrightarrow}\IH^m_{\lm}(X_{\underline{s}})^{T_1,\ldots,T_r}\longrightarrow 0 \end{equation}
is exact.	
\end{thm}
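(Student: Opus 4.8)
The plan is to read off both exactness statements from the canonical splitting \eqref{eq1.9.5}, which---being \eqref{eq1.9.2}, hence \eqref{eq1.2c} ``on the level of MHS''---is a direct sum decomposition of MHS. Under it $\bigoplus_{\ell=1}^{r-1}\IH^{\ell}(\ch^{m-\ell})_{\uo}$ is precisely the sum of the $(c,\ell)$-summands with $c=0$ and $\ell\ge 1$, so it is a sub-MHS and the quotient in \eqref{eq1.9.10} is the MHS $\IH^0(\ch^m)_{\uo}\oplus\bigoplus_{c\ge 1,\ell}\IH^{\ell}(\cs_c,\mathsf{H}_c^{m-\ell})$; moreover, by the $\ell=c=0$ instance of \eqref{eq1.9.4} the target $\IH^m_{\lm}(X_{\underline{s}})^{T_1,\ldots,T_r}$ is canonically (as MHS) the remaining summand $\IH^0(\ch^m)_{\uo}$.

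First I would pin down $\sp$. Exactly as in the one-parameter discussion of $\S$\ref{S1.3} (where $\sp$ on special-fibre cohomology is the inclusion of the invariant ``$\VV_{(\lm)}$''-part and is zero on the phantom ``$W$''-part), $\sp$ here is restriction to a general fibre $X_{\underline{s}}$, $\underline{s}\in(\Delta^*)^r$, followed by the canonical identification $\IH^m(X_{\underline{s}})^{T_1,\ldots,T_r}=\IH^m_{\lm}(X_{\underline{s}})^{T_1,\ldots,T_r}$. Taking stalks of \eqref{eq1.9.1} at $\underline{s}$: the summand $\imath^c_*\jmath^c_{!*}(\mathsf{H}_c^{j+n+c}[r-c])$ has vanishing stalk for $c\ge 1$ and, for $c=0$, stalk equal to the VHS fibre placed in the single degree $-r$. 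Hence $\sp$ is exactly the projection of \eqref{eq1.9.5} onto $\IH^0(\ch^m)_{\uo}$; in particular $\sp$, and the map $\overline{\sp}$ it induces on the quotient, is surjective, with $\ker(\overline{\sp})=\bigoplus_{c\ge 1,\ell}\IH^{\ell}(\cs_c,\mathsf{H}_c^{m-\ell})$.

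Next I would compute the image of the ``forget supports'' map. Put $D:=\{s_1\cdots s_r=0\}$, so $\cx_1=f^{-1}(D)$, $\cs_1=D$, and $\cs_c\subseteq D$ for all $c\ge 1$; base change (for $f$ proper) gives $\IH_{\cx_1}^m(\cx)=\HH^{m-n-r}(\Delta^r,R\Gamma_D\, Rf_*\IC_{\cx})$, and applying $R\Gamma_D$ to \eqref{eq1.9.1} the map to $\IH^m(\cx)=\HH^{m-n-r}(\Delta^r,Rf_*\IC_{\cx})$ respects the direct sum. For $c\ge 1$ the summand $\imath^c_*\jmath^c_{!*}(\cdots)$ is already supported on $\cs_c\subseteq D$, so $R\Gamma_D$ acts as the identity and this part of $\IH_{\cx_1}^m(\cx)$ maps isomorphically onto $\bigoplus_{c\ge 1,\ell}\IH^{\ell}(\cs_c,\mathsf{H}_c^{m-\ell})$. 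For $c=0$ I would use the excision triangle $R\Gamma_D(\Delta^r,\mathcal{G})\to R\Gamma(\Delta^r,\mathcal{G})\to R\Gamma((\Delta^*)^r,\mathcal{G})\overset{+1}{\to}$ with $\mathcal{G}=\jmath^0_{!*}(\ch^m[r])$: here $\mathcal{H}^{-r}(\mathcal{G})=j^0_*\ch^m$ (ordinary sheaf pushforward), its stalk at $\uo$ is the joint invariants $(\ch^m)^{T_1,\ldots,T_r}=\IH^0(\ch^m)_{\uo}$, and it restricts \emph{isomorphically} to $H^0((\Delta^*)^r,\ch^m)$; hence the map $H^{-r}_D(\Delta^r,\mathcal{G})\to H^{-r}(\Delta^r,\mathcal{G})$ is zero. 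Therefore the $c=0$ part of $\IH_{\cx_1}^m(\cx)$ lands inside $\bigoplus_{\ell\ge 1}\IH^{\ell}(\ch^{m-\ell})_{\uo}$, missing the summand $\IH^0(\ch^m)_{\uo}$.

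Combining: the image of $\IH_{\cx_1}^m(\cx)\to\IH^m(\cx)$ is $\bigoplus_{c\ge 1,\ell}\IH^{\ell}(\cs_c,\mathsf{H}_c^{m-\ell})\oplus W'$ with $W'\subseteq\bigoplus_{\ell\ge 1}\IH^{\ell}(\ch^{m-\ell})_{\uo}$, so modulo $\bigoplus_{\ell=1}^{r-1}\IH^{\ell}(\ch^{m-\ell})_{\uo}$ the part $W'$ dies and the image is exactly $\ker(\overline{\sp})$; this together with the surjectivity of $\overline{\sp}$ gives exactness of \eqref{eq1.9.10}. All three maps are morphisms of MHS---the first is a morphism of MHM, the second a quotient by a sub-MHS, and $\sp$ lands in the sub-MHS of monodromy invariants just as in $\S$\ref{S1.3}. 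The step I expect to be the main obstacle is the first one: making precise the multi-parameter $\sp$ (equivalently, the MHS $\IH^m_{\lm}(X_{\underline{s}})$, built from iterated nearby cycles and \eqref{eq1.9.4}) and checking its compatibility with the decomposition \eqref{eq1.9.1}; the supporting fact used in the third step---that the bottom cohomology sheaf of an IC sheaf is the full sheaf of local monodromy invariants---is standard but is precisely what makes the invariant summand invisible to cohomology with support on $\cx_1$.
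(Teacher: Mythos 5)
Your argument is correct and follows essentially the same route as the paper: invoke the MHS-level decomposition \eqref{eq1.9.5}, identify $\sp$ with projection onto the $(c,\ell)=(0,0)$ summand $\IH^0(\ch^m)_{\uo}=(H_{\lm}^m)^{T_1,\ldots,T_r}$, and observe that the image of $\IH_{\cx_1}^m(\cx)\to\IH^m(\cx)$ contains all $c\ge 1$ summands while missing the $(0,0)$ one. The paper's (very terse) proof asserts outright that this image equals $\mathscr{N}^1\IH^m(\cx)$; you instead show only that it is sandwiched between $\mathscr{N}^1$ and $\mathscr{N}^1\oplus\bigoplus_{\ell\ge 1}\IH^{\ell}(\ch^{m-\ell})_{\uo}$ via an excision argument at $c=0$ — which is all the theorem actually needs and is arguably a cleaner justification than the paper supplies.
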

\begin{proof}
Actually more is true:  $\IH^m(\cx)$ is the \emph{direct sum} of $\mathscr{N}^1 \IH^m(\cx)=\text{image}\{\IH^m_{\cx_1}(\cx)\to \IH^m(\cx)\}$ and $\gr_{\mathscr{N}}^0 \IH^m(\cx) = \oplus_{\ell=0}^{r-1}\IH^{r-1}(\ch^{m-\ell})_{\uo}$, with $\IH^0(\ch^m)_{\uo}=H_{\text{inv}}^m=(H_{\lm}^m)^{T_1,\ldots,T_r}$.
\end{proof}
For the remainder of the section, we \emph{assume that $\cx$ is smooth}, so that \eqref{eq1.9.10} becomes\footnote{Alternatively one can move the denominator of the middle term to the right-hand term as a direct summand.}
\begin{equation}\label{eq1.9.11}
H_{\cx_1}^m(\cx)\overset{\imath_{X_{\uo}}^* \imath_*^{\cx_1}}{\longrightarrow} \frac{H^m(X_{\uo})}{\oplus_{\ell=1}^{r-1}\IH^{\ell}(\ch^{m-\ell})_{\uo}}\overset{\sp}{\longrightarrow}H^m_{\text{inv}}(X_{\underline{s}})\longrightarrow 0 .
\end{equation}
(Note that we are \emph{not} assuming unipotent monodromies.) It is instructive to write out the decomposition \eqref{eq1.9.5} in detail for small $r$:
\begin{itemize}[leftmargin=0.5cm]
\item ($r=1$) $H^m(X_0)\cong H^m_{\text{inv}} \oplus \underset{\mathscr{L}^1}{\underbrace{\mathsf{H}_1^m}}$
\item ($r=2$) $H^m(X_{\uo})\cong H^m_{\text{inv}} \oplus \underset{\mathscr{L}^1}{\underbrace{\IH^1(\ch^{m-1})_{\uo}\oplus \mathsf{H}_{1,\text{inv}}^m\oplus \overset{\mathscr{L}^2}{\overbrace{\mathsf{H}_2^m}}}}$
\item ($r=3$) $H^m(X_{\uo})\cong \\ H^m_{\text{inv}}\oplus\underset{\mathscr{L}^1}{\underbrace{\left\{\begin{array}{c} \IH^1 (\ch^{m-1})_{\uo}\\ \oplus \mathsf{H}_{1,\text{inv}}^m \end{array}\right\} \oplus \overset{\mathscr{L}^2}{\overbrace{\left\{ \begin{array}{c} \IH^2 (\ch^{m-2})_{\uo}\oplus \\ \IH^1(\mathsf{H}_1^{m-1})_{\uo} \oplus \mathsf{H}_{2,\text{inv}}^m  \end{array} \right\} \oplus \underset{\mathscr{L}^3}{\underbrace{\mathsf{H}_3^m}}}}}}$
\end{itemize}
in which $\mathsf{H}_c^* =0$ for $*\leq 2c-1$ (and $\mathsf{H}^*_1$ is the phantom cohomology in codimension 1).  By Prop. \ref{propS9}, $\mathscr{L}^1$ is the kernel of the restriction to a nearby fiber $X_{\underline{s}}$ (i.e. of $\sp$), $\mathscr{L}^2$ of the restriction to a nearby affine line (meeting all coordinate hyperplanes), and so on.

Finally, here are a few examples which illustrate the $r=2$ scenario (and which all happen to have unipotent monodromy):

\begin{example}\label{ex1.9a}
Let $\mathcal{C}\to \Delta^2$ be a family of curves with smooth total space.  Then $H^1(C_{\uo})=H^1_{\text{inv}}$ and $H^2(C_{\uo})=H^2_{\text{inv}}(\cong \QQ(-1))\oplus \IH^1_{\uo}(\ch^1)\oplus \mathsf{H}^2_{1,\text{inv}}$. The simplest example with $\IH^1$-term nonzero is when $\mathcal{C}$ is a family of elliptic curves with $I_1$-fibers on $\{0\}\times \Delta^* \cup \Delta^*\times\{0\}$ (with equal monodromies $N_1 = N_2$) and $I_2$-fiber at $\uo$ (cf. \cite{MR2796415}); then $\mathsf{H}^2_1 =0$ and $H^2(C_{\uo})\cong \QQ(-1)^{\oplus 2}$.  For instance, if we base-change a 1-variable $I_1$ degeneration by $(s_1,s_2)\mapsto s_1 s_2$, the nonvanishing of $\IH^1(\ch^1)$ simply indicates that without blowing up, we have a singular total space.
\end{example}

\begin{example}\label{ex1.9b}
Abramovich and Karu \cite{AK} defined a notion of semistable degenerations in 	more than one parameter; these are characterized by having (i) smooth total space (so that \eqref{eq1.9.11} applies) and (ii) local structure of a fiber product of SSDs along the coordinate hyperplanes.  (In particular, they have unipotent monodromies.)  An easy case is that of an ``exterior product'' of 1-variable SSDs:  for instance, let $\cE
\overset{g}{\to}\Delta$ be a semistable degeneration of elliptic curves with $I_k$ singular fiber, so that $\cx:=\cE\times\cE \overset{g\times g}{\longrightarrow}\Delta \times \Delta$ has fibers $E_{s_1}\times E_{s_2}$. (Locally this takes the form $s_1=xy,\,s_2=zw$.)  

Regardless of $k$, we have $\IH^1(\ch^*)=0$.  For $k=1$, the $\mathsf{H}_{\{i\}}$ and $\mathsf{H}_{\{12\}}^*$ all vanish, so that $H^m(X_{\uo})=H^m_{\text{inv}}$ in all degrees.  However, when $k=2$, we have $\mathsf{H}^*_{\{i\},\text{inv}}=H^{*-2}_{\lm}(E_s)(-1)$, and $\mathsf{H}^*_{\{12\}}=H^{*-4}(\text{pt.})(-2)$; the reader may check that $H^m_{\text{inv}}\oplus \mathsf{H}^m_{\{1\},\text{inv}}\oplus \mathsf{H}^m_{\{2\},\text{inv}}\oplus \mathsf{H}^m_{\{12\}}$ correctly computes $H^m(E_0\times E_0)$. 
\end{example}

\begin{example}\label{ex1.9c}
Mirror symmetry allows for the computation of (unipotent) monodromies $T_i = e^{N_i}$ of families of CY toric hypersurfaces $X_{\underline{s}}\subset \PP$ in the ``large complex structure limit''.  In particular, \cite[$\S$8.3]{KPR} and \cite{Grimmetal} study two distinct 2-parameter families of $h^{2,1}=2$ CY 3-folds over $(\Delta^*)^2$ with Hodge-Tate LMHS $H^3_{\lm}$ at the origin.  The notation $\langle \mathrm{IV}_1 \mid \mathrm{IV}_2 \mid \mathrm{III}_0 \rangle$ for the first family and $\langle \mathrm{III}_0 \mid \mathrm{IV}_2 \mid \mathrm{III}_0 \rangle$ for the second indicates the LMHS types corresponding to $N_1$ (on $\{0\}\times \Delta^*$), $N_1+N_2$ (at $\{\uo\}$), and $N_2$ (on $\Delta^* \times \{0\}$). These types are described by their Hodge-Deligne diagrams: \[\includegraphics[scale=0.7]{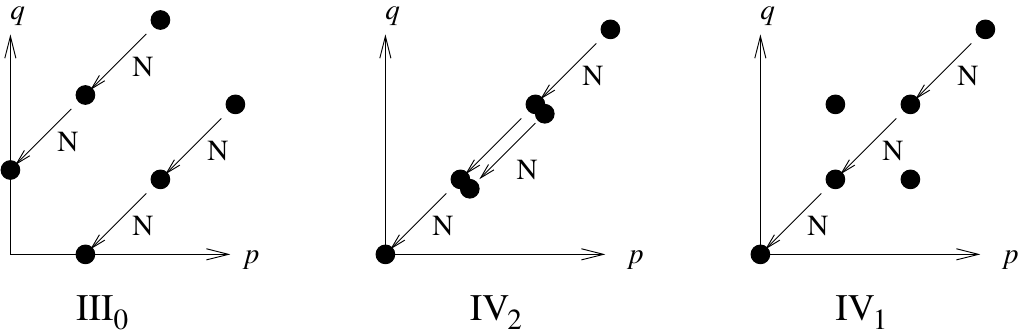}\]
Both variations have $H^3_{\text{inv}}\cong \QQ(0)$ and $H^4_{\text{inv}}\cong H^2(\PP)(-1)$ for their respective toric varieties, and of course $\IH^1 (\ch^*)_{\uo} =0$ for $*\neq 3$.  
Let us assume we have smooth compactifications of both families with all $\mathsf{H}^*_{1,\text{inv}}$ and $\mathsf{H}^*_2$ zero.  
Then $H^3(X_{\uo})=\QQ(0)$ and $H^4(X_{\uo})=H^2(\PP)(-1)\oplus \IH^1 (\ch^3)_{\uo}$ in both cases; so the key to the topology of $X_{\uo}$ in each case (provided we want a smooth total space) lies in the cohomology of the complex $H_{\lm}^3 \to N_1 H_{\lm}^3(-1) \oplus N_2 H_{\lm}^3(-1) \to N_1 N_2 H^3_{\lm}(-2)$.  From the LMHS types one immediately deduces that (writing ranks of maps over the arrows) this complex takes the form $\CC^6 \overset{5}{\to} \CC^3 \oplus \CC^4 \overset{2}{\to} \CC^2$ in the first case (so that $\IH^1(\ch^3)=0$), and $\CC^6 \overset{5}{\to}\CC^4 \oplus \CC^4 \overset{2}{\to} \CC^2$ in the second (so that $\IH^1(\ch^3)=\QQ(-2)$).
\end{example}

\newcommand{\etalchar}[1]{$^{#1}$}
\providecommand{\bysame}{\leavevmode\hbox to3em{\hrulefill}\thinspace}
\providecommand{\MR}{\relax\ifhmode\unskip\space\fi MR }
\providecommand{\MRhref}[2]{%
  \href{http://www.ams.org/mathscinet-getitem?mr=#1}{#2}
}
\providecommand{\href}[2]{#2}
\bigskip

\appendix

\pagestyle{myheadings}
\markboth{}{}

\section*{Appendix : Decomposition Theorem over Stein Curves}
\ms\centerline{Morihiko Saito}
\pagestyle{myheadings}
\markboth{APPENDIX : DECOMPOSITION THEOREM OVER STEIN CURVES}{MORIHIKO SAITO}
\ms\bsn
In this Appendix we prove the following (see Theorem A.4 below):
\msn
{\bf Theorem A.} {\it Let $f:X\to C$ be a proper surjective morphism of a connected complex manifold $X$ to a connected non-compact curve $C$. The decomposition theorem for $\Rb f_*\Qb_X$ is equivalent to the Clemens-Schmid exact sequence $($or the local invariant cycle theorem$)$ for every singular fiber of $f$.}
\ms
We also show that the weak decomposition always holds if $C$ is a connected non-compact smooth curve, see Corollary A.3 below.
Note that the last hypothesis implies that $C$ is {\it Stein{\hskip1pt}} by Behnke-Stein, see for instance \cite[Corollary 26.8]{F}.
We have the following.
\msn
{\bf Corollary~A.} {\it For $f:X\to C$ as above, the decomposition theorem holds for $\Rb f_*\Qb_X$, if there is an embedded resolution $\rho:\Xt\to X$ such that the inverse image $\Xt_c:=\rho^{-1}(X_c)$ of any singular fiber $X_c:=f^{-1}(c)$ is a divisor with simple normal crossings $($not necessarily reduced\,$)$ and there is a cohomology class $\eta_c\in H^2(\Xt_c,\Cb)$ whose restriction to any irreducible component of $\Xt_c$ is a K\"ahler class.}
\ms
Note that the local invariant cycle theorem holds under the above hypotheses as is well-known, see also Remark~A.4\,(ii) below.
\sk
This work is partially supported by JSPS Kakenhi 15K04816.
\msn
{\bf A.1.~$t$-structure on complex manifolds} (see \cite{BBD}).
Let $X$ be a complex manifold, and $A$ be any subfield of $\Cb$. Let $D^b_c(X,A)$ be the bounded derived category of $A$-complexes with constructible cohomology sheaves. For $k\in\Zb$, we have the full subcategories
$$D^b_c(X,A)^{\les k}\,\subset\,D^b_c(X,A),$$
defined by the following condition for $\Ks\in D^b_c(X,A):$
$$\dim{\rm Supp}\,\Hc^j\Ks\les k-j\q\q(\forall\,j\in\Zb).
\leqno{\rm(A.1.1)}$$
Put
$$\aligned D^b_c(X,A)^{\ges k}&:=\Db^{-1}\bl(D^b_c(X,A)^{\les-k}\br)\\&\,\,\bl(=\bl\{\Ks\in D^b_c(X,A)\mid\Db \Ks\in D^b_c(X,A)^{\les-k}\br\}\br),\\
D^b_c(X,A)^{[k]}&:=D^b_c(X,A)^{\les k}\,\cap\,D^b_c(X,A)^{\ges k}.\endaligned
\leqno{\rm(A.1.2)}$$
Here, taking an injective resolution $A_X\simto\Ic^{\ssb}$, we can define $\Db \Ks$ by
$$\Db \Ks=\tau_{\les k}\Hc om_A(\Ks,\Ic^{\ssb}(\dim X)[2\dim X])\q(k\gg 0),
\leqno{\rm(A.1.3)}$$
where $\tau_{\les k}$ is a classical truncation.
\sk
By \cite{BBD}, the $D^b_c(X,A)^{[k]}$ are {\it abelian{\hskip1pt}} full subcategories of $D^b_c(X,A)$, and there are truncation functors
$${}^p\tau_{\les k}:D^b_c(X,A)\to D^b_c(X,A)^{\les k},$$
(similarly for ${}^p\tau_{\ges k}$) together with the cohomological functors
$$^p\Hc^k:D^b_c(X,A)\to D^b_c(X,A)^{[0]},$$
and also the distinguished triangles for $\Ks\in D^b_c(X,A):$
$${}^p\tau_{\les k-1}\Ks\to{}^p\tau_{\les k}\Ks\to({}^p\Hc^k\Ks)[-k]\buildrel{+1}\over\to.
\leqno{\rm(A.1.4)}$$
\msn
{\bf A.2.~Curve case.} Assume $\dim X=1$, that is, $X$ is a smooth curve $C$. It is well-known that $D^b_c(C,A)^{[k]}\subset D^b_c(C,A)$ is defined by the following conditions:
$$\aligned\dim{\rm Supp}\,\Hc^k\Ks&=0,\\\Hc_{\{c\}}^0\Hc^{k-1}\Ks&=0\q(\forall\,c\in C),\\ \Hc^j\Ks&=0\q(\forall\,j\notin\{k, k-1\}).\endaligned
\leqno{\rm(A.2.1)}$$
This can be shown using the functor $i_c^!$ for $i_c:\{c\}\into C$ together with duality.
\sk
The following proposition and lemma are also well-known:
\msn
{\bf Proposition~A.2.} {\it For any $\Ks\in D^b_c(C,A)^{[k]}$, there is a unique finite increasing filtration $G$ on $\Ks$ satisfying
$$\aligned\Gr^G_0\Ks&=(j_*L)[1{-}k],\\ \dim{\rm Supp}\,\Gr^G_i\Ks&=0\q\h{if}\,\,\,\,|i|=1,\\ \Gr^G_i\Ks&=0\q\h{if}\,\,\,\,|i|>1.\endaligned
\leqno{\rm(A.2.2)}$$
where $L$ is an $A$-local system on a Zariski-open subset $C'\buildrel{j'}\over\into C$ which is obtained by restricting $\Hc^{k-1}\Ks$ to $C'$ $($note that $|C\setminus C'|$ may be infinite$)$. Moreover $\Gr^G_{-1}\Ks$ $($resp. $\Gr^G_1\Ks)$ is the maximal subobject $($resp. quotient object$)$ of $\Ks$ supported on a discrete subset of $C$.}
\msn
{\it Proof.} Set
$$G_1\Ks:=\Ks,\q G_0\Ks:=\tau_{\les k-1}\Ks,$$
where $\tau_{\les k-1}$ is the truncation in the classical sense. By (A.2.1) we have the canonical isomorphisms
$$G_0\Ks=(\Hc^{k-1}\Ks)[1{-}k],\q\Gr^G_1\Ks=(\Hc^k\Ks)[-k],
\leqno{\rm(A.2.3)}$$
together with the short exact sequence of sheaves
$$0\to\Hc^{k-1}\Ks\buildrel{\iota}\over\to j_*L\to{\rm Coker}\,\iota\to 0,
\leqno{\rm(A.2.4)}$$
inducing a short exact sequence in $D^b_c(C,A)^{[k]}:$
$$0\to({\rm Coker}\,\iota)[-k]\to(\Hc^{k-1}\Ks)[1{-}k]\to(j_*L)[1{-}k]\to 0.
\leqno{\rm(A.2.5)}$$
So we get (A.2.2), setting $G_{-1}\Ks:=({\rm Coker}\,\iota)[-k]$, $G_{-2}\Ks:=0$. The last assertion follows from Lemma~A.2 below.
This finishes the proof of Proposition~A.2.
\msn
{\bf Lemma~A.2.} {\it In the notation of Proposition~{\rm A.2}, the shifted direct image sheaf $(j_*L)[1]$ is canonically isomorphic to the intermediate direct image $j_{!*}(L[1])$ in $D^b_c(C,A)^{[0]}$ $($see \cite{BBD}$)$.}
\msn
{\it Proof.} We have the following short exact sequences in $D^b_c(C,A)^{[0]}$\,:
$$\aligned&0\to({\rm Coker}\,\iota')\to(j_!L)[1]\to(j_*L)[1]\to 0,\\ &0\to(j_*L)[1]\to(\Rb j_*L)[1]\to R^1j_*L\to 0,
\endaligned
\leqno{\rm(A.2.6)}$$
where $\iota':j_!L\into j_*L$ is a canonical inclusion.
(These two short exact sequences are dual of each other if $L$ in the second sequence is replaced by its dual.)
Lemma~A.2 then follows.
\ms
Lemma~A.2 and Proposition~A.2 imply the following.
\msn
{\bf Corollary~A.2.} {\it Any simple object of $D^b_c(C,A)^{[k]}$ is either $A_c[-k]$ with $A_c$ a sheaf supported at a point $c\in C$ or $(j_*L)[1{-}k]$ with $L$ a simple $A$-local system on a Zariski-open subset $C'\buildrel{j}\over\into C$.}
\ms
(Note, however, that the intermediate direct image $j_{!*}$ and the direct image $j_*$ are {\it not{\hskip1pt}} exact functors.)
\msn
{\bf Remark~A.2.} The intermediated direct image $j_{!*}(L[1])$ is also written as ${\rm IC}_CL$, and is called the {\it intersection complex{\hskip1pt}} (with local system coefficients).
\msn
{\bf A.3.~Vanishing of higher extension groups.} In the case of non-compact curves, we have the following.
\msn
{\bf Proposition~A.3.} {\it If $C$ is a connected non-compact smooth curve, and $\Ks,\Kps\in D^b_c(C,A)^{[k]}$, we have}
$${\rm Ext}_{D^b(C,A)}^i(\Ks,\Kps)=0\q(i\ges 2).
\leqno{\rm(A.3.1)}$$
\msn
{\it Proof.} Since the assertion is independent of $k\in\Zb$, we may assume $k=1$. Set
$$E^{\ssb}:=\Rb\Hc om_A(\Ks,\Kps)\in D^b_c(C,A).$$
We first reduce the assertion (A.3.1) to
$$E^{\ssb}\in D^b_c(C,A)^{\les 1},
\leqno{\rm(A.3.2)}$$
using the well-known isomorphism
$$\aligned{\rm Ext}_{D^b(C,A)}^i(\Ks,\Kps)&=H^i\bl(C,\Rb\Hc om_A(\Ks,\Kps)\br)\\&=H^i(C,E^{\ssb}),\endaligned
\leqno{\rm(A.3.3)}$$
together with the Riemann-Hilbert correspondence and also Cartan's Theorem~B.
(Here some finiteness condition would be needed if we use the duality for the direct images of objects of $D^b_c(C,A)$ by the morphism $C\to{\rm pt}$.)
\sk
By scalar extension $A\into\Cb$ we may assume $A=\Cb$. Let $M$ be the regular holonomic left $\Dc_C$-module $M$ corresponding to $^p\Hc^jE^{\ssb}$ ($j\les 1$). It has a global good filtration $F$, and we have the following quasi-isomorphism for $k\gg 0$\,:
$$C\bl({\rm d}:F_kM\to F_{k+1}M\otimes_{\Oc_C}\Omega_C^1\br)\simto{\rm DR}(M)={}^p\Hc^jE^{\ssb}.$$
Recall that any connected {\it non-compact{\hskip1pt}} smooth curve is {\it Stein{\hskip1pt}} as a consequence of the theory of Behnke-Stein, see for instance \cite[Corollary 26.8]{F}. We then get by Cartan's Theorem~B
$$H^i(C,{}^p\Hc^jE^{\ssb})=0\q(i>0).$$
Here one problem is that it is not quite clear whether $k$ exists {\it globally,} since $C$ is non-compact. For this we can use Proposition~A.2 so that the assertion is reduced to the intersection complex case. Then the Deligne extension \cite{D2} gives the filtration $F$ with the above $k$ rather explicitly.
(Note that Cartan's Theorem~B does not necessarily hold for quasi-coherent sheaves, see for instance \cite[Remark~2.3.8\,(2)]{mhp}.)
The assertion (A.3.1) is thus reduced to (A.3.2).
\sk
Let $C'\subset C$ be a Zariski-open subset such that $\Ks|_{C'}$, $\Kps|_{C'}$ are local systems. Since the assertion (A.3.2) is local, we may assume that $(C,C')=(\De,\De^*)$ so that $E^{\ssb}|_{\De^*}$ is a local system. The assertion is then reduced to that
$$(\Hc^jE^{\ssb})_0={\rm Ext}^j_{D^b(\De,\Cb)}(\Ks,\Kps)=0\q\q(j\ges 2).
\leqno{\rm(A.3.4)}$$
\sk
Using the Riemann-Hilbert correspondence, the latter assertion is equivalent to that
$${\rm Ext}^j_{\Dc_{\De,0}}(M,M')=0\q\q(j\ges 2),
\leqno{\rm(A.3.5)}$$
for any regular holonomic $\Dc_{\De,0}$-modules $M,M'$.
This is further reduced to the case where $M,M'$ are simple regular holonomic $\Dc_{\De,0}$-modules (using the standard long exact sequences of extension groups). So we may assume that $M,M'$ are of the form $\Dc_{\De,0}/\Dc_{\De,0}P$ with
$$P\,=\,\dd_t,\,\,\,t,\,\,\,t\dd_t-\alpha\,\,\,(\alpha\in\Cb\setminus\Zb),
\leqno{\rm(A.3.6)}$$
where $t$ is a coordinate of $\De$.
This implies a free resolution
$$0\to\Dc_{\De,0}\buildrel{\cdot\,P}\over\longrightarrow\Dc_{\De,0}\to M\to 0,$$
and shows (A.3.5). (Here it is also possible to use the isomorphism $\Rb\Hc om_A(\Ks,\Kps)=\delta^!(\Db\Ks\boxtimes\Kps)$ with $\delta:X\into X\times X$ the diagonal, although the argument is more complicated.)
This finishes the proof of Proposition~A.3.
\ms
From Proposition~A.3 we can deduce the following.
\msn
{\bf Theorem~A.3} (Weak Decomposition theorem). {\it Let $C$ be a connected non-compact smooth curve. For any $\Ks\in D^b_c(C,A)$, we have a non-canonical isomorphism}
$$\Ks\cong\mopl_{j\in\Zb}\,({}^p\Hc^j\Ks)[-j]\q\h{in}\,\,\,\,D^b_c(C,A),
\leqno{\rm(A.3.7)}$$
\msn
{\it Proof.} This follows from Proposition~A.3 by using the distinguished triangles in (A.1.4) by induction on $k$.
\msn
{\bf Corollary~A.3.} {\it Let $f:X\to C$ be a proper morphism of complex manifolds with $C$ a connected non-compact smooth curve. Let $X'\subset X$ be a Zariski-open subset. Set $f':=f|_{X'}$, $\,^p\!R^jf'_*:={}^p\Hc^j\Rb f'_*$, and $d_X:=\dim X$.
Then we have a non-canonical isomorphism}
$$\Rb f'_*(A_{X'}[d_X])\cong\mopl_{j\in\Zb}\,\bl({}^p\!R^jf'_*(A_{X'}[d_X])\br)[-j]\q\h{in}\,\,\,\,D^b_c(C,A).
\leqno{\rm(A.3.8)}$$
\msn
{\bf Remarks~A.3.} (i) It is quite unclear whether $\Rb f'_*A_{X'}$ belongs to $D^b_c(C,A)$ unless we assume that $f':X'\to C$ can be extended to a proper morphism  of complex manifolds $f:X\to C$ with $X\setminus X'$ a closed analytic subset of $X$.
\ms
(ii) If $f:X\to Y$ is a {\it smooth{\hskip1pt}} projective morphism of complex manifolds, we have the {\it weak decomposition} (see \cite{D1}):
$$\Rb f_*(A_X[d_X])\cong\mopl_{j\in\Zb}\,\bl({}^p\!R^jf_*(A_X[d_X])\br)[-j]\q\h{in}\,\,\,\,D^b_c(Y,A),
\leqno{\rm(A.3.9)}$$
using the Leray spectral sequence together with the hard Lefschetz property
$$\ell^j:{}^p\!R^{-j}f_*(A_X[d_X])\simto{}^p\!R^jf_*(A_X[d_X])(j)\q(j>0),
\leqno{\rm(A.3.10)}$$
since ${}^p\!R^jf_*(A_X[d_X])=(R^{d_X-d_Y}f_*A_X)[d_Y]$ in the $f$ smooth case.
\ms
(iii) In the $f$ non-smooth case, we need a ``spectral object'' in the sense of Verdier \cite{Ve} in order to extend the above argument, see also \cite[Lemma~5.2.8]{mhp}. Note that the proof of the decomposition theorem in \cite[Theorem 6.2.5]{BBD} is {\it completely different{\hskip1pt}} from this. It uses mod $p$ reduction, and the coefficients are $\Cb$, not $\Qb$.
\ms
(iv) The decomposition theorem in the derived category of mixed Hodge modules was {\it not{\hskip1pt}} proved in \cite{mhp} (it follows from \cite[(4.5.4)]{mhm}).
We can deduce from \cite{mhp} only the decompositions of the underlying $\Qb$-complex and the underlying complex of filtered $\Dc$-modules together with some compatibility between the decomposition isomorphisms, using Deligne's argument on the ``uniqueness'' in \cite{D4}. We have to use \cite[Lemma 5.2.8 and Proposition~2.1.12]{mhp} to apply Deligne's argument respectively to the $\Qb$-complex and the complex of filtered $\Dc$-modules (by passing from the derived category of filtered $\Dc$-modules to that of the abelian category of graded ${\mathcal B}$-modules), see also \cite[2.4--5]{toh}.
\msn
{\bf A.4.~Clemens-Schmid sequence.} For $\Ks\in D^b_c(\De,A)$, we have the following diagram for the {\it octahedral axiom{\hskip1pt}} of derived categories (see also \cite[Remark 5.2.2]{mhp})\,:
$$\begin{array}{cccclcccccl}
i_0^*\Ks&&\!\!\!\!\longleftarrow&&\!\!\!\!i_0^!\Ks&&\!\!\!\!i_0^*\Ks&&\!\!\!\!\longleftarrow&&\!\!\!\!i_0^!\Ks\\
&\!\!\!\!\!\!\!\!\nwarrow&\!\!\!\!{\scriptstyle c}&\!\!\!\!\!\!\!\!\swarrow&&&&\!\!\!\!\!\!\!\!\searrow&\!\!\!\!{\scriptstyle d}&\!\!\!\!\nearrow\!\!\!{\scriptstyle+1}\\
\,\,\,\downarrow\!\!{\scriptstyle+1}&\!\!\!\!\!\!\!{\scriptstyle d}\,\,\,&\!\!\!\!{}^p\varphi_1\Ks&\!\!\!\!{\scriptstyle d}&\uparrow\!\!{\scriptstyle+1}&&
\,\,\,\downarrow\!\!{\scriptstyle+1}&\!\!\!\!\!\!\!{\scriptstyle c}\,\,\,&\!\!\!\!C(N)&\!\!\!\!{\scriptstyle c}&\uparrow\!\!{\scriptstyle+1}\\
&\!\!\!\!\!\!\!\!\raise1mm\h{$\scriptstyle\rm can$}\!\!\!\!\nearrow\,\,&\!\!\!\!{\scriptstyle c}&\!\!\!\!\!\!\!\!\searrow\!\!\!\!\!\raise1mm\h{$\scriptstyle\rm Var$}&&&&\!\!\!\!\swarrow\!\!\!{\scriptstyle+1}&\!\!\!\!{\scriptstyle d}&\!\!\!\!\!\!\!\!\nwarrow\\
{}^p\psi_1\Ks&&\!\!\!\!\buildrel{N}\over\longrightarrow&&\!\!\!\!\!\!\!\!\!\!{}^p\psi_1\Ks(-1)&&\!\!\!\!{}^p\psi_1\Ks&&\!\!\!\!\buildrel{N}\over\longrightarrow&&\!\!\!\!\!\!\!\!\!\!{}^p\psi_1\Ks(-1)\\
\end{array}$$
Here $c$ and $d$ mean respectively commutative and distinguished, and $i_0:\{0\}\into\De$ is the inclusion. We denote respectively by ${}^p\psi_1$, $^p\varphi_1$ the {\it unipotent{\hskip1pt}} monodromy part of the shifted nearby and vanishing cycle functors ${}^p\psi:=\psi[-1]$, ${}^p\varphi:=\varphi[-1]$ for the coordinate $t$ of $\De$.
\sk
In the above diagram, the following two distinguished triangles are respectively called the {\it vanishing cycle triangle} (see \cite{D3}) and the {\it dual vanishing cycle triangle}\,:
$$\aligned{}^p\psi_1\Ks&\buildrel{\rm can\,\,}\over\longrightarrow{}^p\varphi_1\Ks\to i_0^*\Ks\buildrel{+1}\over\to,\\ i_0^!\Ks&\to{}^p\varphi_1\Ks\buildrel{\rm Var\,\,}\over\longrightarrow{}^p\psi_1\Ks(-1)\buildrel{+1}\over\to.\endaligned
\leqno{\rm(A.4.1)}$$
These are dual of each other if $\Ks$ in the second triangle is replaced by $\Db\Ks$, see for instance \cite[Lemma~5.2.4]{mhp}.
\sk
The {\it Clemens-Schmid sequence{\hskip1pt}} is associated to the outermost part of the above diagram as follows:
$$\aligned&\to H^{j-1}i_0^!\Ks\to H^{j-1}i_0^*\Ks\to H^j{\hskip1pt}^p\psi_1\Ks\buildrel{N}\over\to H^j{\hskip1pt}^p\psi_1\Ks(-1)\\&\to H^{j+1}i_0^!\Ks\to H^{j+1}i_0^*\Ks\to\cdots\endaligned.
\leqno{\rm(A.4.2)}$$
There are two sequences depending on the parity of $j\in\Zb$, see also \cite{Cl}.
Note that this sequence is essentially {\it self-dual,} more precisely, its dual sequence is isomorphic to the Clemens-Schmid sequence for $\Db\Ks$. This follows from the duality between the two distinguished triangles in (A.4.1).
\sk
For $\Ks\in D^b_c(\De,A)$, we say that the {\it Clemens-Schmid exact sequence holds{\hskip1pt}} if the above two sequences are {\it exact{\hskip1pt}} at every term.
\sk
We say that the  {\it local invariant cycle property holds{\hskip1pt}} if the above sequence is exact at the third term, that is, if we have the exactness of
$$H^ji_0^*\Ks\to H^j\psi_1\Ks\buildrel{N}\over\to H^j\psi_1\Ks(-1)\q(\forall\,j\in\Zb).
\leqno{\rm(A.4.3)}$$
\sk
We say that the {\it strong decomposition holds{\hskip1pt}} for $\Ks\in D^b_c(C,A)$ with $C$ a smooth curve if there is a non-canonical isomorphism
$$\Ks\cong\mopl_{j\in\Zb}\,{\rm IC}_CL^j[-j]\,\oplus\,\mopl_{c\in C,j\in\Zb}\, E_c^j[-j]\q\h{in}\,\,\,D^b_c(C,A),
\leqno{\rm(A.4.4)}$$
where the $L^j$ are local systems defined on a Zariski-open subset of $C$, and the $E_c^j$ are sheaves (in the classical sense) supported at $c\in C$.
\sk
If the weak decomposition holds (that is, if the isomorphism (A.3.7) holds), then the strong decomposition is equivalent to the following {\it canonical{\hskip1pt}} isomorphisms called the {\it cohomological decompositions}\,:
$${}^p\Hc^j\Ks={\rm IC}_CL^j\,\oplus\,\mopl_{c\in C}\, E_c^j\q\h{in}\,\,\,D^b_c(C,A)^{[0]}\q(\forall\,j\in\Zb).
\leqno{\rm(A.4.5)}$$
These isomorphisms are {\it canonical{\hskip1pt}} by {\it strict support decomposition} (see \cite[5.1.3]{mhp}), and (A.4.5) is equivalent to the following direct sum decompositions at every $c\in C$\,:
$${\rm Im}\,{\rm can}\,\oplus\,{\rm Ker}\,{\rm Var}={}^p\varphi_{t,1}{}^p\Hc^j\Ks\q(\forall\,j\in\Zb),
\leqno{\rm(A.4.6)}$$
where $t$ is a local coordinate of $(C,c)$.
\sk
By Theorem~A.3, the weak decomposition holds for any complex $\Ks\in D^b_c(C,A)$ if $C$ is connected and non-compact.
\sk
We have the following.
\msn
{\bf Theorem A.4.} {\it Let $C$ be a connected non-compact smooth curve. Let $\Ks\in D^b_c(C,A)$ with a self-duality isomorphism $\Db\Ks\cong\Ks[m]$ for some $m\in\Zb$. Then the following three conditions are equivalent to each other\,$:$
\skn
{\rm (a)} The strong decomposition holds.
\skn
{\rm (b)} The Clemens-Schmid exact sequence holds at any $c\in C$.
\skn
{\rm (c)} The local invariant cycle property holds at any $c\in C$.}
\msn
{\it Proof.} We first prove (a) $\Rightarrow$ (b). Restricting to each direct factor of $\Ks$, it is enough to consider the case $\Ks=j_*L$ with $L$ a local system, where the self-duality assumption $\Db\Ks\cong\Ks[m]$ is forgotten for the moment. (Indeed, in the case $\Ks=E_c$, we can use the functorial isomorphisms $i_c^!\ssc(i_c)_*=i_c^*\ssc(i_c)_*={\rm id}$ with $i_c:\{c\}\into C$ the inclusion.) We have to show the exact sequence
$$0\to i_0^*j_*L\to \psi_1L\buildrel{N}\over\to\psi_1L(-1)\to H^2i_0^!j_*L\to 0,
\leqno{\rm(A.4.7)}$$
where $\psi_1j_*L$ is denoted by $\psi_1L$.
We have the exactness at the second term by definition. This implies the exactness at the third term, if we remember the self-duality condition $\Db\Ks\cong\Ks[m]$ (which implies that $\Db(j_*L)\cong(j_*L')[2]$ for some direct factor $(j_*L')[m']\subset\Ks$) together with the self-duality of the Clemens-Schmid sequence explained after (A.4.2). So the implication (a) $\Rightarrow$ (b) follows.
\sk
Assume now condition (c) (since the implication (b) $\Rightarrow$ (c) is trivial). Let $\De\subset C$ be a sufficiently small neighborhood of $c$ with coordinate $t$. Condition~(c) implies that
$${\rm Ker}\,{\rm can}={\rm Ker}\,N\q\h{in}\,\,\,{}^p\psi_{t,1}{}^p\Hc^j\Ks\q(j\in\Zb),$$
using the long exact sequence associated to the vanishing cycle triangle in (A.4.1), since
$${\rm Var}\ssc{\rm can}=N\q\h{on}\q{}^p\psi_{t,1}{}^p\Hc^j\Ks.$$
Considering the coimages and images of can and $N$, it induces the isomorphism
$${\rm Var}:{\rm Im}\,{\rm can}\simto{\rm Im}\,N,\q\h{hence}$$
$${\rm Im}\,{\rm can}\cap{\rm Ker}\,{\rm Var}=0.
\leqno{\rm(A.4.8)}$$
We then get the direct sum decomposition (A.4.6) using the self-duality isomorphism
$$\Db\,{}^p\Hc^j\Ks={}^p\Hc^{-j}\Db\Ks\cong{}^p\Hc^{m-j}\Ks\q(j\in\Zb),$$
since can and Var are dual of each other (up to a sign) as is explained after (A.4.1), see also Remark~A.4\,(i) below. So condition~(a) follows.
This finishes the proof of Theorem~A.4.
\msn
{\bf Remarks~A.4.} (i) It is well-known that any indecomposable regular holonomic $\Dc_0$-module $M$ (with $\Dc_0:=\Dc_{\De,0}$) is isomorphic to one of the following:
$$\begin{array}{clll}
\h{(A)}&\Dc_0/\Dc_0(\dd_tt)^i&(K=\Rb j_*j^*K)&i\ges 1,\\
\h{(B)}&\Dc_0/\Dc_0(t\dd_t)^i&(K=j_!j^*K)&i\ges 1,\\
\h{(C)}&\Dc_0/\Dc_0(\dd_tt)^{i-1}\dd_t&(K=j_*j^*K)&i\ges 1,\\
\h{(D)}&\Dc_0/\Dc_0t(\dd_tt)^i&&i\ges 0,\\
\h{(E)}&\Dc_0/\Dc_0(\dd_tt{-}\alpha)^i&(K=j_!j^*K=\Rb j_*j^*K)&i\ges 1,\\\end{array}$$
where $\alpha\in\Cb\setminus\Zb$, $\,K:={\rm DR}(M)[-1]\in D^b_c(C,A)^{[1]}$, and $i$ is the rank of the local system $K|_{\De^*}$. Note that their duals are respectively (B), (A), (C), (D), (E) (with $\alpha$ changed).
We can prove the assertion, for instance, calculating the extension groups of simple regular holonomic $\Dc_0$-modules in (A.3.6).
The local invariant cycle property implies that indecomposable $\Dc_0$-modules of type (A), (C), (D) (with $i=0$), (E) are allowed, and the self-duality excludes the type (A). Here we use the short exact sequences
$$0\to\Dc_0/\Dc_0P\buildrel{\cdot\,Q\,\,}\over\longrightarrow\Dc_0/\Dc_0PQ\to\Dc_0/\Dc_0Q\to0.$$
This implies that $\Oc_{\De,0}$ is a $\Dc_0$-submodule of an indecomposable regular holonomic $\Dc_0$-module $M$ only for type (A), (C). (This classification argument is used in a detailed version of \cite{HF2}.)
\ms
(ii) The local invariant cycle theorem holds for a proper morphism of complex manifolds $f:X\to\De$ if there is an embedded resolution $\rho:(\Xt,\Xt_0)\to(X,X_0)$ such that $\Xt_0$ is a divisor with simple normal crossings (not necessarily reduced) and there is a cohomology class $\eta_c\in H^2(\Xt_0,\Cb)$ whose restriction to any irreducible component of $\Xt_c$ is represented by a K\"ahler form. This follows for instance from the arguments in \cite[4.2.2 and 4.2.4]{mhp} (see also arXiv:math/0006162). It is known that the argument in \cite{St} is insufficient, see for instance \cite{GN} where the singular fiber is assumed reduced. (It does not seem very clear whether one can prove the semi-stable reduction theorem in the analytic case using the same argument as in the algebraic case.)
\ms
(iii) The reduction of the decomposition theorem using a base change is trivial if $X$ is {\it smooth}. Indeed, assume there is a commutative diagram
$$\begin{array}{ccc}X&\buildrel{\,\pi}\over\longleftarrow&Y\\ \,\,\,\downarrow\!{\scriptstyle f}&&\,\,\,\downarrow\!{\scriptstyle g}\\ C&\buildrel{\,\,\pi'}\over\longleftarrow&D
\end{array}$$
where $X,Y$ are connected complex manifolds, $C,D$ are curves, $f,g$ are proper surjective morphisms, $\pi'$ is a finite morphism, and $\pi$ is a proper and generically finite \'etale morphism. Then the canonical morphism $A_{X}\to\Rb\pi_*A_{Y}$ {\it splits{\hskip1pt}} by composing it with its dual, using the self-duality of $A_X[d_X]$, $A_Y[d_Y]$ together with ${\rm Hom}_{D^b(X,A)}(A_X,A_X)=A$. Moreover, it is known that intersection complexes with local system coefficients are {\it stable{\hskip1pt}} under the direct images by {\it finite{\hskip1pt}} morphisms, see \cite{BBD}. So the decomposition theorem for $g$ implies that for $f$. (Here $C$ can be singular.)

\bigskip

\bigskip

\pagestyle{myheadings}
\markboth{}{}

\end{document}